\long\def\symbolfootnote[#1]#2{\begingroup%
\def\thefootnote{\fnsymbol{footnote}}\footnote[#1]{#2}\endgroup}
\newtheorem{theorem}{Theorem}[section]
\newtheorem*{NoNumberTheorem}{Theorem}
\newtheorem{proposition}[theorem]{Proposition}
\newtheorem{corollary}[theorem]{Corollary}
\newtheorem{lemma}[theorem]{Lemma}
\newtheorem*{convention}{Convention}
\theoremstyle{definition}
\newtheorem{remark}[theorem]{Remark}
\newtheorem{definition}[theorem]{Definition}
\newtheorem{example}[theorem]{Example}
\def\F{\mathcal{F}}
\def\Dn{\mathbf{D}_{n}}
\def\pa{\psi}
\def\Tor{\mathrm{Tor}}
\def\Id{\mathrm{Id}}
\def\Q{\mathbb{Q}}
\def\Z{\mathbb{Z}}
\def\R{\mathbb{R}}
\def\N{\mathbb{N}}
\def\ttau{\tilde{\tau}}
\def\fib{\Sigma\longrightarrow M_{\pa}\longrightarrow S^{1}}
\def\fibb{\Sigma\longrightarrow M\longrightarrow S^{1}}
\newcommand{\mcg}{\mathrm{Mod}}
\begin{document}
\title{Computing the Teichm\"uller polynomial}

\author{Erwan Lanneau and Ferr\'an Valdez}
\date{\today}

\address{
Erwan Lanneau \newline
UMR CNRS 5582 \newline
Institut Fourier, Universit\'e de Grenoble I, BP 74, 38402 Saint-Martin-d'H\`eres, France
}
\email{erwan.lanneau@ujf-grenoble.fr}
\address{
Ferr\'an Valdez \newline
Centro de Ciencias Matem\'aticas, UNAM, Campus Morelia, C.P. 58190, Morelia, \newline 
Michoac\'an,  M\'exico.
}
\email{ferran@matmor.unam.mx}
\keywords{Teichm\"uller polynomial, Pseudo-Anosov homeomorphism, Thurston norm}

\begin{abstract}
The Teichm\"uller polynomial of a fibered 3-manifold, defined in~\cite{Mc}, plays a useful role in 
the construction of mapping class having small stretch factor. 
We provide an algorithm that computes the Teichm\"uller polynomial
of the fibered face associated to a pseudo-Anosov mapping class of a 
disc homeomorphism. As a byproduct, our algorithm allows us to derive all the relevant informations 
on the topology of the different fibers that belong to the fibered face.
\end{abstract}
\maketitle

\section{Introduction}
A fibered hyperbolic 3-manifold $M$ is a rich source
for pseudo-Anosov mapping classes: Thurston's theory of fibered faces tells us that integer points in the \emph{fibered cone} $\R^{+} \cdot F \subset H^1(M,\R)$ over the \emph{fibered face} $F$ of the Thurston norm unit ball correspond to fibrations of $M$ over the circle. Given that $M$ is hyperbolic, the monodromy of each such fibration is a pseudo-Anosov class $[\psi]$ with stretch factor $\lambda(\psi)>1$. These stretch factors are packaged in the Teichm\"uller polynomial, defined in ~\cite{Mc}. This is an element $\Theta_F=\sum_{g\in G} a_{g}g$ in the group ring $ \Z[H_1(M,\Z)/\mathrm{Torsion}]$, which is
associated to the fibered face $F$ and that is used to compute effectively the stretch factor $\lambda(\psi)$. More precisely,  if $[\alpha]\in H^{1}(M,\Z)$ is the integer class corresponding to $\pa$ in the fibered cone and $\xi_{\alpha}\in H_{1}(M,\Z)$ is its dual, then the largest root of the Laurent polynomial $\Theta_{F}(\alpha):=\sum_{g\in G}a_{g}\cdot t^{\hspace{1mm}\xi_{\alpha}(g)}\in\Z[t,t^{-1}]$ 
(in absolute value)  is the stretch factor $\lambda(\pa)$. The Teichm\"uller polynomial  has been used as a natural source of pseudo-Anosov 
homeomorphism having small {\em normalized stretch factors}: infinite families of 
pseudo-Anosov homeomorphisms $[\pa] \in \mathrm{Mod}(\Sigma_g)$ satisfying
$\lambda(\pa)^g = O(1)$ as $g\to \infty$. In particular, it has been intensively used in the papers by Hironaka~\cite{Hi-LT}, Hironaka-Kin~\cite{Hi}, 
Kin-Takasawa~\cite{KT1}, Kin-Takasawa and Mitsuhiko~\cite{KT}, Kin-Kojima-Takasawa~\cite{KKT}.
Most of known pseudo-Anosov homeomorphisms having a small normalized stretch factor are coming from 
fibrations of two very particular hyperbolic manifolds: the manifold coming from the simplest hyperbolic braid~\cite{Mc,Hi-LT} and the
``magic manifold'' see {\em e.g.}~\cite{KT}.

The Teichm\"uller polynomial was originally defined as the generator of the Fitting ideal of a module of transversals (defined by a lamination) over $\Z[H_1(M,\Z)/\mathrm{Torsion}]$. However, it is a result of McMullen~\cite{Mc} that this polynomial can also be defined in terms of the transition matrix of an infinite train track associated to a fibration on the fibered face $F$.

The main goal of our paper, based on this second definition, is to present an algorithm to compute explicitly the 
Teichm\"uller polynomial and to give a unified presentation of the aforementioned papers. More precisely we will denote the mapping torus of $[\pa]\in \mcg(S)$ by
$$
M_{\pa}:=S\times [0,1]/(x,1)\sim(\pa(x),0)
$$
and we will suppose that the first Betti number of $M_{\pa}$ is at least 2. \medskip

Based on the results of Penner and Papadopoulos \cite{PaPe} on train tracks and elementary operations
(folding operations in the present paper), we provide an algorithm that
\begin{enumerate}
\item computes the Teichm\"uller polynomial $\Theta_F$ of the fibered face $F$ of $M_\pa$ where
$[\pa]\in {\rm Mod}(S)$ is a pseudo-Anosov class.
\item computes the topology (genus, number of singularities and type) of the fibers of fibrations in the cone $\mathbb{R}^{+}\cdot F$.
\end{enumerate}

We will present our algorithm in the case where $S$ is the $n$-punctured disc $\mathbf{D}_{n}$. Then 
${\rm Mod}(S)$ is naturally isomorphic to the braid group $B(n)$. Let $\beta\in B(n)$ and let 
$[f_{\beta}]$ be the corresponding mapping class in $\mathrm{Mod}(\mathbf{D}_n)$. We shall show:

\begin{theorem}
For any pseudo-Anosov class $[f_\beta]\in \mathrm{Mod}(\mathbf{D}_n)$ 
represented by a path in some automaton
$$
\tau_{1}\stackrel{T_{1}}\longrightarrow\tau_{2}\stackrel{T_{2}}{\longrightarrow}\cdots
\stackrel{T_{n-1}}\longrightarrow\tau_{n} \stackrel{T_n}\longrightarrow\tau_{n+1},
$$
with transition matrices $M_i=M(T_i)\in \mathrm{GL}(\Z^r)$, the Teichm\"uller polynomial $\Theta_{F}({\bf t},u)$ of the associated fibered face $F$ determined by $[f_\beta]$ is:
$$
\Theta_{F}({\bf t},u)=\mathrm{det}\left(u\cdot \Id-M_1D_1 \cdot  M_{2} D_{2} \cdots M_{n}D_n R\right)
$$
where the diagonal matrices $D_i\in \mathrm{GL}(\Z[{\bf t}]^r)$ 
are uniquely determined  in terms of the path in the automaton and $R\in \mathrm{GL}(\Z^r)$ is a relabeling matrix.
\end{theorem}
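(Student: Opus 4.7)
The plan is to invoke McMullen's alternative characterization of the Teichm\"uller polynomial as the characteristic polynomial of the lifted transition matrix of an infinite train track, rather than the Fitting-ideal definition. Any pseudo-Anosov class $[f_\beta]\in\mcg(\Dn)$ carries an invariant train track $\tau$, and by the Penner--Papadopoulos combinatorics a representative of $f_\beta$ can be decomposed into a product of elementary folding moves $T_1,\ldots,T_n$ realizing the path $\tau_1\to\tau_2\to\cdots\to\tau_{n+1}$ in the folding automaton, followed by a relabeling $R$ that identifies $\tau_{n+1}$ with $\tau_1$. The unlifted transition matrices $M_i=M(T_i)$ on the space of branch weights multiply to the Perron--Frobenius matrix $M_1M_2\cdots M_n R$ of $f_\beta$, whose leading eigenvalue is $\lambda(f_\beta)$. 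The target determinant formula will be obtained by replacing this product by its lift to the maximal free abelian cover.

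Concretely, let $G=H_1(M_{f_\beta},\Z)/\Tor$ and let $r$ be the number of branches of $\tau$. The train track $\tau\subset\Dn$ lifts to a $G$-equivariant infinite train track $\widetilde{\tau}$, and the $\Z[G]$-module of transverse weights on $\widetilde{\tau}$ is free of rank $r$. Each folding move $T_i$, once lifted to $\widetilde{\tau}$, still permutes and combines branches according to the combinatorial matrix $M_i$, but now each operation also picks up a covering translation by a specific element of $G$, determined by the homology class swept out by the fold. Recording these translations as monomials in $\mathbf{t}$ produces the diagonal matrix $D_i\in\mathrm{GL}(\Z[\mathbf{t}]^r)$: its $(k,k)$-entry is the group-ring monomial corresponding to the $G$-shift required to align the lift of the $k$-th branch after the fold with its chosen preferred lift in the next track. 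Composing the $n$ folds and then the relabeling $R$, the lifted transition matrix is $M_1D_1\cdot M_2D_2\cdots M_nD_n R$, and its characteristic polynomial in $u$ is, by McMullen's theorem, $\Theta_F(\mathbf{t},u)$.

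The main obstacle is the precise definition of the $D_i$ and the verification that this construction is consistent. This requires fixing, once and for all, a choice of lifts of the branches of every $\tau_i$ to $\widetilde{\tau_i}$ and then, fold by fold, reading off the group-ring monomial that measures the discrepancy between the image of each branch under the lifted $T_i$ and the preferred lift in $\widetilde{\tau_{i+1}}$. One must also check that changing the system of preferred lifts alters each $D_i$ only by multiplication by units of $\Z[G]$, so that the resulting determinant is well-defined up to the standard ambiguity inherent in $\Theta_F$. Once the $D_i$ are pinned down, the identity $\Theta_F(\mathbf{t},u)=\det(u\cdot\Id-M_1D_1\cdots M_nD_n R)$ follows from McMullen's description of $\Theta_F$ as the characteristic polynomial of the lifted transition matrix, combined with the multiplicativity of that lifted matrix under composition of folds.
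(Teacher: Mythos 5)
You correctly identify the overall strategy: factor the train-track representative of $f_\beta$ into elementary foldings via Penner--Papadopoulos, observe that the lift of each folding to the $H$-cover $\widetilde{\mathbf{D}_n}$ carries along a diagonal "twist" recorded by monomials in the deck group, multiply these lifted matrices, and then compare with McMullen's determinant formula. This matches the skeleton of the paper's argument. However, two essential pieces are missing.

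First, McMullen's formula (Theorem~\ref{THM:TPformula}) is not simply "the characteristic polynomial of the lifted transition matrix"; it is the \emph{quotient}
$\Theta_F(t,u)=\det(u\cdot\mathrm{Id}-P_E(t))/\det(u\cdot\mathrm{Id}-P_V(t))$,
where $P_E$ acts on the full weight space including the infinitesimal edges and $P_V$ on the vertex space. The product $M_1D_1\cdots M_nD_nR$ in the theorem is an $r\times r$ matrix indexed only by \emph{real} edges. To pass from McMullen's two-determinant quotient to a single characteristic polynomial over $\mathbb{A}_{\mathrm{real}}$ you must show that the contribution of the infinitesimal edges to $\det(u\cdot\mathrm{Id}-P_E)$ cancels the denominator $\det(u\cdot\mathrm{Id}-P_V)$. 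Your proposal silently skips this step; the paper flags it explicitly at the start of its proof, and it is precisely what justifies replacing the quotient by the single determinant on real edges.

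Second, you describe the determination of the $D_i$ as "the main obstacle," and then assert that once they are "pinned down" the result follows. But the explicit determination of the $D_i$ is not a residual obstacle --- it is the mathematical content of the theorem. The claim that the lifted transition matrix factors as $M(T_i)\cdot\mathrm{Diag}(v_i)$ with a $v_i$ that can be read off combinatorially from the folding (case 1 versus case 2 of Definition~\ref{DEF:VECTV}, together with the recursion $\eta_i=\pi_{i-1}\circ\eta_{i-1}$ tracking the cumulative permutation of the prong-labels) is established in the paper by a case analysis over the basic types of foldings and an explicit picture of each fold in a fundamental leaf of the $H$-cover. Without specifying which branches pick up the monomial $X^{\pm1}$, your argument only shows the existence of \emph{some} element of $\mathrm{GL}(\Z[H]^r)$ measuring the discrepancy of lifts; it does not show that the element is diagonal, nor that it is determined by the path in the automaton alone (which is what the theorem asserts). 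So while the outline correctly mirrors the paper's plan, filling in these two gaps --- the vertex-space cancellation and the case-by-case determination of the $D_i$ --- constitutes essentially the entire proof, and both are absent.
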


For a more precise statement, in particular the nature of the variables $u$ and $\bf t$, see Theorem~\ref{thm:second:main} (see also Section~\ref{sec:automaton}
for the definition of the automaton). Observe that Bestvina and Handel~\cite{BH} have introduced an effective algorithm that determines 
whether a given homeomorphism $f\in \mathrm{Mod}(\mathbf{D}_n)$ is pseudo-Anosov. See also~\cite{B} and~\cite{H}
for implementation of the algorithm: in the pseudo-Anosov case it generates the train tracks and a path in
some corresponding automaton. 

\subsection*{Reader's guide}

In Section~\ref{sec:background} we recall Thurston's theory of fibered faces
and we review basic definitions and properties of the Teichm\"uller polynomial and its 
relation with the stretch factor associated to the monodromy of a fibration $\fibb$.
In Section~\ref{sec:tt} we describe a general strategy to compute the Teichm\"uller polynomial $\Theta_F$ 
from a train track and a train track map (after~\cite{Mc}). In Section~\ref{sec:automaton} we introduce the
notion of automaton and we give several relevant examples. In particular we use the convention 
of labelled train-tracks (similarly to Kerckhoff and Marmi-Moussa-Yoccoz did for interval exchange
transformations). Section~\ref{sec:main:theo} is devoted 
to the statement and the proof of our main theorem. Finally as a byproduct, our algorithm allows us also to derive 
all the relevant informations on the topology of the different fibers that belong to the face. This is the 
content of Section~\ref{ss:TN} (and correspond to Proposition~\ref{prop:cc:boundary}, Corollary~\ref{cor:slope} 
and Propositions~\ref{prop:cc:singu}-\ref{prop:cc:singu:2}). In Sections~\ref{sec:examples}, 
~\ref{EXB4} and Appendix~\ref{APENDIXA} we apply our results to produce several examples, recovering the ones of 
McMullen~\cite{Mc} and Hironaka~\cite{Hi-LT}, but also giving infinitely many new examples of  Teichm\"uller polynomials defined by 
pseudo-Anosov braids in $B_{n}$, for $n\geq 4$.

\subsection*{Acknowledgments}

We would like to thank Ursula Hamenst\"adt, Eriko Hironaka, J\'er\^ome Los, Curt McMullen, and Jean-Luc Thiffeault for very useful and stimulating discussions.

  We would also 
thank Centro de Ciencias Matem\'aticas, UNAM in Morelia and Institut Fourier in Grenoble for the hospitality during 
the preparation of this work. Some of the research visits which made this collaboration possible were supported by 
the ANR Project GeoDyM. The authors are partially supported by the ANR Project GeoDyM. The second author was generously supported by LAISLA, CONACYT CB-2009-01 127991 and PAPIIT projects IN100115, IN103411 \& IB100212 during the realization of this project.

\section{Thurston's theory of fibered faces and the Teichm\"uller polynomial}
\label{sec:background}

In this section we recall Thurston's theory of fibered faces. We also review the construction of 
the Teichm\"uller polynomial and its relation with the stretch factor associated 
to the monodromy of a fibration $\fibb$.

We begin by fixing several notations. Let $S$ be a surface (for which one might have
$\partial S\neq \emptyset$). Let $[\pa]$ be a class in $\mcg(S)$. 
A deep result by Thurston (see {\em e.g.}~\cite[\S 13, Thm. 13.4]{FM}) tell us that $M_{\pa}$ admits a hyperbolic metric if and only if the mapping class $[\pa]$ is pseudo-Anosov. By Mostow's rigidity theorem the isometry  class of 
$M_{\pa}$ does not depend on the choice of the representative or the conjugacy class of $[\pa]\in\mcg(S)$.

\subsection{Thurston norm and fibered faces}
Thurston introduced a very effective tool for studying essential surfaces in 3-manifolds: a 
norm on $H_2(M,\R)$. For practical reasons, we will define this norm on $H^{1}(M,\R)$. 
For nice references see {\em e.g.}~\cite[expos\'e 14]{FLP},~\cite{Cal,Thur}.

For a compact connected surface $S$, let $\chi_{-}{(S)}=|\min\{0,\chi(S)\}|$. In general, 
if a surface $S$ has $r$ connected components $S_{1},\dots,S_r$ we define 
$\chi_{-}{(S)}$ by $\sum_{i=1}^r\chi_{-}{(S_{i})}$. This determines a function $||\cdot||_{T}:H^{1}(M,\R)\to \N\cup\{0\}$ as follows:
$$
||[\alpha]||_{T}:= \inf \left\{
\chi_{-}(S)\hspace{1mm}|\hspace{1mm}\text{$S$ is a properly embedded oriented surface where $[S]$ is dual to $[\alpha]$}
\right\},
$$
where $[S]\in H_{2}(M,\Z)$ (or $H_{2}(M,\partial M;\Z)$ if $\partial M\neq \emptyset$). So far this function just measures the minimal topological complexity of a surface dual to $[\alpha]$. However, if $M$ is irreducible 
(\emph{i.e.} if every embedded sphere bounds a ball) then $||\cdot||_{T}$
satisfies the pseudo-norm properties. Therefore it has a unique continuous extension to a pseudo norm on $H^{1}(M,\R)$. 
If in addition $M$ is atoroidal and $\chi(\partial M)=0$, this continuous extension is a norm. 
This is the so called \emph{Thurston norm}. The unit ball of this norm has a very special geometry.
\begin{theorem}\cite{Thur}
	\label{Th:TNP}
Let $M$ be an irreducible and atoroidal manifold. Then the unit ball of the Thurston norm is a convex finite sided polytope.
\end{theorem}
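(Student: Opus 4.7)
The plan is to verify that $||\cdot||_T$ is a piecewise linear function on $H^1(M,\R)$ with only finitely many pieces; this is equivalent to its unit ball being a finite-sided convex polytope. Convexity and central symmetry of the unit ball already follow from the norm axioms, and the hypotheses (irreducibility, atoroidality, $\chi(\partial M)=0$) are precisely what upgrade the pseudo-norm to a genuine norm, so the unit ball is a bounded symmetric convex body. The starting observation for the polyhedral structure is that, by construction, $||\alpha||_T \in \N \cup \{0\}$ whenever $\alpha \in H^1(M,\Z)$, and by positive homogeneity $||\cdot||_T$ is rational-valued on $H^1(M,\Q)$.

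Next, I would establish the key additivity property: if $\alpha,\beta \in H^1(M,\Z)$ are represented by Thurston-norm-minimizing oriented surfaces $S_\alpha, S_\beta$ which can be put in general position so that their orientations match along every component of intersection, then
$$
||\alpha + \beta||_T \;=\; ||\alpha||_T + ||\beta||_T.
$$
The standard proof uses the \emph{oriented double curve sum}: cut along $S_\alpha \cap S_\beta$ and reconnect the pieces consistently with the orientations to produce an embedded oriented surface $S$ representing $\alpha+\beta$ with $\chi_-(S)=\chi_-(S_\alpha)+\chi_-(S_\beta)$. For this to be effective one must also know that norm-minimizing representatives can be taken to be incompressible (any compressing disk can be used to reduce $\chi_-$), so that Euler characteristic is strictly additive and no spherical components appear to be discarded. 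This additivity forces $||\cdot||_T$ to be linear on certain rational polyhedral cones of classes sharing compatible norm-minimizing representatives; these cones are exactly the cones over the top-dimensional faces of $B$.

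For the finiteness step, I would appeal to Kneser–Haken finiteness in the irreducible 3-manifold $M$: for every $N>0$, the set of isotopy classes of essential (incompressible, $\partial$-incompressible) properly embedded oriented surfaces with $\chi_-\leq N$ is finite. Combined with the additivity above, this tells us that only finitely many "elementary pieces" are available to build norm-minimizing representatives of classes lying in any compact sublevel set, so there are only finitely many cones of compatible classes, and therefore $B$ has only finitely many top-dimensional faces. Together with rationality of $||\cdot||_T$ on $H^1(M,\Q)$, this produces a finite rational polyhedral description of $B$.

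The main obstacle is the additivity step: executing the double curve sum in a way that respects orientations and does not increase $\chi_-$, and verifying that norm-minimizing surfaces can always be chosen incompressible and placed in the required position, constitutes the genuine 3-manifold-topological content of Thurston's original argument. Once additivity is in hand, convexity plus Kneser–Haken finiteness assembles the polytope structure almost formally.
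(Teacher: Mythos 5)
The paper does not prove this theorem; it records the statement and points to Calegari's book~\cite[Theorem 5.10]{Cal} (and of course to Thurston). So I am comparing your attempt against the standard proof that the reference supplies.

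You correctly isolate the essential inputs: convexity, homogeneity, and the fact that $\|\cdot\|_T$ takes integer values on $H^1(M,\Z)$ and hence rational values on rational classes. Where the argument breaks down is in how you try to get the polyhedral structure from there. In the standard proof, \emph{all} of the 3-manifold topology is spent on showing that $\|\cdot\|_T$ is a seminorm: subadditivity comes from the oriented cut-and-paste (double curve sum), and homogeneity from passing to covers, with irreducibility and atoroidality used to promote the seminorm to a norm. Once that is done, the polyhedral shape of the unit ball is a \emph{purely convex-geometric} fact: any norm on a finite-dimensional real vector space that is integer-valued on a full lattice $L$ can be written as $\max_i |\ell_i(\cdot)|$ for a finite collection $\ell_1,\dots,\ell_k$ in the dual lattice $L^*$. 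The finiteness comes from boundedness of the dual unit ball intersected with the lattice $L^*$; no further topology is invoked. You never state or prove this lemma, and your route around it does not close the gap.

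More concretely, two steps in your proposal do not work as stated. First, the oriented double curve sum yields only the \emph{subadditivity} $\|\alpha+\beta\|_T\le\|\alpha\|_T+\|\beta\|_T$; the sum $S$ need not be norm-minimizing, so the claimed equality is not established. Exact linearity of $\|\cdot\|_T$ on the cone over a top-dimensional face is a \emph{consequence} of the polyhedral structure (for fibered faces this is essentially Fried's theorem), so appealing to it here, and to ``cones of classes sharing compatible norm-minimizing representatives'' which ``are exactly the cones over the top-dimensional faces of $B$,'' is circular --- you are presupposing the polytope you want to exhibit. Second, what you invoke is not Kneser--Haken finiteness (which bounds the number of \emph{disjoint pairwise non-parallel} incompressible surfaces) but Haken's much stronger finiteness theorem on isotopy classes of normal surfaces of bounded complexity; and even granting it, you give no argument passing from ``finitely many incompressible surfaces with $\chi_-\le N$'' to ``finitely many facets of the unit ball.'' These are different kinds of finiteness, and the deduction is exactly what the convex-geometric lemma provides cleanly and elementarily. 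The missing idea, in short, is Thurston's Theorem~2 from \cite{Thur}: integer-valued norms have rational polyhedral unit balls, a statement about lattices and convex bodies with no 3-manifold content at all.
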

An avid reader can consult the proof on the preceding theorem on Calegari's book (see~\cite[Theorem 5.10]{Cal}). The most striking aspect of the Thurston norm is that it provides a very nice picture for homology classes representing fibrations of $M$ over the circle.

\subsection{From homology classes to fibrations}

Let $[M,S^{1}]$ denote the set of homotopy classes of maps from $M$ to $S^{1}$. Given a class $[f]\in [M,S^{1}]$ one can choose a smooth representative $f:M\to S^{1}$ and $d\theta$ the angle form on $S^{1}$. The pullback defines a class $[f^{*}d\theta]$ in $H^{1}(M,\R)$.  This correspondence defines a bijection between $H^{1}(M,\Z)$ and $[M,S^{1}]$. We will call  $[\alpha]\in H^{1}(M,\Z)$ \emph{a fibration} if the corresponding class in $[M,S^{1}]$ is a fibration. 
Let us define:
$$
\Phi(M):=\{[\alpha]\in H^{1}(M,\Z)\hspace{1mm}|\hspace{1mm} [\alpha]\hspace{1mm}\text{is a fibration}\}
$$
and for every face $F$ of the Thurston norm ball let $\R^{+}\cdot F$ denote the positive cone in $H^{1}(M,\R)$ whose basis is $F$. 
\begin{theorem}\cite{Thur}
	\label{Th:FF}	
	Suppose that $b_{1}(M)\geq 2$. 
If $\Phi(M)\cap \R^{+}\cdot F\neq\emptyset$ for some top-dimensional face $F$ of the Thurston norm unit ball, then 
$\Phi(M)\cap \R^{+}\cdot F = H^{1}(M,\Z)\cap\R^{+}\cdot F$.
\end{theorem}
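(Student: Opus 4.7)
\noindent\emph{Proof proposal.} The plan is to follow Thurston's original strategy from his norm paper, centered on Tischler's theorem identifying fibration classes with classes representable by nowhere-vanishing closed $1$-forms. Introduce the set
\[
\cH(M):=\{\,[\alpha]\in H^{1}(M,\R)\hspace{1mm}|\hspace{1mm}[\alpha]\text{ has a nowhere-zero closed }1\text{-form representative}\,\},
\]
and reduce the theorem to three claims: (i) $\cH(M)\cap H^{1}(M,\Z)=\Phi(M)$; (ii) $\cH(M)$ is an open positive cone in $H^{1}(M,\R)$; and (iii) the Thurston norm is linear on each connected component of $\cH(M)$.

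Claim (i) is Tischler's theorem. Any nowhere-zero closed $1$-form $\omega$ with integral periods has a primitive descending to a submersion $M\to S^{1}$, hence a fibration; the converse uses the pullback $f^{*}d\theta$. Claim (ii) is soft: since $M$ is compact, being nowhere zero is an open $C^{0}$ condition on forms, and any cohomology class near a given $[\alpha]\in\cH(M)$ can be represented by a form close to a chosen nowhere-zero representative; positive homogeneity is immediate. So (ii) gives that each connected component of $\cH(M)$ is an open cone.

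Claim (iii) is the substantive step, and the one I expect to be the main obstacle. The key geometric input is that whenever $[\alpha_{0}]\in\Phi(M)$ has fiber $\Sigma_{0}$, the fiber realizes the norm, i.e.\ $\|[\alpha_{0}]\|_{T}=\chi_{-}(\Sigma_{0})$. One proves this using that $\Sigma_{0}$ is a leaf of a taut foliation of $M$, and taut leaves are norm-minimizing via the foliated-cycle pairing argument (any other surface dual to $[\alpha_{0}]$ must pair with the tangent plane field at least as much as $\Sigma_{0}$ does). For classes $[\alpha]\in\cH(M)$ in the same component as $[\alpha_{0}]$, represent $[\alpha]$ by a nowhere-zero closed $1$-form $\omega$ whose kernel is a codimension-one foliation $\mathcal{F}_{\omega}$; for rational classes its leaves are again fibers, and the Euler characteristic depends linearly on $[\alpha]$, being computed as the pairing of $[\alpha]$ with the Euler class of the plane field $\ker\omega$. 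Combined with norm-realization, this yields linearity of $\|\cdot\|_{T}$ on the connected component.

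Finally, by Theorem~\ref{Th:TNP} the unit ball of the Thurston norm is a convex finite-sided polytope, and linearity of the norm on a connected open cone forces that cone to coincide with the open cone $\R^{+}\cdot\mathrm{int}(F)$ over some top-dimensional face $F$. If $\Phi(M)\cap\R^{+}\cdot F\neq\emptyset$, the component of $\cH(M)$ meeting $\R^{+}\cdot F$ is precisely $\R^{+}\cdot\mathrm{int}(F)$, and by (i) every integer class in this open cone lies in $\Phi(M)$. Integer classes lying on the relative boundary $\R^{+}\cdot\partial F$ are handled by the hypothesis that $F$ is top-dimensional (such boundary classes belong to lower-dimensional faces and are covered by considering those faces separately, or are excluded by interpreting $\R^{+}\cdot F$ as the open cone).
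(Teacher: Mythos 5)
The paper does not prove Theorem~\ref{Th:FF}; it is cited verbatim from Thurston's norm paper, so there is no internal proof to compare against. Your sketch does follow the standard Thurston strategy (Tischler's theorem, openness of the cone of non-vanishing $1$-form classes, linearity of the norm on that cone), and steps (i) and (ii) are sound.

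The genuine gap is in the last paragraph. From ``the norm is linear on the connected open cone $C$'' you conclude that $C$ \emph{coincides with} $\R^{+}\cdot\mathrm{int}(F)$. Linearity only gives the containment $C\subset\R^{+}\cdot\mathrm{int}(F)$: since $\|\cdot\|_T$ is piecewise linear with polytope unit ball, any connected open set on which it is linear sits inside the open cone over a single top-dimensional face, but nothing in the argument so far rules out $C$ being a \emph{proper} open subcone of $\R^{+}\cdot\mathrm{int}(F)$. Showing the reverse inclusion is precisely the hard half of Thurston's theorem. Concretely, given an integer class $[\beta]$ in the interior of the cone with $\|[\beta]\|_T=\langle e,[\beta]\rangle$ (where $e$ is the Euler class of the fibration's tangent plane field), one must produce an actual fibration representing $[\beta]$. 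Thurston does this by taking a norm-minimizing surface dual to $[\beta]$ and showing that the Euler-class equality forces it (after isotopy) to be transverse to the suspension flow of the original monodromy, hence to be a cross-section, hence a fiber; the relevant inequality $|\langle e,[S]\rangle|\le\chi_{-}(S)$ and its rigidity when equality holds are the crux. (Fried's cross-section theory gives a cleaner route to the same conclusion.) Your proposal replaces this with the unproved assertion that the open component already fills the cone; as written, the proof does not close. The side remark about integer classes on $\R^{+}\cdot\partial F$ is also off: if $\R^{+}\cdot F$ is read as the closed cone the statement is false for such classes, so the intended reading here is the cone over the open face and those classes simply don't arise.
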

When $\Phi(M)\cap \R^{+}\cdot F\neq\emptyset$ we call $F$ a \emph{fibered face} and $\R^{+}\cdot F$ a \emph{fibered cone}. A fiber of a fibration minimizes the Thurston norm in its homology class (see~\cite[Corollary 5.13]{Cal}).

\subsection{Hyperbolic manifolds}

If the manifold M is hyperbolic, then the monodromy of each fibration $\Sigma\to M\to S^{1}$ defines a pseudo-Anosov class in $\mcg(\Sigma)$. 
Hence we can think of each integer point in a fibered cone $\R^{+}\cdot F$ as a pseudo-Anosov class (on a surface that is not necessary connected). 
We want to compute, for a fixed fibered face $F$, the stretch factors of all pseudo-Anosov maps arising as monodromies of fibrations in the fibered cone $ \R^{+}\cdot F$. This can be done by using an invariant of the fibered face called the \emph{Teichm\"uller polynomial}. Roughly speaking, this polynomial invariant is an element of the group ring $\Z[G]$, where $G=H_{1}(M,\Z)/\Tor$ and $\Tor$ is the torsion subgroup of $H_{1}(M,\Z)$. Following McMullen, let us denote it by $\Theta_{F}$. We will now explain how $\Theta_{F}$ is used to calculate stretching factors of pseudo-Anosov monodromies and we will later deal with its definition.
For any $[\alpha]\in H^{1}(M,\Z)$ we can associate a morphism $(\xi_{\alpha}:H_{1}(M,\Z)\to \Z)\in {\rm Hom}(H_{1}(M,\Z),\Z)$. 
Now $\Theta_{F}$ is an element of the group ring $\Z[G]$, thus it can be written as a formal sum:
$$
\Theta_{F}=\sum_{g\in G}a_{g}g, \hspace{3mm} a_{g}\in\Z\hspace{1mm}\text{for all}\hspace{1mm} g\in G
$$
where at most a finite number of coefficients $a_{g}$ are different from zero. The valuation of $\Theta_{F}$ at $[\alpha]$ is defined as follows:
$$
\Theta_{F}(\alpha):=\sum_{g\in G}a_{g}\cdot t^{\hspace{1mm}\xi_{\alpha}(g)}\in\Z[t,t^{-1}].
$$
Remark that $\Theta_{F}(\alpha)$ is a Laurent polynomial in $\Z[t,t^{-1}]$. Let $\lambda(\alpha)$ be the stretch factor of the pseudo-Anosov class in $\mcg(\Sigma)$ defined by the monodromy of the fibration corresponding to $[\alpha]$. The following theorem relies the Laurent polynomial $\Theta_{F}(\alpha)$ 
to $\lambda(\alpha)$.
\begin{theorem}\cite{Mc}
For any fibration $[\alpha]\in\R^{+}\cdot F$, the stretch factor $\lambda(\alpha)$ is given by the largest root 
(in absolute value) of the equation:
$$
\Theta_{F}(\alpha)=\sum_{g\in G}a_{g}\cdot t^{\hspace{1mm}\xi_{\alpha}(g)}=0.
$$
\end{theorem}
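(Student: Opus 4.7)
The plan is to use McMullen's reinterpretation of $\Theta_F$ via the transition matrix of a train track lifted to the maximal torsion-free abelian cover. Start with a primitive class $[\alpha_0]\in\R^+\cdot F$ whose monodromy $\psi_0$ on the fiber $\Sigma_0$ is pseudo-Anosov. Choose an invariant train track $\tau_0 \subset \Sigma_0$ carrying the unstable foliation of $\psi_0$; the Perron--Frobenius eigenvalue of its transition matrix $A_0$ equals $\lambda(\alpha_0)$. Lift the suspension of $\tau_0$ to the cover $\widetilde{M}\to M_{\psi_0}$ with deck group $G = H_1(M,\Z)/\Tor$ to obtain a $G$-equivariant train track $\widetilde{\tau}$ whose edge module is finitely generated and free over $\Z[G]$, so that the suspension flow defines a transition matrix $\widetilde{A}$ with entries in $\Z[G]$. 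One identifies $\Theta_F$ with $\det(u\cdot \Id - \widetilde{A})$ up to a unit of $\Z[G]$, where $u\in G$ is a distinguished generator dual to $[\alpha_0]$.

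For any fibration $[\alpha]\in\R^+\cdot F$, the homomorphism $\xi_\alpha:G\to\Z$ extends to a ring map $\Z[G]\to\Z[t,t^{-1}]$ sending $g\mapsto t^{\xi_\alpha(g)}$, yielding the Laurent polynomial $\Theta_F(\alpha)(t)$ as the evaluation of $\det(u\cdot \Id-\widetilde{A})$. The key geometric claim is that the same $\widetilde{\tau}$ simultaneously carries the unstable laminations of the monodromies $\psi_\alpha$ of every fibration in $\R^+\cdot F$: slicing along a lift of the fiber $\Sigma_\alpha$ in the cover produces a train track $\tau_\alpha\subset \Sigma_\alpha$ invariant under $\psi_\alpha$, whose transition matrix is the $\xi_\alpha$-specialization of $\widetilde{A}$ after an appropriate change of basis. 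Once this is established, Perron--Frobenius identifies $\lambda(\alpha)$ with the largest-modulus root of the characteristic polynomial of the specialized matrix, which is exactly the largest root of $\Theta_F(\alpha)(t)=0$ in the variable $t$.

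The main obstacle is this \emph{single train track for the entire face} principle: producing a $G$-equivariant lamination on $\widetilde{M}$ whose $\xi_\alpha$-restrictions are the invariant laminations of $\psi_\alpha$ for every integer fibration $[\alpha]\in\R^+\cdot F$. This relies on Fried's theorem on continuity of the normalized entropy across a fibered face, together with the existence of a transverse flow whose cross-sections realize the fibers of every class in $\R^+\cdot F$; upgrading from foliations to train tracks then requires a careful consistency check on the splitting sequences in the cover, plus the observation that the $\xi_\alpha$-pullback of $\widetilde{\tau}$ is birecurrent and carries the Perron--Frobenius eigenvector of the specialized transition matrix.
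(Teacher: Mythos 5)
The statement you are asked to prove is cited by the paper to McMullen~\cite{Mc}; the paper itself offers no proof of it, and your sketch is essentially a faithful reconstruction of McMullen's argument, which is also the argument underlying the paper's own use of the determinant formula (Theorem~\ref{THM:TPformula}). You correctly isolate the two structural ingredients: the lift of the invariant train track to the maximal free-abelian cover $\widetilde{M}$, and Fried's theorem that the suspension lamination $\mathcal{L}$ (and hence the branched surface $\mathcal{L}_\tau$) depends only on the face $F$, which is what lets you ``slice'' the cover along any other fiber $\Sigma_\alpha$ and read off the monodromy $\psi_\alpha$ as a first-return map. That is exactly the mechanism the paper quotes in the proof of Proposition~\ref{prop:cc:singu}.

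That said, there are two places where your sketch glosses over points that must be addressed before the Perron--Frobenius step closes the argument. First, your identification ``$\Theta_F \doteq \det(u\cdot\Id - \widetilde{A})$ up to a unit'' suppresses the vertex-module denominator: the correct formula from~\cite{Mc}, reproduced in the paper as $\Theta_F(t,u)=\det(u\,\Id-P_E(t))/\det(u\,\Id-P_V(t))$, is a ratio. You therefore need to argue that, after specializing by $g\mapsto t^{\xi_\alpha(g)}$, the dominant root of the numerator is not cancelled by a root of the denominator — McMullen does this by observing that $P_V$ is (essentially) a permutation matrix over $\Z[H]$, so its specialization has all roots of modulus one, whereas the dominant root of the specialized edge matrix is $\lambda(\alpha)>1$. (Alternatively, one first reduces to the real-edge submatrix as the paper does in Theorem~\ref{thm:second:main}, after which the denominator has already cancelled; but that reduction itself needs justification.) Second, to invoke Perron--Frobenius on the sliced train track $\tau_\alpha\subset\Sigma_\alpha$, you must know its transition matrix is primitive/irreducible, i.e.\ that $\tau_\alpha$ is a birecurrent invariant track for $\psi_\alpha$ carrying its unstable foliation. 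You flag this as a ``careful consistency check,'' but it is a genuine step: it follows from the fact that the lamination carried by $\widetilde{\tau}$ restricts, on each fiber of $\R^+\!\cdot F$, to the expanding invariant lamination of that fiber's pseudo-Anosov monodromy (Fried again), and an invariant track suited to a pseudo-Anosov foliation has an irreducible transition matrix on real edges (cf.\ Theorem~\ref{thm:PaPe}). With those two points filled in, your argument is complete and coincides with McMullen's.
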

%

%
\section{Teichm\"uller polynomial and train tracks} 
\label{sec:tt}

In this section we recall the construction of the Teichm\"uller polynomial $\Theta_F$ and basic
facts on train tracks.

\subsection{The Teichm\"uller polynomial of a fibered face}
\label{SS:TPF}
In the sequel $G$ will denote $H_{1}(M;\Z)/\Tor$, where $\Tor$ denotes the torsion subgroup of $H_{1}(M;\Z)$. As before we assume that $b_{1}(M)\geq 2$. The pseudo-Anosov monodromy $\psi$ of any fibration $[\alpha]\in\R^{+}\cdot F$ with fiber $\Sigma$ has an expanding invariant lamination $\lambda\subset\Sigma$ which is unique up to isotopy.  Let $\mathcal{L}$ be the mapping torus of $\psi:\lambda\to\lambda$ and $\widetilde{\mathcal{L}}$ be the preimage of the lamination $\mathcal{L}$ on the covering space 
$$
\pi:\widetilde{M}\to M
$$
corresponding to the kernel of the map $\pi_{1}(M)\to G $. As Fried explains (see expos\'ee 14 \cite{FLP}), $\mathcal{L}$ is a compact lamination which, up to isotopy, depends only on the fibered face $F$. Using this fact, McMullen~\cite{Mc} defines the Teichm\"uller polynomial of the fibered face as
$$
\Theta_{F}=gcd(f\hspace{1mm}:\hspace{1mm} f\in I)\in \Z[G]
$$
where $I$ is the \emph{Fitting ideal} of the finitely presented $\Z[G]$-module of transversals of the lamination $\widetilde{\mathcal{L}}$. Remark that $\Theta_{F}$ is well defined up to multiplication by a unit in $\Z[G]$. One of the main results of~\cite{Mc} that we exploit in this article is a formula that allows to compute explicitly  $\Theta_{F}$. We recall how to derive this formula in the sequel.

\subsection{The setting}
	\label{SUBSEC:thesetting}
The formula that allows us to compute explicitly $\Theta_{F}$ needs a particular splitting of the group $G$. 
Fix a fiber $\Sigma\hookrightarrow M$ and let $\psi:\Sigma\to\Sigma$ be the corresponding pseudo-Anosov monodromy. 
We will denote by $H={\rm Hom}(H^{1}(\Sigma,\Z)^{\psi},\Z )\simeq \Z^{b_{1}(M)-1}$ the dual of the $\psi$-invariant cohomology of $\Sigma$.  The natural map $\pi_{1}(S)\to H_{1}(S,\Z)\to H$ determines an infinite $\Z^{b_{1}(M)-1}$-covering:
$$
   \rho:\widetilde{\Sigma}\to\Sigma 
$$
with deck transformation group $H$. We can think of $\widetilde{\Sigma}$ as a component of the preimage of a fixed fiber $\Sigma$ in the covering $\pi:\widetilde{M}\to M$ and $H$ as the subgroup of ${\rm Deck}(\pi)=G$ fixing $\widetilde{\Sigma}$. For every lift
\begin{equation}
   \label{E:lift}
   \widetilde{\psi}:\widetilde{\Sigma}\to\widetilde{\Sigma}
\end{equation}
of $\psi$, the three manifold $\widetilde{M}$ can be easily described in terms of $\widetilde{\Sigma}$ and $\widetilde{\psi}$ 
as follows. For every $k\in\Z$ let $A_{k}$ denote a copy of $\widetilde{\Sigma}\times[0,1]$. Then $\widetilde{M}$ is obtained from $\bigsqcup_{k\in\Z}A_{k}$ by identifying $(s,1)\in A_{k}$ with $(\widetilde{\psi}(s),0)\in A_{k+1}$, for every $k\in \Z$. In this setting, the deck transformation group of $\widetilde{M}$ splits as:
$$
   G=H\oplus \Z \widetilde{\Psi}
$$
where the map $\widetilde{\Psi}$ acts on 
$\widetilde{M}$ as  $\widetilde{\Psi}(s,t)=(\widetilde{\psi}(s),t-1)$. Equipped with these coordinates, 
if $F\subset H^1(M,\R)$ is the fibered face with $[\Sigma]\in \R^+\cdot F$, then we can regard 
$\Theta_F$ as a Laurent polynomial $\Theta_F(t,u)\in \Z[G]=\Z[H]\oplus \Z[u]$ where 
$t=(t_1,\dots,t_{b-1})$ is a basis of $H$ and 
$u=\widetilde{\Psi}$. 

\begin{remark}
	\label{R:dlifts}
If $\widetilde{\psi_{1}}$ and $\widetilde{\psi_{2}}$ are two different lifts of $\psi$ to $\widetilde{\Sigma}$ then $\widetilde{\psi_{1}}= t\cdot \widetilde{\psi_{2}}$ for some $t\in H={\rm Deck(\rho)}$. Hence, taking a different lift in~\eqref{E:lift} is traduced into a change of variables of the form $u'=tu$. On the other hand, since the topology of $\widetilde{M}$ is independent of $\psi$, the topology of the infinite surface $\widetilde{\Sigma}$ is also independent of $\psi$.
\end{remark}

\subsection{Train tracks}
A train track is a connected graph with an additional ``smooth''
structure.  More precisely let $\tau$ be a graph and let $h:\tau \rightarrow \Sigma$ be an
embedding so that the branches are tangent at the vertices. Since
the vertices are smooth, at each vertex the edges can be partitioned
into two sets, called ingoing and outgoing for convenience (the choice of the partition 
is arbitrary). We will also assume that at each vertex of $\tau$ we have a cyclic order
(given by $h$). This gives the notion of \emph{cusps} at a vertex:
this is a region formed by a consecutive pair (in terms of the cyclic
ordering) of either two ingoing or two outgoing edges.

The pair $(\tau,h)$ (often called simply $\tau$ if there is no
confusion) is a \emph{train track} if the additional following properties are
fulfilled:
\begin{enumerate}
\item $\tau$ has no vertex of valence $1$ or $2$;
\item The connected components of $\Sigma \backslash h(\tau)$ are either
  polygons with at least one cusp or annuli with one boundary
  contained in $\partial \Sigma$ and the other boundary with one cusp.
\end{enumerate}

A (transverse) measure $\mu$ on a train track is an assignment of 
a positive real number $\mu(e) \ge 0$ to each edge $e$ of $\tau$ which satisfy the 
switch condition at each vertex: the sum of measures of
edges in the ingoing set is the same as that for the outgoing set. The train track $\tau$ equipped
with a measure $\mu$ will be called a measured train track, and will be denoted $(\tau,h,\mu)$.

\subsection{Measured foliations and train tracks}

We can construct a (class of) measured foliation $\mathcal{F}$ from a measured train track $(\tau,h,\mu)$ as follows.
We replace each edge $e$ of $h(\tau)$ by a rectangle,  of arbitrary width and height $\mu(e)$, 
foliated by horizontal leaves. According to the switch condition, the rectangles glued together
give a subsurface $\tilde \Sigma \subset \Sigma$ (with boundaries) and a measured foliation
$\mathcal{F}$ on $\tilde \Sigma$. Now to define the foliation on the whole surface, one has to collapse the 
complementary regions. By assumption, no complementary components of $\tilde
\Sigma$ into $\Sigma$ are smooth annuli, so that we can contract each boundary
component in order to obtain a well-defined measured foliation on $\Sigma$
(see~\cite {PaPe,FLP} for details). We will call the
sides of the polygons of $\Sigma\backslash \tau$ around the punctures or around the singularities of $\mathcal{F}$
the \emph{infinitesimal edges}.

\begin{remark}
There are many arbitrary parameters in the above construction, but 
the equivalence class $[\mathcal F]$ (up to isotopy and Whitehead moves)
of $\mathcal F$ is well defined.
\end{remark}

There is a converse to the above construction. Let $\mathcal F$ be a measured foliation representing $[\mathcal F]$ and let
$p \in \mathcal F$ be a singularity. Consider a polygon $\Delta_p$ embedded into
the surface $\Sigma$, where each side $\Delta_p$ is contained in a leaf
of $\mathcal F$.  We will say that the subsurface $\Sigma \backslash \cup_{p\,
  \in\,\text{sing.}} \Delta_p$ has a partial foliation (still denoted by
$\mathcal F$) induced by $\mathcal F$. Since all complementary regions of this partial measured foliation
have at least two cusps, we can collapse the leaves of this foliation in order to obtain a measured train track $(\tau,h,\mu)$. \medskip

By considering small segments transversal to the horizontal leaves on the rectangles used in the above procedure, we obtain a  fibered neighborhood $N(\tau) \subset \Sigma$
equipped with a retraction $N(\tau) \rightarrow \tau$. The neighborhood $N(\tau)$ has
cusps on its boundary, and the fibers of the retraction are called
\emph{ties}. The train track $\tau$ can be recovered from $N(\tau)$ by
collapsing every tie to a point. We will say that $\mathcal F$ is \emph{carried} by $\tau$
(denoted by $\mathcal F \prec \tau$) if $\mathcal F$ can be represented by a partial foliation
contained in $N(\tau)$ and transverse to the ties. If in addition
no leaves of $\mathcal F$ connect cusps of $N(\tau)$, we say that $\tau$ is
\emph{suited} to $\mathcal F$. \medskip

The next sections are intended to make explicit some well-known relations
between pseudo-Anosov homeomorphisms and train track morphisms.

\subsection{Invariant train tracks}
\label{SUBSEC:INVARIANTTT}
By definition, any representative of a pseudo-Anosov class $[\psi]\in {\rm Mod}(\Sigma)$ leaves invariant a pair of transverse measured foliations $(\mathcal{F}^{s},\mathcal{F}^{u})$. However 
the action of $\psi$ on these foliations is rather difficult to describe. A good tool to understand this 
action is given by train tracks (see {\em e.g.}~\cite[\S 4]{PaPe}). Let $h:\tau\hookrightarrow \Sigma$  \emph{be suited} to $\mathcal{F}^{u}$.
Since $\mathcal{F}^{u}$ is invariant by $\psi$ it follows that $\tau$ is invariant by $\psi$, namely:
\begin{enumerate}
\item The foliation $\mathcal{F}^{u}$ can be represented by a partial measured foliation $F$ whose support is a fibered neighborhood $N(\tau)$ of $h(\tau)$.
\item The image $\psi(h(\tau))$ can be isotoped to a train track $h'(\tau')$ which is contained in a  \emph{fibered neighborhood} $N(h(\tau))$ of $h(\tau)$, \emph{is transversal to the tie foliation of $h(\tau)$} and has switches that are disjoint from the collection of central ties of $h(\tau)$. 
\end{enumerate}
If (2) holds, we will say that $\psi(\tau)$ is carried by $\tau$ and use the notation $\psi(\tau)\prec\tau$.
\begin{convention}
In this paper, we will work with \emph{labeled train tracks}, that is, triples $(\tau,h,\varepsilon)$, 
where $\varepsilon:E(\tau)\to\mathbb{A}$ is a labeling map from the set of edges of $\tau$ into a fixed 
finite alphabet $\mathbb{A}$. In the sequel we will abuse of the notation and abbreviate $(\tau,h,\varepsilon)$ with $\tau$ whenever the embedding of the 
graph $h:\tau\hookrightarrow\Sigma$ and the labelling are clear from the context.

In order to make a distinction between infinitesimal edges and other edges, we make the following choice: we label 
the infinitesimal edges by capital letters and other edges by minuscule letters. We denote by $\mathbb{A}_{\mathrm{prong}} \subset \mathbb{A}$
the $n$ letters corresponding to infinitesimal edges $E(\tau)_{\mathrm{prong}}$ enclosing the punctures of $\mathbf{D}_n$. We also denote by
$\mathbb{A}_{\mathrm{real}} \subset \mathbb{A}$ the letters corresponding to non-infinitesimal edges $E(\tau)_{\mathrm{real}}$.
 \end{convention}

\subsection{Incidence matrix}

In the above situation, if $\psi(\tau) = \sigma \prec \tau$ we naturally associate an \emph{incidence matrix} $M(\psi) \in \mathrm{GL}(\Z^\mathbb{A})$ to $\psi$ 
in the following way. For any edge $e$ of $\tau$ we make a choice
of a tie above an interior point $e$ (called the central tie associated to the edge $e$). Let $\sigma'$ isotopic 
to $\sigma$ be such that $\sigma' \subset N(\tau)$. For any edge $f$ of $\sigma$ we have a corresponding 
edge $f'$ of $\sigma'$ given by the isotopy. We can furthermore isotope $\sigma'$
slightly so that it is in general position with respect to the central ties of $\tau$. 
Now for any pair $(e,f)$ with labels $(\alpha,\beta)$ ({\em i.e.} $\varepsilon(e)=\alpha$ and $\varepsilon(f)=\beta$) 
we define $M_{\beta,\alpha}(\psi)$ as the geometric intersection between $f'$
with the central tie associated to the edge $e$ of $\tau$. \medskip

A classical theorem (see~\cite[Theorem 4.1]{PaPe}) asserts that in the pseudo-Anosov case, 
the leading eigenvalue of this matrix equals the stretch factor of the pseudo-Anosov class $[\psi]$
(if one restricts to a good set of edges, the corresponding matrix is a Perron-Frobenius matrix).\medskip

{\bf The determinant formula}. Now consider $\tilde{\tau}\subset \widetilde{\Sigma}$ a component of $\rho^{-1}(\tau)$ lying in the infinite surface $\widetilde{\Sigma}$, as defined in \S \ref{SUBSEC:thesetting}. This is an infinite train track whose set of edges and vertices can be identified with $E\times H$ and $V\times H$ respectively. Since $\tau$ is $\psi$-invariant, $\ttau$ is $\widetilde{\psi}$-invariant. This means that 
$\widetilde{\psi}(\tilde{\tau})$ can be isotoped to a train track $\widetilde{\tau}'$ which lies in a tie neighborhood of $N(\tilde{\tau})$ of $\tilde{\tau}$, is transverse to $\widetilde{\tau}$ 's ties and whose switches are disjoint from the collection of central ties of $\tilde{\tau}$. As with the train track $\tau$ and the map $\psi$, we can associate to $\widetilde{\tau}$ and $\widetilde{\psi}$ an incidence matrix $P_{E}(t) \in \mathrm{GL}(\Z[H]^\mathbb{A})$ with entries in $\Z[H]$.
Analogously, there is a matrix $P_{V}(t)$ with entries in $\Z[H]$ associated to the set of vertices of $\widetilde{\tau}$. The next theorem states that the Teichm\"uller polynomial associated to the fibered face $F$ can be recovered from the matrices $P_{E}(t)$ and $P_{V}(t)$.
\begin{theorem}[\cite{Mc}]
	\label{THM:TPformula}
	The Teichm\"uller polynomial of the fibered face $F$ is given by:
	\begin{equation}
	   \label{E:TPform}
	   \Theta_{F}(t,u)=\frac{\det(u\cdot\mathrm{Id}-P_{E}(t))}{\det(u\cdot\mathrm{Id}-P_{V}(t))}.
	\end{equation}
\end{theorem}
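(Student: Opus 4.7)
The strategy is to realize the $\Z[G]$-module $T(\widetilde{\mathcal{L}})$ of transversals combinatorially through the lifted train track $\ttau \subset \TS$. Since the projection $\tau$ is suited to the unstable foliation $\mathcal{F}^{u}$ that carries the lamination $\lambda$, every transversal arc to $\widetilde{\mathcal{L}}$ is, up to the natural equivalence, a $\Z[G]$-linear combination of central ties sitting above edges of $\ttau$, and at each vertex of $\ttau$ the switch condition (total weight entering equals total weight leaving) produces a $\Z[G]$-linear relation among these transversals. My first step is to package this into a short exact sequence of $\Z[G]$-modules
$$0 \longrightarrow Z_V \longrightarrow Z_E \longrightarrow T(\widetilde{\mathcal{L}}) \longrightarrow 0,$$
where $Z_E$ (resp.\ $Z_V$) is the free abelian group on edges (resp.\ vertices) of $\ttau$, endowed with the natural $\Z[G]$-structure coming from the deck action of $H$ together with the dynamical action of $u = \widetilde{\Psi}$ via the chosen lift $\widetilde{\psi}$.

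Second, I would produce square presentations of $Z_E$ and $Z_V$. The edges of $\ttau$ are indexed by $\mathbb{A} \times H$, so as a $\Z[H]$-module $Z_E$ is free on $|\mathbb{A}|$ generators, and $u$ acts on it precisely by the matrix $P_E(t) \in \mathrm{GL}(\Z[H]^{\mathbb{A}})$ extracted from the position of $\widetilde{\psi}(\ttau)$ inside a tie neighborhood $N(\ttau)$. This yields a free resolution
$$\Z[G]^{\mathbb{A}} \xrightarrow{\; u \cdot \mathrm{Id} - P_E(t) \;} \Z[G]^{\mathbb{A}} \longrightarrow Z_E \longrightarrow 0,$$
and an analogous one for $Z_V$ with $P_V(t)$ in place of $P_E(t)$. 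In particular the $0$-th Fitting ideal of $Z_E$ is generated by $\det(u \cdot \mathrm{Id} - P_E(t))$ and that of $Z_V$ by $\det(u \cdot \mathrm{Id} - P_V(t))$.

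To conclude, I would splice these two presentations through the short exact sequence and apply the multiplicativity relation
$$\mathrm{Fit}_0(Z_E) \;=\; \mathrm{Fit}_0(Z_V) \cdot \mathrm{Fit}_0(T(\widetilde{\mathcal{L}})),$$
which is valid here because both presentations are square and $Z_V \hookrightarrow Z_E$ is injective. Taking greatest common divisors and using McMullen's definition $\Theta_F = \gcd\,\mathrm{Fit}_0(T(\widetilde{\mathcal{L}}))$ then yields the claimed formula~\eqref{E:TPform}.

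The principal difficulty is justifying the presentation in the first paragraph: one must show that the $\Z[G]$-module of transversals of $\widetilde{\mathcal{L}}$ is exactly generated by the lifted central ties, with the only relations coming from the lifted switch conditions. This leans on $\tau$ being suited to $\mathcal{F}^{u}$ (so that no leaves connect cusps and every transversal can be isotoped onto ties of $\ttau$) and on the fact that $\widetilde{\mathcal{L}}$ is a pull-back of a genuinely invariant lamination, so that the dynamics of $u$ is captured faithfully by $P_E(t)$. The injectivity of $Z_V \hookrightarrow Z_E$ and the required multiplicativity of Fitting ideals are then consequences of the pseudo-Anosov hypothesis, which forces $u \cdot \mathrm{Id} - P_E(t)$ to be non-degenerate over the fraction field of $\Z[H]$.
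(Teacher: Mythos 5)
The paper does not prove Theorem~\ref{THM:TPformula} at all: it is quoted verbatim from McMullen~\cite{Mc} and used as a black box, so there is no internal argument to compare against. Your sketch is nonetheless recognizably the shape of McMullen's actual argument (present $T(\widetilde{\mathcal{L}})$ via a branched surface carrying the lamination, read off relations from the cell structure of its preimage, compute the order of the resulting torsion $\Z[G]$-module), and the key moves are in the right place.

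Two points need tightening. First, the step you flag yourself as the principal difficulty is indeed a genuine gap: it is not just that the transversals are generated by ties with switch relations, but that the image of the switch map $\Z[G]^{V}\to Z_E$ is exactly the module $Z_V=\Z[G]^{V}/(u\cdot\mathrm{Id}-P_V)$, i.e.\ that the kernel of $\Z[G]^{V}\to Z_E$ is generated by $u\cdot\mathrm{Id}-P_V$ and nothing more. Your appeal to ``the pseudo-Anosov hypothesis forces $u\cdot\mathrm{Id}-P_E$ to be non-degenerate'' shows $Z_E$ is torsion but does not give injectivity of $Z_V\hookrightarrow Z_E$; that equivariance diagram between the switch map and the $P_E$, $P_V$ actions, and the determination of the kernel, is precisely where the work in McMullen's proof lies. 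Second, the identity $\mathrm{Fit}_0(Z_E)=\mathrm{Fit}_0(Z_V)\cdot\mathrm{Fit}_0(T(\widetilde{\mathcal{L}}))$ is stated too strongly: for a short exact sequence of finitely generated modules one only has the containment $\mathrm{Fit}_0(Z_V)\cdot\mathrm{Fit}_0(T)\subseteq\mathrm{Fit}_0(Z_E)$ in general. What you actually need, and what is true, is multiplicativity of the $\gcd$ (the order ideal, or divisor class) for torsion modules over the UFD $\Z[G]$ in a short exact sequence; since $\Theta_F$ is by definition the $\gcd$ of the Fitting ideal, this weaker statement suffices and should replace the ideal-theoretic equality.
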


\subsection{Train track morphisms} We begin with a classical definition (see {\em e.g.}~\cite{Los}).

 \begin{definition}
A map $T$ between two train tracks $(\tau,h)$ and $(\tau',h')$ is a \emph{train-track morphism} if it is 
cellular and preserves the smooth structure. If in addition $(\tau,h)$ and $(\tau',h')$ are isomorphic as train tracks we call $T$ a \emph{train track map}.\\
A train-track morphism $T:\tau\to\tau'$ is a {\em representative} of $[f]\in{\rm Mod(\Sigma)}$ if in addition
\begin{enumerate}
\item The diagram
$$
\begin{CD}
\tau @>h>> \Sigma\\
@VVT V @VV f V\\
\tau' @>h'>> \Sigma
\end{CD}
$$
commutes, up to isotopy, and 
\item $f\circ h(\tau) \subset N(h'(\tau'))$ and $f\circ h(\tau)$ is transverse to the tie foliation of $h'(\tau')$.
\end{enumerate}
\end{definition}
To any train track morphism $T$ one can associate an incidence matrix $M(T)\in \mathrm{GL}(\Z^\mathbb{A})$ in the following way: 
for any pair $(e,e')$ with labels $(\alpha,\beta)$ {\em i.e.} ($\varepsilon(e)=\alpha$ and $\varepsilon(e')=\beta$) 
we define $M(T)_{\alpha,\beta}$ as the number of occurrences of $e'$ in the edge path 
$T(e)$. It is clear from the definitions that if $T:\tau\to\tau$ is the representative map of a 
homeomorphism $f: \Sigma\to\Sigma$ and if $f(\tau) \prec \tau$ then the incidence matrix 
$M(f)$ defined  in the preceding section and the incidence matrix $M(T)$ are equal.

\begin{theorem}\cite{PaPe}
\label{thm:PaPe}
Let $\psi$ be pseudo-Anosov homeomorphism and let $\mathcal F^{u}$ be in the 
class of its unstable foliation. There exists a train track $\tau$ suited to
$\mathcal F^{u}$ so that $\psi(\tau) \prec \tau$. Furthermore $\psi(\tau)$ 
is isotopic to a train track $\tau' \subset N(\tau)$ which is transverse to the ties
so that the matrix describing the linear map from the space of weights on real
edges of $\tau'$ to the space of weights on real edges of $\tau$ is primitive irreducible
({\em i.e.} some iterate all of whose entries are strictly positive).
\end{theorem}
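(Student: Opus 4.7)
The plan is to construct $\tau$ directly from the dynamics of $\psi$, then deduce primitivity from the expanding, minimal nature of the unstable foliation.

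\emph{Step 1 (producing an initial $\tau$ suited to $\mathcal F^u$).} I would start from a pseudo-Anosov representative of $[\psi]$ with transverse invariant measured foliations $(\mathcal F^u, \mathcal F^s)$. Applying the construction already recalled in Section~\ref{sec:tt}, I choose polygonal neighborhoods $\Delta_p$ around each singularity $p$ of $\mathcal F^u$ whose sides are arcs of leaves of $\mathcal F^s$, and collapse the resulting partial foliation on $\Sigma \setminus \bigcup_p \Delta_p$ along its leaves. This yields a train track $\tau_0$ suited to $\mathcal F^u$, with real edges corresponding to the rectangles between consecutive singular sets and infinitesimal edges going around the punctures.

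\emph{Step 2 (arranging $\psi(\tau) \prec \tau$).} In general $\psi(\tau_0)$ need not be carried by $\tau_0$, but both are suited to the \emph{same} $\mathcal F^u$, so they differ by a finite sequence of elementary splitting/collapse moves. I would refine $\tau_0$ by the sequence of splittings forced by $\psi$: at each step, split the largest-weight branch in the way dictated by the local action of $\psi$ on $\mathcal F^u$. Because $\psi$ expands the transverse measure of $\mathcal F^u$ by the factor $\lambda>1$ and there are only finitely many combinatorial isotopy classes of train tracks suited to $\mathcal F^u$, this splitting sequence is eventually periodic. Choosing $\tau$ at the start of a period of length $1$ yields $\psi(\tau) \prec \tau$, and by the very definition of carrying, $\psi(\tau)$ isotopes into $N(\tau)$ transversely to the ties, giving the required $\tau' \subset N(\tau)$.

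\emph{Step 3 (primitivity on real edges).} Let $M_{\mathrm{real}}$ be the restriction of the incidence matrix to $E(\tau)_{\mathrm{real}}$. Since $\mathcal F^u$ is minimal and $\psi$ multiplies the transverse measure carried by every real edge by $\lambda$, for each real edge $e$ there exists $N=N(e)$ such that the edge path $\psi^N(e)$, viewed inside $N(\tau)$, traverses every real edge of $\tau$. Uniformity (i.e.\ one $N$ working for all $e$) comes from the finiteness of $E(\tau)_{\mathrm{real}}$. Translated into matrices, the entry $\bigl((M_{\mathrm{real}})^N\bigr)_{e,f}$ is exactly the number of occurrences of $f$ in $\psi^N(e)$, so $(M_{\mathrm{real}})^N$ has all entries strictly positive, which is primitive irreducibility. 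The leading eigenvalue then coincides, by Perron--Frobenius, with $\lambda(\psi)$.

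The main obstacle is Step~2: one has to verify at each splitting that the new $\tau_i$ remains suited to $\mathcal F^u$ (no smooth bigon or smooth annular complementary region appears), that infinitesimal edges around punctures are preserved rather than collapsed, and that the splitting sequence is eventually periodic -- this requires a compactness argument on projectivized weight spaces combined with the finiteness of combinatorial types. Step~3 is also delicate because the full matrix $M$ may fail to be primitive due to the infinitesimal edges (which can produce unit eigenvalues from the peripheral monodromies), so the argument must carefully isolate the real block $M_{\mathrm{real}}$ from the infinitesimal one.
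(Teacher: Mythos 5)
The paper does not prove this theorem; it cites it directly from Papadopoulos--Penner \cite{PaPe}, so the comparison must be with the standard proof in the literature rather than with an argument inside this paper.

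Your Steps 1 and 3 are essentially sound. The genuine gap is Step 2. You want a train track $\tau$ suited to $\mathcal{F}^u$ with $\psi(\tau)\prec\tau$ and propose to find it by running a splitting sequence $\tau_0\supset\tau_1\supset\cdots$ dictated by $\psi$ and arguing it is ``eventually periodic.'' What you actually need is $\psi$-\emph{equivariance} of that sequence: some $m,k$ with $\tau_{m+k}$ isotopic to $\psi(\tau_m)$. What finiteness of combinatorial types plus pigeonhole gives is only \emph{recurrence}: $\tau_{m+k}$ isotopic to $g\cdot\tau_m$ for some mapping class $g$, and there is no a priori reason for $g$ to be a power of $\psi$. (Moreover, there are only finitely many combinatorial types of train tracks up to the full mapping class group, not finitely many isotopy classes suited to a given $\mathcal{F}^u$ --- one can split indefinitely.) Upgrading recurrence to $\psi$-equivariance is precisely Agol's theorem on the eventual periodicity of maximal splitting sequences, a nontrivial 2011 result whose proof rests on the uniqueness of the maximal splitting move and a careful analysis of the induced maps on the weight cones; your appeal to ``compactness plus finiteness'' does not deliver it.

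The classical proof (the one behind \cite{PaPe}, and in the FLP expos\'es) sidesteps the splitting sequence entirely. One takes a Markov partition $\{R_1,\dots,R_n\}$ of the surface into birectangles adapted to $(\mathcal{F}^u,\mathcal{F}^s)$, with $\psi(R_i)$ meeting each $R_j$ in full-height subrectangles. Collapsing each $R_i$ along $\mathcal{F}^s$ produces a train track $\tau$ suited to $\mathcal{F}^u$ with one real branch per rectangle; the Markov property is exactly the statement $\psi(\tau)\prec\tau$ with $\psi(\tau)$ transverse to the ties in $N(\tau)$; and the incidence matrix on real branches is the Markov transition matrix, whose primitivity is the topological mixing of the pseudo-Anosov. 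This gets $\tau$, the carrying, and the primitivity in one construction. Your Step 3 intuition (minimality forces $\psi^N(e)$ to sweep every real branch for $N$ uniform) is the right idea and is cleanest in this picture, since the $(e,f)$-entry of $M_{\mathrm{real}}^N$ is literally the number of crossings of $\psi^N(R_e)$ through $R_f$. To repair the write-up you should either replace Step 2 by the Markov-partition construction or cite Agol's periodicity theorem explicitly.
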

 
\subsection{Elementary operations} 

One of the main difficulties to use the aforementioned formulas (computing matrix $M(\psi)$ and Formula~\eqref{E:TPform}) is 
that $\psi(\tau)$ (or $\widetilde{\psi}(\ttau)$) might look very complicated so that the isotopy needed to embed this 
train track in a tie neighborhood of $\tau$ transverse to the ties might be difficult to find. There is 
a general strategy that will simplify calculation, involving two natural (dual) operations defined on a train track
and usually called \emph{folding} and \emph{splitting}. \medskip

Roughly speaking, they are defined by folding or splitting a fibered neighborhood $N(\tau)$ along a cusp. 
For a more precise definition, see~\cite{Los,PaPe} (for splitting operation) and~\cite{KLS}
(for folding operation). In this paper we shall make use of {\em folding operation} 
which will produce from a train track $\tau$ a new train track $\tau'$ with the property that
$\tau \prec \tau'$. We now describe briefly the combinatorial folding operations. Observe that 
these operations first appear as (dual) right/left splits described in~\cite{PaPe}. \medskip

Let $\tau$ be a train track. Let $e_1,e_2$ be two edges of $\tau$ that are issued from the same vertex $v_1$ 
and that form a cusp $C$. We assume that one of the two edges is not infinitesimal (say it is $e_1$).
We describe the folding where edge {\em $e_1$ is folded onto edge $e_2$} (the other case being similar). 
The edge $e_2$ (respectively, $e_1$) is incident to two  vertices $v_1$ and $v_2$ (respectively, $v_1$ and $v_3$).
The orientation around $v_1$ determines an edge $e$ attached to $v_2$ (see Figure~\ref{fig:folding}). If the cusp
determined by $e$ is on the same side as the cusp $C$ then we cannot fold $e_1$ onto $e_2$. In the
other case we form a new graph $\tau'$ in the following way: we identify the edges $e_2$ and $e_1$ so that the new
graph we obtain has a new edge: $e'_1$ from $v_3$ to $v_2$. If $e$ is an infinitesimal edge we then fold again $e'_1$ on $e$. 
The new train track $(\tau',h')$ naturally inherits a labelling $\varepsilon'$ induced from the one of $\tau$: every edge of $\tau'$
share the same label than the corresponding edge of $\tau$.

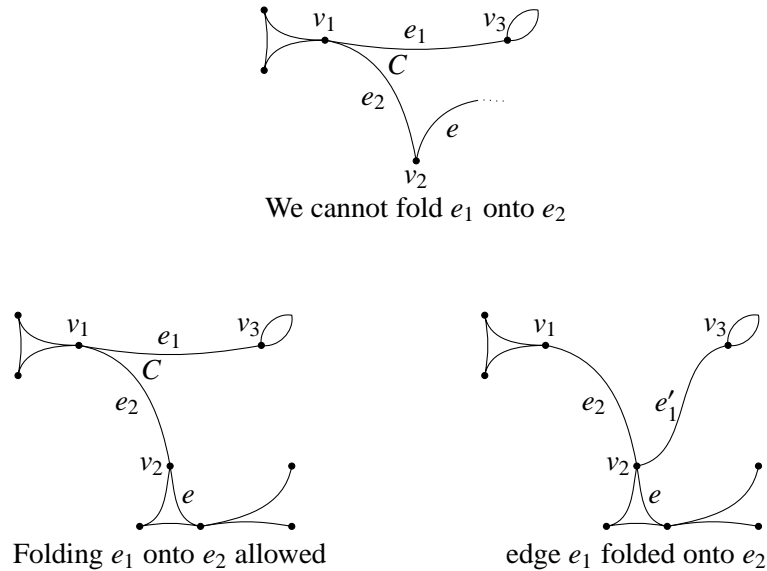
\begin{figure}[htbp]
\begin{minipage}[l]{0.23\linewidth}
\centering
\begin{tikzpicture}[scale=0.8]
\draw[smooth] (-5,2.5) to[out=280,in=180] (-4,2) to[out=180,in=80](-5,1.5) to[out=80,in=280] (-5,2.5) ;
\draw[smooth] (-4,2) to[out=350,in=190] (-1,2);
\draw[smooth] (-1,2) to[out=100,in=170] (-0.5,2.5) to[out=280,in=350] (-1,2);
\draw[smooth] (-4,2) to[out=350,in=100] (-2.5,0) to[out=80,in=190] (-1.5,1);
\draw[smooth, dotted] (-1.5,1) to[out=10,in=190] (-1,1);
\draw [fill=black] (-5,2.5) circle (0.05);
\draw [fill=black] (-5,1.5) circle (0.05);
\draw [fill=black] (-4,2) circle (0.05);
\draw [fill=black] (-1,2) circle (0.05);
\draw [fill=black] (-2.5,0) circle (0.05);
\draw (-4,2.3) node{$v_{1}$};
\draw (-2.5,2.1) node{$e_{1}$};
\draw (-1.2,2.3) node{$v_{3}$};
\draw (-3.2,1) node{$e_{2}$};
\draw (-2.5,-.3) node{$v_{2}$};
\draw (-2.8,1.6) node{$C$};
\draw (-1.9,.5) node{$e$};
\draw (-2.5,-.8) node{We cannot fold $e_1$ onto $e_2$};
\end{tikzpicture}
\end{minipage}
\vskip 10mm
\begin{minipage}[l]{0.30\linewidth}
\centering
\begin{tikzpicture}[scale=0.8]
\draw[smooth,xshift=5.5cm] (-5,2.5) to[out=280,in=180] (-4,2) to[out=180,in=80](-5,1.5) to[out=80,in=280] (-5,2.5) ;
\draw[smooth,xshift=5.5cm] (-4,2) to[out=350,in=190] (-1,2);
\draw[smooth	,xshift=5.5cm] (-1,2) to[out=100,in=170] (-0.5,2.5) to[out=280,in=350] (-1,2);
\draw[smooth,xshift=5.5cm] (-4,2) to[out=350,in=100] (-2.5,0);
\draw[smooth] (3,0) to[out=280,in=170] (3.5,-1) to[out=170,in=10] (2.5,-1) to[out=10,in=260] (3,0);
\draw[smooth] (3.5,-1) to[out=10,in=260] (5,0);
\draw[smooth] (3.5,-1) to[out=10,in=170] (5,-1);
\draw [fill=black,xshift=5.5cm] (-5,2.5) circle (0.05);
\draw [fill=black,xshift=5.5cm] (-5,1.5) circle (0.05);
\draw [fill=black,xshift=5.5cm] (-4,2) circle (0.05);
\draw [fill=black,xshift=5.5cm] (-1,2) circle (0.05);
\draw [fill=black,xshift=5.5cm] (-2.5,0) circle (0.05);
\draw [fill=black] (2.5,-1) circle (0.05);
\draw [fill=black] (3.5,-1) circle (0.05);
\draw [fill=black] (5,0) circle (0.05);
\draw [fill=black] (5,-1) circle (0.05);
\draw[xshift=5.5cm] (-4,2.3) node{$v_{1}$};
\draw[xshift=5.5cm] (-2.5,2.1) node{$e_{1}$};
\draw[xshift=5.5cm] (-1.2,2.3) node{$v_{3}$};
\draw[xshift=5.5cm] (-3.2,1) node{$e_{2}$};
\draw[xshift=5.5cm] (-2.8,0) node{$v_{2}$};
\draw[xshift=5.5cm] (-2.8,1.6) node{$C$};
\draw (3.3,-0.5) node{$e$};
\draw (3,-1.5) node{Folding $e_1$ onto $e_2$ allowed};
\end{tikzpicture}
\end{minipage}
\hskip 10mm
\begin{minipage}[l]{0.30\linewidth}
\centering
\begin{tikzpicture}[scale=0.8]
\draw[smooth,xshift=5.5cm,yshift=-5cm] (-5,2.5) to[out=280,in=180] (-4,2) to[out=180,in=80](-5,1.5) to[out=80,in=280] (-5,2.5) ;
\draw[smooth,xshift=5.5cm,yshift=-5cm] (-1,2) to[out=100,in=170] (-0.5,2.5) to[out=280,in=350] (-1,2);
\draw[smooth,xshift=5.5cm,yshift=-5cm] (-4,2) to[out=350,in=100] (-2.5,0);
\draw[smooth,yshift=-5cm] (3,0) to[out=280,in=170] (3.5,-1) to[out=170,in=10] (2.5,-1) to[out=10,in=260] (3,0);
\draw[smooth,yshift=-5cm] (3.5,-1) to[out=10,in=260] (5,0);
\draw[smooth,yshift=-5cm] (3.5,-1) to[out=10,in=170] (5,-1);
\draw[smooth,yshift=-5cm] (3,0) to[out=10,in=190] (4.5,2);
\draw [fill=black,xshift=5.5cm,yshift=-5cm] (-5,2.5) circle (0.05);
\draw [fill=black,xshift=5.5cm,yshift=-5cm] (-5,1.5) circle (0.05);
\draw [fill=black,xshift=5.5cm,yshift=-5cm] (-4,2) circle (0.05);
\draw [fill=black,xshift=5.5cm,yshift=-5cm] (-1,2) circle (0.05);
\draw [fill=black,xshift=5.5cm,yshift=-5cm] (-2.5,0) circle (0.05);
\draw [fill=black,yshift=-5cm] (2.5,-1) circle (0.05);
\draw [fill=black,yshift=-5cm] (3.5,-1) circle (0.05);
\draw [fill=black,yshift=-5cm] (5,0) circle (0.05);
\draw [fill=black,yshift=-5cm] (5,-1) circle (0.05);
\draw[xshift=5.5cm,yshift=-5cm] (-4,2.3) node{$v_{1}$};
\draw(3.5,-4) node{$e_{1}'$};
\draw[xshift=5.5cm,yshift=-5cm] (-1.2,2.3) node{$v_{3}$};
\draw[xshift=5.5cm,yshift=-5cm] (-3.2,1) node{$e_{2}$};
\draw[xshift=5.5cm,yshift=-5cm] (-2.8,0) node{$v_{2}$};
\draw (3.3,-5.5) node{$e$};
\draw (3,-6.5) node{edge $e_1$ folded onto $e_2$};
\end{tikzpicture}
\end{minipage}
\caption{The folding operation: edge $e_1$ folded onto $e_2$ produces a new train track.
\label{fig:folding}
}
\end{figure}

\begin{definition}
We will say that a train track $\tau$ refines to a train track $\sigma$ if there exists a sequence
\begin{equation}
	\label{E:refine}
\sigma=\tau_{1}\prec\tau_{2}\prec\cdots\prec\tau_{k-1}\prec\tau_{k}=\tau
\end{equation}
where $\tau_{i}$ is obtained from $\tau_{i-1}$ by a folding operation.
\end{definition}

\begin{proposition}\cite{PaPe}
	\label{P:Pape1}
Suppose that $\mathcal{F}\prec\sigma\prec\tau$ where $\sigma$ is contained in a fibered neighborhood $N(\tau)$ and $\tau$ is suited to $\mathcal{F}$. Then $\tau$ refines to $\sigma$.
\end{proposition}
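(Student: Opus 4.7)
The plan is to induct on a combinatorial measure of the discrepancy between $\sigma$ and $\tau$ sitting inside $N(\tau)$, for example the number of switches of $\sigma$ that do not coincide (in the obvious sense, after isotopy along the ties) with switches of $\tau$. When this quantity is zero, the embedded position of $\sigma$ in $N(\tau)$ identifies $\sigma$ with $\tau$ and the refinement sequence~\eqref{E:refine} terminates; so it suffices to show that whenever $\sigma\neq \tau$ inside $N(\tau)$ one can perform a single folding on $\sigma$, producing a new $\sigma'$ with $\sigma\prec\sigma'\subset N(\tau)$, $\mathcal F\prec\sigma'$, and strictly smaller discrepancy.

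First I would fix the picture. Put $\sigma$ in general position inside $N(\tau)$ transverse to the ties and thicken it to its own fibered neighborhood $N(\sigma)\subset N(\tau)$. The complement $N(\tau)\setminus\mathrm{int}\,N(\sigma)$ is a disjoint union of rectangles and cusped polygons whose boundary alternates arcs on $\partial N(\sigma)$, arcs on $\partial N(\tau)$, and (possibly) ties. The key use of the hypothesis is that $\tau$ is suited to $\mathcal F$ and $\sigma$ carries $\mathcal F$: no leaf of $\mathcal F$ joins two cusps of $N(\tau)$, so every cusp of $N(\tau)$ must be \emph{filled} by $\sigma$, meaning a cusp of $N(\sigma)$ accumulates on it through the corresponding complementary region.

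Second, I would locate a folding site. If $\sigma\neq \tau$, at least one complementary region of $N(\sigma)$ in $N(\tau)$ is not a trivial rectangle, and by the filling property its boundary on $\partial N(\sigma)$ contains two adjacent arcs meeting at a cusp $C$ of $\sigma$, whose underlying edges $e_1,e_2$ issue from a common switch $v_1$ of $\sigma$. Since infinitesimal edges are in canonical bijection with the punctures and are therefore matched one-to-one between $\sigma$ and $\tau$, at least one of $e_1,e_2$ is non-infinitesimal, and the cusp of $\sigma$ on the opposite side of the adjacent edge $e$ lies on the correct side (otherwise the complementary region would close up against a cusp of $N(\tau)$, contradicting the filling property). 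This is precisely the configuration allowing a folding of $e_1$ onto $e_2$ as in Figure~\ref{fig:folding}, with the induced labelling. The resulting $\sigma'$ is still embedded in $N(\tau)$ transverse to the ties, still carries $\mathcal F$, and satisfies $\sigma\prec\sigma'$.

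Third, one checks that the folding strictly decreases the discrepancy, because one cusp of $\sigma$ inside $N(\tau)$ is either pushed onto a switch of $\tau$ or removed. Iterating finitely many times produces the desired sequence $\sigma=\tau_1\prec\tau_2\prec\cdots\prec\tau_k=\tau$.

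The main obstacle is the second step: upgrading the purely geometric observation that a nontrivial complementary region exists into a legal folding. Concretely, one must perform a case analysis of the shapes of complementary regions of $N(\sigma)$ in $N(\tau)$ (especially those touching infinitesimal edges near the punctures of $\mathbf{D}_n$) to guarantee both that a candidate pair $(e_1,e_2)$ with $e_1$ non-infinitesimal exists, and that the cusp structure on the opposite side of $e$ makes the fold admissible in the sense of the folding convention above; together with verifying that the suited-to-$\mathcal F$ condition is preserved, so that the induction hypothesis carries through.
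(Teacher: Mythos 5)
Your plan is in the same broad spirit as the paper's proof---both exploit the fact that, because $\tau$ is suited to $\mathcal{F}$ and $\sigma$ carries $\mathcal{F}$, the cusps of $N(\sigma)$ and of $N(\tau)$ are matched in pairs---but the organization is genuinely different, and you yourself flag the point where your version leaves a gap that the paper's route sidesteps entirely.

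The paper argues dually, splitting $\tau$ down to $\sigma$ rather than folding $\sigma$ up to $\tau$, and the whole refinement sequence is read off at once: by Lemma~2.1 of \cite{PaPe} there exist disjoint embedded arcs $\Gamma_1,\dots,\Gamma_I$ in $N(\tau)\setminus\mathrm{Int}\,N(\sigma)$, transverse to the ties, that pair the cusps of $N(\sigma)$ with those of $N(\tau)$. Each crossing of some $\Gamma_i$ with a singular tie of $N(\tau)$ names exactly one split, and concatenating these in the order they occur along the arcs produces the sequence~\eqref{E:refine}; dualizing gives the foldings. There is no induction and no case analysis.

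Your approach works from the inside out, trying to locate a single admissible fold of $\sigma$ by hand and then inducting. The obstacle you identify in your second step---upgrading ``a nontrivial complementary region exists'' to ``there is a legal fold on $\sigma$ reducing complexity''---is exactly what the arcs $\Gamma_i$ package away, and it is the reason the paper avoids the folding formulation. There is also a subtler problem with your chosen induction measure: the number of switches of $\sigma$ not coinciding with switches of $\tau$ need not decrease under a single fold, since a fold can slide a switch of $\sigma$ past a branch of $\tau$ without landing it on a switch of $\tau$. The quantity that does decrease monotonically is the total number of intersection points of $\bigcup_i\Gamma_i$ with the singular ties of $N(\tau)$, which is implicitly what the paper's proof uses. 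With that change of measure, and with the case analysis you already anticipate in your last paragraph, your outline can be made to work, but the splitting route is cleaner and is the one the paper (following Penner--Papadopoulos) takes.
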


\begin{proof}[Sketch of the proof of Proposition~\ref{P:Pape1}]
We prove the proposition by using splitting instead of folding, it is easier to explain and the corresponding sequence of foldings easy to derive.\\
Up to making an isotopy, one can find a fibered neighborhood $N(\sigma)$ contained in the interior of $N(\tau)$ whose tie foliation is formed by sub arcs of the tie foliation of $N(\tau)$. The number of cusps of $N(\sigma)$ and $N(\tau)$ is the same and one can define a pairing between these two sets of cusps with a family of disjointly embedded arcs $\{\Gamma_{i}\}_{i=1}^{I}$ contained in $N(\tau)\setminus \mathrm{Int}(N(\sigma))$ which are transverse to the ties 
(\cite[Lemma 2.1]{PaPe}). The sequence of splittings that defines the refinement is obtained by cutting $N(\sigma)$  along $\Gamma_{i}$, $i=1,\ldots,n$. Each time the arc $\Gamma_{i}$ crosses a singular tie of $N(\tau)$, the cutting along $\Gamma_{i}$ defines a splitting operation on $\tau$. The concatenation of these operations produces 
the sequence~\eqref{E:refine}.
\end{proof}

The previous proposition has a simple but important consequence: the refinement of 
$\tau$ to $\psi(\tau)$ allows us to factorize the incidence matrix $M(\psi)$ as a product of 
matrices associated to folding operations. In the sequel we explain how this can be done.

\subsection{Folding operations and train track morphisms}

Each folding operation from a train track $(\tau,h,\varepsilon)$ to a train track $(\tau',h',\varepsilon')$ produces a train track morphism $T:\tau\to\tau'$ that represents 
some $[f]\in{\rm Mod(\Sigma)}$ such that $f(h(\tau))\prec h'(\tau')$. Hence our preceding discussions
can be reformulated as follows.
\begin{lemma}[Penner-Papadoupoulos~\cite{PaPe}]
\label{L:factor}
Every (labeled) train track map representing a class $[f]\in{\rm Mod (\Sigma)}$ is obtained by a finite sequence of 
train track maps induced by folding operations and then followed by a relabeling operation.
\end{lemma}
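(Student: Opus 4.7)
The plan is to apply Proposition~\ref{P:Pape1} to the pair $\sigma=f(\tau)\prec\tau$ and then translate the resulting sequence of foldings into a composition of train-track morphisms, matching the labels at one end by a relabelling induced by $f$.

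First, let $T\colon\tau\to\tau$ be a labelled train-track map representing $[f]\in\mathrm{Mod}(\Sigma)$. The very definition of a representative forces $f\circ h(\tau)\subset N(h(\tau))$ transversally to the ties, i.e.\ $f(\tau)\prec\tau$. Set $\sigma:=f(\tau)$, viewed as a train track inside a fibered neighbourhood of $\tau$. Since $\tau$ is suited to the unstable foliation $\mathcal{F}^{u}$ of $f$ and $\mathcal{F}^{u}$ is $f$-invariant, one has the chain $\mathcal{F}^{u}\prec\sigma\prec\tau$, which meets the hypothesis of Proposition~\ref{P:Pape1}. That proposition provides a refinement
$$
\sigma=\tau_{1}\prec\tau_{2}\prec\cdots\prec\tau_{k}=\tau,
$$
each step of which is a single folding $\tau_{i-1}\to\tau_{i}$.

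By the discussion preceding the lemma, every such folding $\tau_{i-1}\prec\tau_{i}$ carries a canonical train-track morphism $T_{i}\colon\tau_{i-1}\to\tau_{i}$. Composing yields a morphism $\mathcal{T}:=T_{k}\circ\cdots\circ T_{1}\colon\sigma\to\tau$. To produce a map with source $\tau$ I would use that $f\colon\Sigma\to\Sigma$ is a homeomorphism and so identifies $\tau$ and $\sigma=f(\tau)$ as abstract labelled train tracks: the induced isomorphism $R\colon\tau\to\sigma$ is nothing but the permutation of the alphabet $\mathbb{A}$ obtained by pushing labels along $f$, i.e.\ a relabelling operation in the sense of the lemma. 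The composite $\mathcal{T}\circ R\colon\tau\to\tau$ is then a train-track map from $\tau$ to itself built as a finite sequence of folding morphisms composed with a relabelling, which is the desired factorisation.

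The one point needing care---and the expected main obstacle---is to verify that $\mathcal{T}\circ R$ is isotopic to the original $T$, hence represents the same $[f]$. This is a bookkeeping argument: each folding in the refinement produced by Proposition~\ref{P:Pape1} is realised geometrically by cutting $N(\sigma)$ along one of the disjointly embedded arcs $\Gamma_{j}\subset N(\tau)\setminus\mathrm{Int}(N(\sigma))$, so each $T_{i}$ represents the identity of $\Sigma$. Consequently $\mathcal{T}$ represents the identity morphism $\sigma\to\tau$, and composing with $R$ (which encodes the action of $f$ on the labels) recovers exactly the mapping class $[f]$. Since two labelled train-track morphisms with the same source, target and represented mapping class are isotopic, one concludes $T\simeq\mathcal{T}\circ R$, finishing the proof.
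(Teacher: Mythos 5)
Your argument is essentially the one the paper sketches in the paragraph that follows the lemma (the paper itself only cites \cite{PaPe} and gives no standalone proof): apply Proposition~\ref{P:Pape1} to the pair $f(\tau)\prec\tau$, convert the resulting refinement into a chain of folding morphisms, and close the loop with a relabelling. Two small points deserve attention. First, you invoke the unstable foliation $\mathcal{F}^{u}$ of $f$, which tacitly restricts to the pseudo-Anosov case; the lemma is stated for arbitrary $[f]\in\mathrm{Mod}(\Sigma)$ with $f(\tau)\prec\tau$, so in that generality one should instead choose \emph{any} measured foliation $\mathcal{F}$ carried by $\sigma=f(\tau)$ and check that $\tau$ is suited to it (this is the form in which Penner--Papadopoulos actually use the hypothesis, and in the pseudo-Anosov case $\mathcal{F}=\mathcal{F}^{u}$ is simply the natural choice). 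Second, the lemma's phrase ``then followed by a relabeling operation'' is reflected in the paper's convention $T = R\circ T_{k}\circ\cdots\circ T_{1}$ with $R$ applied last, whereas your construction puts the relabelling first as $\mathcal{T}\circ R$. The two factorizations are equivalent up to conjugation by $R$, but to match the statement literally you should read the refinement $\sigma=\tau_{1}\prec\cdots\prec\tau_{k}=\tau$ as a sequence of morphisms whose source is $\tau$ (identified abstractly with $\psi(\tau)$ since $\tau$ is $\psi$-invariant) and append $R$ at the end to restore the original labelling. Your final uniqueness appeal --- that two labelled train-track morphisms $\tau\to\tau$ representing the same $[f]$ and carried in $N(\tau)$ transversally to the ties are isotopic --- is correct, since the edge paths $T(e)$ are read off from the intersections with central ties, which are isotopy invariants.
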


Hence to any pseudo-Anosov class $[\psi]$ and any invariant train track $\tau$, one can define
a (non unique) sequence of folding operations defined by the refinement sequence
$$
\psi(\tau)=\tau_{1}\prec\tau_{2}\prec\cdots\prec\tau_{k-1}\prec\tau_{k}=\tau
$$
The sequence of folding operations defines a sequence of train track maps:
$$
\psi(\tau)=(\tau_{1},\varepsilon_1)\stackrel{T_{1}}\longrightarrow(\tau_{2},\varepsilon_{2})\stackrel{T_{2}}{\longrightarrow}\cdots
\stackrel{T_{k-1}}\longrightarrow(\tau_{k},\varepsilon_{k}) \stackrel{T_{k}}\longrightarrow(\tau_{1},\varepsilon_{k+1})\stackrel{R}\longrightarrow(\tau_{1},\varepsilon_1)
$$
Here $R$ is just a relabeling map. Therefore Lemma~\ref{L:factor} in this context implies that $T=R\circ T_{k}\circ T_{k-1}\circ\cdots\circ T_{2}\circ T_{1}$.
We draw the incidence matrix $M(\psi)$ or $M(T)$ as
\begin{equation}
	\label{E:pross}
M(T) = M(R\circ T_{k}\circ T_{k-1}\circ\cdots\circ T_{2}\circ T_{1}) = M(T_{1})M(T_{2})\cdots M(T_{k})M(R).
\end{equation}

\begin{remark}
Observe that since we work with {\em labelled train tracks}, all the transition matrices $M(T_i)$ have the 
form $\mathrm{Id}+E$ where $E$ is a non negative matrix.
\end{remark}

To summarize, to each pseudo-Anosov homeomorphisms, one can associate a train track
and a sequence of folding operations such that the corresponding product of matrices is irreducible
{\em i.e.} it has some power such that every entry has positive coefficients
(Theorem~\ref{thm:PaPe}). In general the converse is not true, however under mild assumption one has:

\begin{theorem}
\label{thm:converse}
Let $\tau=\tau_{1}\prec\tau_{2}\prec\cdots\prec\tau_{k-1}\prec\tau_{k}=\tau$ be a refinement sequence defined by folding operations
such that the corresponding incidence matrix is irreducible and the Perron-Frobenius eigenvector satisfies the 
switch conditions of the train track $\tau$. Then the train track map $T$ associated to this sequence 
is the representative of a pseudo-Anosov homeomorphism.
\end{theorem}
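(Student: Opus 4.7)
The plan is to use the Perron--Frobenius machinery to construct a pair of transverse invariant measured foliations on $\Sigma$ and then to invoke the classical criterion for pseudo-Anosov maps. First, the composition of the folding maps together with the relabeling $R$ produces a self-map $T:\tau\to\tau$ that represents a mapping class $[\psi]\in \mathrm{Mod}(\Sigma)$ with $\psi(\tau)\prec\tau$. Its incidence matrix $M:=M(T)$ equals the product $M(T_1)M(T_2)\cdots M(T_k)M(R)$; since each $M(T_i)$ has the form $\mathrm{Id}+E_i$ with $E_i\geq 0$ (see the remark after~\eqref{E:pross}), irreducibility of $M$ forces it not to be a permutation matrix, and Perron--Frobenius yields a unique (up to scalar) positive eigenvector $w$ with leading eigenvalue $\lambda>1$.

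Next I would use $w$ to build the unstable foliation. By hypothesis the weights $w$ satisfy the switch conditions of $\tau$, so $(\tau,h,w)$ is a measured train track and thus defines, via the standard rectangle-and-collapse procedure recalled in Section~\ref{sec:tt}, a measured foliation $\mathcal{F}^u$ on $\Sigma$ carried by $\tau$. The action of $T$ on edge weights is $T^*w=Mw=\lambda w$, so $\psi$ preserves the projective class of $\mathcal{F}^u$ and multiplies its transverse measure by $\lambda>1$. Irreducibility of $M$ rules out an invariant proper subsurface supporting $\mathcal{F}^u$, and so $\mathcal{F}^u$ fills $\Sigma$.

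The stable foliation is produced dually: applying Perron--Frobenius to $M^T$ (irreducible with the same eigenvalue $\lambda$) yields a positive left eigenvector $v$, which, interpreted as widths on the ties of the fibered neighborhood $N(\tau)$, produces a measured foliation $\mathcal{F}^s$ that is transverse to $\mathcal{F}^u$ and contracted by $\psi$ with factor $\lambda^{-1}$. With the pair $(\mathcal{F}^u,\mathcal{F}^s)$ of $\psi$-invariant, transverse, filling measured foliations with expansion/contraction factors $\lambda$ and $\lambda^{-1}$, Thurston's classification theorem for surface homeomorphisms implies that $[\psi]$ is pseudo-Anosov with stretch factor $\lambda$.

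The main obstacle is the construction of $\mathcal{F}^s$ and the verification that it is everywhere transverse to $\mathcal{F}^u$: the width interpretation of the left eigenvector has to be made compatible with the cusp structure at each vertex of $\tau$, which is where the combinatorics of the folding sequence intervenes most delicately. An alternative I would consider, and which avoids treating $\mathcal{F}^s$ by hand, is to invoke the converse criterion implicit in the work of Bestvina--Handel~\cite{BH} and Penner--Papadopoulos~\cite{PaPe}: a train track self-map whose transition matrix is irreducible and whose Perron--Frobenius eigenvector satisfies the switch conditions of the underlying train track automatically represents a pseudo-Anosov mapping class, with stretch factor equal to the Perron--Frobenius eigenvalue.
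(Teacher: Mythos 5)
The paper states Theorem~\ref{thm:converse} without proof, so there is no in-house argument to compare against; the theorem is presented as a standard converse in the spirit of Penner--Papadopoulos~\cite{PaPe} and Bestvina--Handel~\cite{BH}. Your sketch is the right, standard approach: right Perron--Frobenius eigenvector $\Rightarrow$ unstable transverse measure; left eigenvector $\Rightarrow$ widths/ties $\Rightarrow$ stable transverse measure; then appeal to the definition of pseudo-Anosov (not to Thurston's classification, which is an overkill here -- you are constructing the invariant pair directly, not excluding the other two cases).

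Two points deserve tightening. First, the claim ``irreducibility of $M$ forces it not to be a permutation matrix'' is false: a cyclic permutation matrix is irreducible, so irreducibility alone does not give $\lambda>1$. What actually rules out the permutation case is that at least one folding in the sequence is nontrivial, i.e.\ some $E_i\neq 0$ in $M(T_i)=\mathrm{Id}+E_i$; then $\prod_i(\mathrm{Id}+E_i)M(R)$ has a row whose sum exceeds $1$, hence is not a permutation matrix, and for an irreducible nonnegative integer matrix that is not a permutation one indeed has $\lambda>1$. This hypothesis (a genuinely nontrivial fold) is implicit in the theorem and should be named, because without it the statement is literally false (a permutation matrix with an all-ones eigenvector satisfying the switch conditions would represent a periodic map). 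Second, the filling argument should be rephrased: the measured foliation built from a strictly positive weight vector on a train track $\tau$ whose complementary regions are cusped polygons or boundary-parallel annuli automatically has those same complementary regions, hence fills; positivity of the weights is what Perron--Frobenius gives, and irreducibility of $M$ is used to get positivity, not to exclude invariant subsurfaces directly. The delicate step you flag -- turning the width eigenvector into a transverse measured foliation and checking transversality to $\mathcal{F}^u$ at cusps -- is exactly where~\cite{PaPe} do the work, and citing their criterion as you do in the last paragraph is the appropriate way to close the argument at the level of rigor the paper itself adopts.
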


\subsection{Elementary operations and standardness} 
As we have seen in the preceding paragraphs, every train track map $T:\tau \longrightarrow \tau$ representing a class in ${\rm Mod}(\Sigma)$ is the product of train track maps defined by elementary operations. When $\Sigma$ is the $n$-punctured disc $\mathbf{D}_{n}$ this product can be refined by asking that every train track in~\eqref{E:refine} is \emph{standardly embedded}. Since all the calculations that we present in the present paper are 
described in this context, we will discuss these notions in detail. \medskip

In \S \ref{SUBSEC:INVARIANTTT} we made the convention that the $n$-letters in $\mathbb{A}_{\rm prong}$ label the infinitesimal edges enclosing punctures of $\mathbf{D}_{n}$, hence any labeling using these letters defines a labeling of the punctures of $\mathbf{D}_{n}$. We consider $\mathbf{D}_{n}$ to be modeled on the unit disc in $\mathbf{C}$ with $n$ punctures along the real line $\mathbf{R}$. 
Let $l_{\alpha}$ be a vertical segment joining the puncture labeled by $\alpha\in\mathbb{A}_{\rm prong}$ 
to the boundary of the disc. Now consider a train track $h:\tau\hookrightarrow\mathbf{D}_{n}$. If all the edges except these infinitesimal edges are embedded in the upper (or lower) half disc, then we say that $(\tau,h)$ is \emph{standard}. If only one open real edge of $h(\tau)$ intersects only once $\cup l_{i}$, 
and all 

the other real edges are embedded in the upper (or lower) half disc, then we say that $(\tau,h)$ is \emph{almost standard}. These notions were first introduced in \cite{KLS}.

We consider $B_{n}$ the n-th braid group with standard generators $\sigma_{1},\ldots, \sigma_{n-1}$ and consider the natural map 
$B_{n}\longrightarrow{\rm Mod}(\mathbf{D}_{n})$ which associates to each braid $\beta$ the mapping class $f_{\beta}$. 
If $(\tau,h)$ is standard and we perform a folding operation on $h(\tau)$, then the resulting train track $(\tau_{1},h_{1})$ is
either standard or almost standard. In the latter situation we can easily turn $(\tau_{1},h_{1})$ into a standard marking.
\begin{lemma}~\cite{KLS}
	\label{Lemma:StantardizingBraid}
	Let $(\tau,h)$ be an almost standard train track in $\mathbf{D}_{n}$. Then there exist some $n$-braid of the form $\delta^{\pm}_{[l,m]}=(\sigma_{m-1}\sigma_{m-2}\cdots\sigma_{l})^{\pm}$ (called standardizing braid) such that $(\tau_{1},f_{\beta}\circ h_{1})$ is standard. 
\end{lemma}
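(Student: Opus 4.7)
The plan is to read off the standardizing braid directly from the combinatorial data of the offending edge. By the almost-standard hypothesis there is a unique real edge $e^{\ast}$ of $h(\tau)$ that meets $\cup_i l_i$, and it meets this union in a single transverse point, say on $l_{i_0}$. Walking along $e^{\ast}$, one endpoint sits at a puncture $p_l$ whose incident portion of $e^{\ast}$ is in the upper half-disc, and the other at a puncture $p_m$ whose incident portion is in the lower half-disc. Re-indexing if necessary we may assume $l<m$, and transversality of the intersection forces $l\le i_0 <m$. By hypothesis, all other real edges are already embedded in the closed upper half-disc.

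I would then take $\beta=\delta_{[l,m]}^{\pm 1}$ and realize $f_{\beta}\in{\rm Mod}(\mathbf{D}_n)$ by a homeomorphism supported in a small disc neighborhood $U$ of the arc $[p_l,p_m]\subset\mathbf{R}$ together with a thin half-disc above it. Geometrically $f_{\delta_{[l,m]}}$ drags $p_l$ along a half-twist arc running over $p_{l+1},\dots,p_m$ to position $m$, while sliding each intermediate puncture one slot to the left; its inverse performs the mirror motion below. A direct inspection shows that for exactly one of the two signs, the lower-half portion of $e^{\ast}$ is carried through $U$ into the upper half-disc, with endpoints that now match the permuted labels $p_l,p_m$ consistently, and with the unique crossing with $\cup_i l_i$ removed. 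Since $U$ can be shrunk so as to avoid every other real edge, the embedding of the rest of $\tau$ is unchanged up to isotopy, so every real edge of $f_{\beta}\circ h$ lies in the closed upper half-disc, i.e.\ $(\tau,f_{\beta}\circ h)$ is standard.

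The substantive content is a finite case analysis: one must determine which endpoint of $e^{\ast}$ plays the role of $p_l$ (the "upper" side) versus $p_m$ (the "lower" side), and one must read off the sign $\pm$ from the orientation of the descent of $e^{\ast}$ across $l_{i_0}$. Only four configurations can occur, corresponding to (side of the lower arc) $\times$ (order of the endpoint labels), and each is standardized by precisely one of $\delta_{[l,m]}^{\pm 1}$. The main obstacle, and the only point that requires care, is to verify that a \emph{single} braid of this very restrictive shape (rather than a general product of $\sigma_i^{\pm 1}$) suffices; this is exactly the observation that underlies the notion of almost-standardness in~\cite{KLS}, and it relies in an essential way on the fact that only one real edge crosses $\cup_i l_i$ and does so only once.
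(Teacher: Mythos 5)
First, note that the paper does not actually prove Lemma~\ref{Lemma:StantardizingBraid}; it is quoted from~\cite{KLS}. So you are supplying an argument the paper omits, and the comparison is against the geometry of the situation rather than against a written proof.

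Your overall strategy is the right one: read off the braid $\delta_{[l,m]}^{\pm 1}$ directly from the offending edge and check geometrically that it removes the single crossing with $\cup_i l_i$. But there is a genuine gap in your identification of $l$ and $m$. You assert that, walking along $e^\ast$, one endpoint sits at a puncture $p_l$ with its incident portion in the upper half-disc and the other at a puncture $p_m$ with its incident portion in the lower half-disc. This is not how almost-standard tracks produced by a fold typically look: the lower-half detour of $e^\ast$ is generically \emph{interior} to the edge. Both endpoints of $e^\ast$ are approached from the upper half, and in the middle the edge crosses the real line, wraps once around the bottom of a single puncture $p_{i_0}$, and comes back up. In that configuration neither endpoint has a lower-half incident portion, so your dichotomy never occurs, and your recipe for $(l,m)$ returns the wrong pair. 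The paper's own first $B_3$ example makes this concrete: after the fold $F_{ba}$ in Figure~\ref{isotopy}, the offending edge runs between the vertices at $p_1$ and $p_3$, with its detour looping under $p_1$ in its interior, and the standardizing braid is $\sigma_1^{-1}=\delta_{[1,2]}^{-1}$. Your prescription would output $\delta_{[1,3]}^{\pm 1}=(\sigma_2\sigma_1)^{\pm 1}$, which does not standardize this track.

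The correct bookkeeping must instead be tied to the detour arc itself rather than to the endpoints of $e^\ast$. Concretely: the unique transverse intersection is with $l_{i_0}$, the lower-half arc of $e^\ast$ has both of its real-line endpoints in the two intervals adjacent to $p_{i_0}$, and the indices $l,m$ and the sign are read from $i_0$ together with the side of the new cusp produced by the fold (equivalently, from which side the detour is tangent to the infinitesimal loop at $p_{i_0}$). This is what makes the standardizing braid a half-twist localized near $p_{i_0}$ rather than a twist spanning the two endpoints of $e^\ast$. Your concluding remark, that one must check a single braid of this special form suffices, is exactly the nontrivial point, but the case analysis as stated misses the dominant case (interior detour) and therefore does not close the proof.
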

In this context we say that $f_{\beta}$ is a {\em standardizing homeomorphism} for $(\tau_{1},h_{1})$.

\begin{definition}
\label{def:standardizing}
Any infinitesimal edge around a puncture determines a cusp (enclosing a puncture).
Any standardizing homeomorphism $f_{\beta}$ acts on those edges by a permutation $\pi\in \mathbb{A}_{\mathrm{prong}}$.
Moreover if $e,f$ are two infinitesimal edges (with labeling $\alpha,\beta$ respectively) and if $\pi(\alpha)=\beta$ then $f_\beta$ 
acts as a rotation whose support is contained in a small neighborhood of the punctures. We encode this action by the rotation number
$k\in \Z$ (under the convention that a counterclockwise rotation has positive sign) and we will use 
the notation $\pi(\alpha)=\beta^k$.
\end{definition}

\begin{example}
\label{ex:standardizing}
In Figure~\ref{isotopy} the two standardizing homeomorphisms corresponding to $\sigma^{-1}_1$ and
$\sigma_2$ act on the punctures as follows (we identify punctures and infinitesimal edges enclosing punctures for the labelling):
$$
\pi(\sigma_2) = \begin{pmatrix} A & B & C \\ A & C^{+1} & B \end{pmatrix} \qquad \textrm{and} \qquad \pi(\sigma^{-1}_1) 
= \begin{pmatrix} A & B & C \\ B&A^{-1}&C\end{pmatrix}.
$$
In particular, for any $k\in\Z$, $k>0$, the permutation associated to $\sigma_2^{2k}$ is $\pi(\sigma_2^{2k})=\begin{pmatrix} A & B & C \\ A & B^{k} & C^k \end{pmatrix}$.
\end{example}

\section{Construction of the automaton}
\label{sec:automaton}

In this section, for simplicity, we specify to the case of the punctured disc. However all 
the discussion can be done for surfaces of higher genera. Let us fix $n>2$, the number of 
punctures, and the singularity data of train tracks ({\em i.e.} the number and type of prongs).
We fix an alphabet~$\mathbb{A}$.

\subsection{Graphs of foldings}
	\label{SUBSEC:GraphFolds}
We start with the following observation: the number of 
labelled train tracks $(\tau,h,\varepsilon)$ of $\mathbf{D}_{n}$ where
\begin{itemize}
\item $(\tau,h)$ is standard,
\item $\tau$ has prescribed singularity data and labelling $\varepsilon : E(\tau) \rightarrow \mathbb{A}$
\end{itemize}
(up to isotopy of $\mathbf{D}_{n}$ fixing the punctures) is finite. 

Moreover this set is also (set-wise) invariant by folding operations followed by standardness operations. Finally, given a tuple 
$(\tau,h,\varepsilon)$ into this finite set, since the number of cusps and edges is finite, there are only finitely many possible 
folding operations on $h(\tau)$. These three finiteness ingredients allow us to construct a graph  in the following way.
\begin{enumerate}
\item Vertices are tuples $(\tau,h,\varepsilon_{|E(\tau)_{\mathrm{real}}})$ where $h: \tau \rightarrow \mathbf{D}_{n}$ is standard (up to isotopy
fixing the punctures).
\item There is an edge between $(\tau_1,h_1,\varepsilon_1)$ and $(\tau_2,h_2,\varepsilon_2)$ if there 
is a folding operation from $(\tau_1,h_1,\varepsilon_1)$ to $(\tau_2,h'_1,\varepsilon_2)$ and either:
\begin{enumerate}
\item $h'_1(\tau_2)$ is standard: then $h_2=h'_1$, or
\item $h'_1(\tau_2)$ is almost standard: $h_2=f_{\beta}\circ h'_1$ where $f_\beta$ is a standardizing braid.
\end{enumerate}
\item There is an edge between $(\tau,h,\varepsilon)$ and $(\tau,f_{\beta}\circ h,\varepsilon)$ 
where $\beta\in B_n$ and $f_{\beta}\circ h(\tau)$ is also standard.
\end{enumerate}

The resulting directed graph is called the {\em folding automaton} associated to the number of marked points of $\mathbf{D}_{n}$ and the
type of singularities. Observe that this graph is not necessarily strongly connected (even not connected). It would be 
nice to have a description of the topology of these graphs in general. \medskip

For any train track $(\tau,h,\varepsilon)$ we will denote by $\mathcal{N}^{\mathrm{lab}}(\tau,h,\varepsilon)$ the  
connected component of the folding automaton containing $(\tau,h,\varepsilon)$.
One can also perform the same construction without labelling: the connected components containing $(\tau,h)$ are then 
denoted by $\mathcal{N}(\tau,h)$.

\begin{example}
See Figures~\ref{fg0}-\ref{fg6}-\ref{fig11} for examples of automata.
\end{example}

\subsection{Closed loops in $\mathcal{N}(\tau,h)$ and pseudo-Anosov homeomorphisms}
	\label{SS:ClosedLoopsAutomaton}

The labelling allows us to define for each edge of $\mathcal{N}^{\mathrm{lab}}(\tau,h,\varepsilon)$ a train track map and its associate
transition matrix. Hence given a path $\eta$ in $\mathcal{N}^{\mathrm{lab}}(\tau,h,\varepsilon)$ (not necessarily closed) represented by
$$
(\tau_{1},\varepsilon_1)\stackrel{T_{1}}\longrightarrow(\tau_{2},\varepsilon_{2})\stackrel{T_{2}}{\longrightarrow}\cdots
\stackrel{T_{k-1}}\longrightarrow(\tau_{k},\varepsilon_{k}) \stackrel{T_k}\longrightarrow(\tau_{1},\varepsilon_{k+1})
$$
one defines the matrix $M(\eta)$ by using the formula~\eqref{E:pross}:
$$
M(\eta) = M(T_{k}\circ T_{k-1}\circ\cdots\circ T_{2}\circ T_{1}) = M(T_{1})M(T_{2})\cdots M(T_{k}).
$$
Now if $\gamma$ is a {\em loop} in $\mathcal{N}(\tau,h)$ starting at some point $(\tau_i,h_i)$ we can lift $\gamma$ to some 
path $\hat{\gamma}$ in $\mathcal{N}^{\mathrm{lab}}(\tau,h,\varepsilon)$ starting at $(\tau_i,h_i,\varepsilon_i)$. Here $\varepsilon$
is any labelling of $(\tau,h)$. 
The end point of $\hat{\gamma}$ (that is $(\tau_i,h_i,\varepsilon'_i)$) defines a train track map
$$
R : (\tau_i,h_i,\varepsilon'_i) \longrightarrow (\tau_i,h_i,\varepsilon_i).
$$
The associated matrix $M(R)\in \mathrm{GL}(\Z^\mathbb{A})$ is induced by a permutation, namely
$R_{\alpha,\beta}=1$ if $\pi(\alpha)=\beta$ and $0$ otherwise, where $\pi=\varepsilon'_i\circ(\varepsilon_i)^{-1}\in{\rm Sym}(\mathbb{A})$.
We then define:
$$
M(\gamma) := M(\hat{\gamma})\cdot M(R).
$$
Obviously the conjugacy class of the matrix $M(\gamma)$ does not depend on the choice of the labelling $\varepsilon$.

\begin{remark}
The above discussion allows us to reformulate Theorem~\ref{thm:PaPe} and Lemma~\ref{L:factor} as follows: 
any pseudo-Anosov homeomorphism is obtained from a closed loop in some graph $\mathcal{N}(\tau,h)$ by using the above 
construction. The converse is almost true: this is Theorem~\ref{thm:converse}.
\end{remark}

We end this section with a useful description of the train track map representing lift of 
homeomorphism to $\widetilde{\mathbf{D}_{n}}$.

\subsection{Lifting train tracks} 
\label{SLEO}
We denote by $\rho:\widetilde{\mathbf{D}_{n}}\to\mathbf{D}_{n}$ the $H=\Z^{b_{1}(M)-1}$-covering of the punctured disc 
(see Section~\ref{SS:TPF}). The infinite surface $\widetilde{\mathbf{D}_{n}}$ can be constructed by glueing $H$ copies of the simply 
connected domain obtained by cutting the base $\mathbf{D}_{n}$ along $n$ disjoint segments from the punctures to the exterior 
boundary. The way one should glue is dictated by the monodromy of the covering. 
We call each of these simply connected domains a \emph{leaf} of the covering $\rho:\widetilde{\mathbf{D}_{n}}\to\mathbf{D}_{n}$.
Henceforth, we choose a leaf in $\widetilde{\mathbf{D}_{n}}$ and we label it with $e_{H}$, the identity element in $H$. 
We call it \emph{the leaf at level zero}. \medskip

For each standard $(\tau,h,\varepsilon)$, there is a natural way to define an infinite train track $\widetilde{h}:\widetilde{\tau}\to\widetilde{\mathbf{D}_{n}}$
by $\widetilde{h}(\widetilde{\tau}):=\rho^{-1}(h(\tau))$. The edges and vertices of $\widetilde{\tau}$ are in bijection with 
$E(\tau)\times H$ and $V(\tau)\times H$ respectively and there are several ways to label the edges of $\widetilde{\tau}$. \medskip

Every permutation $\eta\in{\rm Sym}(\mathbb{A}_{\mathrm{prong}})$ defines a labeling of the edges of  $\widetilde{\tau}$ as follows. For every edge $\mathbf{e}$ of 
$\widetilde{\tau}$ whose image under $\widetilde{h}$ is properly contained in the leaf at level zero we define $\widetilde{\varepsilon}(\mathbf{e})=\varepsilon(e)$, where $\rho(\mathbf{e})=e$. Now by the way we defined the leaves of the covering, and given that we are working with standardly embedded train tracks, there are exactly $2n$ edges of $\widetilde{\tau}$ whose image under 
$\widetilde{h}$  are not properly contained in the leaf of level zero. Moreover, these edges can be grouped in pairs $\{e^{1},e^{2}\}$
where $\rho(e^{1})=\rho(e^{2})=e$, and $e$ is an infinitesimal edge of $\tau$ around a puncture. For every such edge $e$
we define $\widetilde{\varepsilon}(e^{1})=\eta(\varepsilon(e))$ where $\varepsilon(e)\in \mathbb{A}_{\mathrm{prong}}$. 
Finally, we extend $\widetilde{\varepsilon}$ to the remaining edges of $\widetilde{\tau}$  by using the $H$-monodromy action of the covering.

\subsection{Lifting train track maps}
\label{sub:sec:lift}
Let us consider $(\tau,h)$ and $(\tau',h')$ two train tracks in $\mathbf{D}_n$ and let $T:\tau\longrightarrow\tau'$ be a train track map representing a class $[f]\in{\rm Mod}(\mathbf{D}_{n})$. Now consider: $\widetilde{f}:\widetilde{\mathbf{D}_{n}}\to\widetilde{\mathbf{D}_{n}}$ a lift of  $[f]$ to the $H=\Z^{b_{1}(M)-1}$-covering of the punctured disc $\rho:\widetilde{\mathbf{D}_{n}}\to\mathbf{D}_{n}$, $(\hspace{.5mm}\widetilde{\tau},\widetilde{h})$ and $(\widetilde{\tau'},\widetilde{h'})$ lifts of $(\tau,h)$ and $(\tau',h')$ to this covering respectively. As with finite train tracks, a cellular map $\widetilde{T}:\widetilde{\tau}\to\widetilde{\tau'}$ that preservers the smooth structure will be called a \emph{train track morphism}. If in addition the domain and image train tracks of the morphism are isomorphic as train tracks we speak of a \emph{train track map}. A train track morphism $\widetilde{T}:\widetilde{\tau}\to\widetilde{\tau'}$ is a representative of the lift $\widetilde{f}$ if:
\begin{enumerate}
\item The diagram
$$
\begin{CD}
\widetilde{\tau} @>\widetilde{h}>> \widetilde{\mathbf{D}_{n}}\\
@VV\widetilde{T} V @VV \widetilde{f} V\\
\widetilde{\tau'} @>\widetilde{h'}>> \widetilde{\mathbf{D}_{n}}
\end{CD}
$$
commutes, up to isotopy, and 
\item $\widetilde{f}\circ \widetilde{h}(\hspace{.5mm}\widetilde{\tau}\hspace{.5mm}) \subset N(\widetilde{h'}(\widetilde{\tau'}))$ and $\widetilde{f}\circ \widetilde{h}(\hspace{.5mm}\widetilde{\tau}\hspace{.5mm})$ is transverse to the tie foliation of $\widetilde{h'}(\widetilde{\tau'})$.
\end{enumerate}
It is clear that for every lift $\widetilde{f}$ of  $f$ there is a train track map representing it. 

Let $\eta\in{\rm Sym}(\mathbb{A}_{\mathrm{prong}})$ be any permutation and $\pi\in{\rm Sym}(\mathbb{A}_{\mathrm{prong}})$ be the permutation defined by $f$. These permutations define labelings $(\widetilde{\tau},\widetilde{h},\widetilde{\varepsilon})$ and $(\widetilde{\tau'},\widetilde{h'},\widetilde{\varepsilon'})$ respectively, 
by $\eta$ and $\pi\circ \eta$. As in the case of finite train tracks we can associate to the train track map $\widetilde{T}:\widetilde{\tau}\to\widetilde{\tau'}$ representing $\widetilde{f}$ an incidence matrix. 
The matrix $M(\widetilde{T}) \in \mathrm{GL}(\Z[H]^{\mathbb{A}})$ records  how the edges of $\widetilde{f}\circ \widetilde{h}(\hspace{.5mm}\widetilde{\tau}\hspace{.5mm})$ intersect the central ties of  $\widetilde{h'}(\widetilde{\tau'})$. Obviously by construction one has 
$M(\widetilde{T}) = M(T)\cdot \mathrm{Diag}(v)$ for a suitable vector $v\in\Z[H]^{\mathbb{A}}$. 
In the next section we explain how to compute this vector in the particular situation where $T: \tau\longrightarrow\tau'$ 
is an edge of the folding automaton.

\section{Computing the Teichm\"uller polynomial}
\label{sec:main:theo}

In this section we shall prove our main result. The statement uses what we call the 
{\em decorated folding automaton}. 
The idea is to enrich the folding automaton by adding additional  information to each of its edges
so that the computation of the Teichm\"uller polynomial can be carried out using just the decorated folding automaton. 
This represents a simplification of the problem of computing $\Theta_{F}$, for with the method we propose there is no need to pass to an abelian infinite 
cover.
\subsection{The decorated folding automaton}
In the next paragraphs we define the extra piece of information needed to obtain the decorated folding automaton. Roughly speaking, this extra piece of information is a vector $v$ with entries in $\Z[H]$ that encodes the incidence matrix $M(\widetilde{T})$ of the lift of a train track map $T$ coming from a folding operation, see \S \ref{sub:sec:lift}.

Recall that when defining the folding automaton in \S \ref{SUBSEC:GraphFolds}, the labelling map $\varepsilon$ in $(\tau, h, \varepsilon)$ is restricted to 
the set of real edges $E(\tau)_{\rm real}$ of $\tau$. We will often choose the convention that, for {\em any} train track, the infinitesimal edges enclosing 
punctures are labeled by $\{A,B,C,\ldots\}=\mathbb{A}_{\rm prong}$ 
where the alphabetical order is set to match the order on the punctures of $\mathbf{D}_n$ induced by the natural order of $\R$. \medskip

Let $(\tau,h,\varepsilon)\stackrel{T}\longrightarrow(\tau',h',\varepsilon')$  be a train track map 
associated to an edge in the folding automaton which corresponds to a {\em folding operation} $F$ and that represents a standardizing homeomorphism $f_\beta$, where $\beta$ is a braid in $B_n$. If $h'(\tau')$ is standardly embedded we say that \emph{the folding $F$ is standard}. For every edge in the folding automaton corresponding to a standard folding we define $v\in \Z[H]^{\mathbb{A}_{\rm real}}$ as the constant vector on which each entry is equal to 1.

Henceforth we assume that the folding $F$ is \emph{not standard}.
There are two real edges $\{e,e'\}\subset E(\tau)$ and three vertices $\{v_0,v_1,v_2\}\subset V(\tau)$ involved when performing  $F$. We observe that:
\begin{enumerate}
\item there is a unique edge $f\in\{e,e'\}$ in $f_\beta(h(\tau))$ which is not properly embedded, \emph{i.e.} that traverses to the lower half of the punctured disc $\mathbf{D}_n$,
\item there exists a unique vertex $v_{fix}\in\{v_0,v_1,v_2\}$ which is fixed by $f_\beta$, and
\item after performing the folding operation on $F$, a new cusp in $(\tau',h',\varepsilon')$ appears. This cusp is incident to a vertex $v_{end}\in\{v_0,v_1,v_2\}$.
Let $X\in\mathbb{A}_{\rm prong}$ be the label of the unique infinitesimal edge of $\tau$ enclosing a puncture that is incident to $v_{end}$.
\end{enumerate}
%
\begin{definition}
	\label{DEF:VECTV}
We denote by $N(T)$ the connected component of $\tau\setminus f$ which \emph{does not} contain the vertex $v_{fix}$
(possibly $N(T)=\emptyset$). We define $f'\in\{e,e'\}$ by $f'\neq f$. There are two cases to consider:
\begin{itemize}
\item \textbf{Case 1}: $f'\notin N(T)$. We define $v\in \Z[H]^{\mathbb{A}_{\rm real}}$ as:
$$
 \left\{
\begin{array}{l}
v_{\alpha}= X^{\pm 1}\hspace{.5mm}\textrm{if $\varepsilon(e)=\alpha\in\mathbb{A}_{\rm real}$ and $e\in N(T)\cup f$}, \textrm{ and} \\
v_{\alpha}=1 \hspace{2mm}\textrm{otherwise.}
\end{array}
\right.
$$
\item \textbf{Case 2}: $f'\in N(T)$. We define $v\in \Z[H]^{\mathbb{A}_{\rm real}}$ as:
$$
 \left\{
\begin{array}{l}
v_{\alpha}= X^{\pm 1}\hspace{.5mm}\textrm{if $\varepsilon(e)=\alpha\in\mathbb{A}_{\rm real}$ and $e\in N(T)$}, \textrm{ and} \\
v_{\alpha}=1 \hspace{2mm}\textrm{otherwise.}
\end{array}
\right.
$$
\end{itemize}
\end{definition}

The sign of the exponent in $X^{\pm 1}$ is determined by the choice of the counterclockwise direction as positive direction for rotations on the disc. 
The {\em decorated folding automaton} $\mathcal{N}^{\mathrm{aug}}(\tau,h)$ is $\mathcal{N}^{\mathrm{lab}}(\tau,h,\varepsilon)$
where we add the information $(\pi,v)$ at each edge.
\begin{remark}
	\label{Rem:vectorv}
A priori one would expect the vector $v$ encoding the matrix $M(\widetilde{T})$ to be larger, that is, $v\in \Z[H]^{\mathbb{A}}$. 
However, as we will see in the next section, the contribution of infinitesimal edges to the determinant formula~\ref{E:TPform} cancels out 
with the contribution of the matrix $P_V(t)$, and hence one can restrict the computation of the Teichm\"uller polynomial to the subset of 
$\mathbb{A}_{\rm real}\subset \mathbb{A}$ formed by real edges.
\end{remark}

If there is an edge between $(\tau,h,\varepsilon)$ and $(\tau,f_{\beta}\circ h,\varepsilon)$ where 
$\beta\in B_n$ and $f_{\beta}\circ h(\tau)$ is also standard then the vector $v\in  \Z[H]^{\mathbb{A}}$
is defined by $v=(X^{\pm 1},X^{\pm 1},\ldots,X^{\pm 1})$ depending the orientation of the braid $\beta$, and
$X$ is the label associated to the first, respectively last, prong of $\mathbf{D}_n$.

\begin{example}
We consider the folding automaton for the train track depicted in  Figure~\ref{fig:decorated:B3}. We have depicted only real edges. Infinitesimal edges enclosing punctures are labeled by $\{A,B,C\}=\mathbb{A}_{\rm prong}$, where the alphabetical order is set to match the order on the punctures of $\mathbf{D}_3$ induced by the natural order of $\R$. This automaton has two edges. To the right the one corresponding to the non standard folding $F_{ab}$ that folds the real edge labeled with $a$ over the real edge labeled with $b$. The standardizing homeomorphism in this case is given by $f_{\sigma_2}$ and a direct computation shows that:
\begin{itemize}
\item $v_{\rm fix}$ is the vertex on which the infinitesimal edge $A$ is incident and $v_{end}$ is the vertex on which the infinitesimal edge $C$ is incident. Therefore the label of the unique infinitesimal edge of $\tau$ enclosing a puncture that is incident to $v_{end}$ is given by $X=C$.
\item The edge $f$ in definition  \ref{DEF:VECTV} is the edge labeled with $a$. Therefore $N(T_{ab})$ is the graph containing the infinitesimal edges $B$ and $C$, the vertices to which they are incident and the real edge $b$. Therefore, according to definition \ref{DEF:VECTV},  we are in case 2 .

\end{itemize}
Hence we deduce that the vector corresponding to this edge of the automaton is given by $v_{ab}=(1,C^{+1})$. We leave to the reader the rest of the computations. To deduce the signed permutations corresponding to the edges of the automaton it is useful to look at Figure~\ref{isotopy}.
 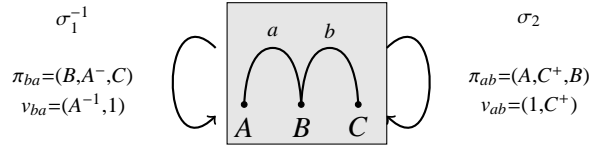
\begin{figure}[htbp]
 \begin{tikzpicture}[scale=0.75, inner sep=1mm]
\filldraw[fill=white!90!black,smooth]
(-1.3,-.7) rectangle (1.5,1.8);

\foreach \x in {4.5,3.5,2.5}
	\draw [xshift=3.5cm, fill=black] (-\x,0) circle (0.05);
\draw [thick, xshift=3.5cm,smooth] plot [tension=1.75] coordinates{(-4.5,0) (-4,1) (-3.5,0)};	
\draw [thick, xshift=3.5cm,smooth] plot [tension=1.75] coordinates{(-3.5,0) (-3,1) (-2.5,0)};

\draw (-.5,1.3) node{$\scriptstyle a$};
\draw (.5,1.3) node{$\scriptstyle b$};
\draw (-1,-.4) node{$A$};
\draw (0,-.4) node{$B$};
\draw (1,-.4) node{$C$};

\draw (-4,1.5) node{$\scriptstyle \sigma_1^{-1}$};
\draw (-4,0.5) node{$\scriptstyle \pi_{ba}=(B,A^{-},C)$};
\draw (-4,0) node{$\scriptstyle v_{ba}=(A^{-1},1)$};

\draw (4,1.5) node{$\scriptstyle \sigma_2$};
\draw (4,0.5) node{$\scriptstyle \pi_{ab}=(A,C^{+},B)$};
\draw (4,0) node{$\scriptstyle v_{ab}=(1,C^+)$};
     
\draw[thick,->] (-1.5,1) .. controls  +(-1,1) and +(-1,-1) .. (-1.5,-0.2);
\draw[thick,->] (1.5,1) .. controls  +(1,1) and +(1,-1) .. (1.5,-0.2);

 \end{tikzpicture}
 \caption{
 \label{fig:decorated:B3}
The {\em decorated folding automaton} for $B_{3}$. The two edges represent $\sigma^{-1}_1$ (left) and $\sigma_2$ (right).}
\end{figure}
\end{example}

\subsection{Main result}
A pseudo-Anosov class $[f_\beta]\in \mathrm{Mod}(\mathbf D_n)$ leaving invariant 
a train track $(\tau,h)$ defines a map
\begin{equation}
	\label{E:tmap}
t:\mathbb{A}_{\mathrm{prong}}\to H=\mathrm{Hom}(H^1(\mathbf D_n,\Z)^{f_\beta},\Z)	
\end{equation}
in the following way. We have a collection of cycles $s_\alpha=[\partial U_\alpha]$, $\alpha\in \mathbb{A}_{\mathrm{prong}}$, that form a basis for 
$H_1(\mathbf{D}_n,\Z)$. Moreover $f_{\beta}$ acts on this basis, indeed for every $\alpha\in \mathbb{A}_{\mathrm{prong}}$ we have 
$f_\beta(s_\alpha)=s_{\beta(\alpha)}$, where $\beta \in \mathrm{Sym}(\mathbb{A}_{\mathrm{prong}})$ is the permutation that  $\beta$ defines  on the punctures. 
For each cycle $\sigma$ of $\beta$ let $t_\sigma = \sum_{\alpha\in \mathrm{Supp}(\sigma)} s_\alpha$. This defines a multiplicative basis for $H$, that we denote by $t_1,\dots,t_r$ for simplicity. The map $t:\mathbb{A}_{\mathrm{prong}}\to H$ is given by $t(\alpha)=t_\sigma$ provided that $\alpha\in \mathrm{Supp}(\sigma)$.

\begin{convention}
Let $\mathbb{A}_{\rm prong}^{\pm 1}:=\{\alpha^{+1},\alpha^{-1}\}_{\alpha\in\mathbb{A}_{\rm prong}}$. We extend the function $t:\mathbb{A}_{\mathrm{prong}}\to H$ (respectively, $\pi \in \mathrm{Sym}(\mathbb{A}_{\mathrm{prong}})$)
to a function $\{1\}\cup \mathbb{A}_{\mathrm{prong}}^{\pm 1}\to H$ (respectively, permutation of $\{1\}\cup \mathbb{A}_{\mathrm{prong}}^{\pm 1}$) 
by $t(\alpha^{\pm 1})=t(\alpha)^{\pm 1}$ if $\alpha \in\mathbb{A}_{\mathrm{prong}}$ and $t(1)=1$
(respectively, $s(\alpha^{\pm 1})=s(\alpha)^{\pm 1}$ and $s(1)=1$). \medskip

Finally, if $v$ is a vector with entries in  $\{1,\alpha^{\pm 1}\hspace{1mm}|\hspace{1mm} \alpha\in \mathbb{A}_{\mathrm{prong}}\}$, 
we define $t(v)$ (respectively, $\pi(v)$) as the vector that results from the evaluation $t$ (respectively, $\pi$) on each coordinate. 
\end{convention}

\begin{theorem}
\label{thm:second:main}
Let $[f_\beta]$ be a pseudo-Anosov class given by the loop
$$
(\tau_{1},\varepsilon_1)\stackrel{T_{1}}\longrightarrow(\tau_{2},\varepsilon_{2})\stackrel{T_{2}}{\longrightarrow}\cdots
\stackrel{T_{k-1}}\longrightarrow(\tau_{k},\varepsilon_{k}) \stackrel{T_{k}}\longrightarrow(\tau_{1},\varepsilon_{k+1})\stackrel{R}\longrightarrow(\tau_{1},\varepsilon_1)
$$
in the decorated folding automaton. We assume that the matrix describing the linear map on the space of weights on real
edges is primitive irreducible. Then the Teichm\"uller polynomial $\Theta_{F}(t,u)$ of the associated fibered face $F$ 
determined by $[f_\beta]$ is:
\begin{equation}
	\label{E:DetFormulaTh}
\Theta_{F}(t_1,\dots,t_r,u)=\det(u\cdot \Id-M)
\end{equation}
where
\begin{equation}
	\label{E:ProdFormulaTh}
M = M(T_{1})D_1 \cdot  M(T_{2}) D_{2} \cdots M(T_{k})D_k \cdot M(R).
\end{equation}
and, for $i=1,\ldots,k$:
\begin{equation}
\left\{
\label{E:DiagFormulaTh}
\begin{array}{l}
D_i= {\rm Diag}(t(w_i)) \in \mathrm{GL}(\Z[H]^{\mathbb{A}_{\mathrm{real}}}), \\
w_i=\eta_i(v_i),\\
\eta_1=\mathrm{Id}_{\mathbb{A}_{\mathrm{prong}}} \textrm{and for } i\geq 2:\ \eta_i=\pi_{i-1}\circ \eta_{i-1}.
\end{array}
\right.
\end{equation}

\end{theorem}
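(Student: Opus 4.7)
The starting point is McMullen's formula (Theorem~\ref{THM:TPformula}):
\[
\Theta_F(t,u) = \frac{\det(u\cdot\mathrm{Id} - P_E(t))}{\det(u\cdot\mathrm{Id} - P_V(t))}.
\]
The plan is to factor the numerator as a product over the foldings of the loop, and then cancel the denominator against the infinitesimal-edge block of the numerator. By Lemma~\ref{L:factor} the representative $T$ of $f_\beta$ factors as $T = R\circ T_k\circ\cdots\circ T_1$; lifting to the $H$-cover as in \S\ref{sub:sec:lift} yields
\[
P_E(t) = M(\widetilde{T_1})\,M(\widetilde{T_2})\cdots M(\widetilde{T_k})\,M(\widetilde{R}),
\]
and for each $i$ one has $M(\widetilde{T_i}) = M(T_i)\cdot \mathrm{Diag}(\bar v_i)$ for some vector $\bar v_i\in \Z[H]^{\mathbb{A}}$. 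The bulk of the proof consists in identifying the real-edge part of $\bar v_i$ with $t(w_i) = t(\eta_i(v_i))$.

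To carry out this identification for a single folding $T_i:\tau_i\to\tau_{i+1}$, I would read off which lifted real edges cross from one leaf of the $H$-cover to another, i.e.\ acquire a nontrivial deck shift. The standardizing braid associated with the folding acts near a single puncture (labelled $X$ at the vertex $v_{\mathrm{end}}$) as a rotation; only the real edges lying on the side of the folded edge $f$ opposite the fixed vertex $v_{\mathrm{fix}}$ are dragged through this rotation. This is the content of Definition~\ref{DEF:VECTV}, with Cases~1 and~2 distinguishing whether the second real edge $f'$ is itself dragged. The resulting deck shift is exactly $t(X)^{\pm 1}$, the sign being governed by the counterclockwise orientation convention. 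After several foldings, however, the labelling of punctures in $\tau_i$ has been permuted by the accumulated standardizing braids $\pi_1,\dots,\pi_{i-1}$, whereas the basis $(t_1,\dots,t_r)$ of $H$ is fixed once and for all; hence one must pre-compose with $\eta_i = \pi_{i-1}\circ\cdots\circ\pi_1$ before applying $t$, which produces the diagonal matrix $D_i = \mathrm{Diag}(t(\eta_i(v_i)))$ of~\eqref{E:DiagFormulaTh}.

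For the cancellation, by Remark~\ref{Rem:vectorv} the prong columns of each $M(\widetilde{T_i})$ have entries only in the prong rows, because standardizing braids send infinitesimal edges to infinitesimal edges; hence $P_E(t)$ is block lower-triangular with respect to the splitting $\mathbb{A} = \mathbb{A}_{\mathrm{prong}}\sqcup\mathbb{A}_{\mathrm{real}}$. The prong block coincides with $P_V(t)$, since each prong edge is mapped bijectively to another prong edge with the same deck-shift as the corresponding vertex. The determinant thus factors, the prong factor cancels the denominator, and only $\det(u\cdot\mathrm{Id} - M)$ on the real block survives, which is exactly~\eqref{E:DetFormulaTh}. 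The main obstacle is the combinatorial verification of Definition~\ref{DEF:VECTV}: one must check that the purely combinatorial recipe (the side of $f$ opposite $v_{\mathrm{fix}}$, together with the two cases for $f'$) exactly picks out the real edges whose lifts cross leaves of the $H$-cover, and that the sign $\pm 1$ of the exponent agrees with the geometric rotation direction. This boils down to a local picture around each folded cusp combined with the explicit lifting recipe of \S\ref{SLEO} and \S\ref{sub:sec:lift}, and is the principal computation behind the theorem.
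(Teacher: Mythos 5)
Your strategy matches the paper's: start from McMullen's determinant formula, cancel the infinitesimal-edge contribution against $\det(u\cdot\mathrm{Id}-P_V)$, factor $P_E$ through the loop in the automaton using Lemma~\ref{L:factor}, and reduce the problem to showing that each lifted folding has incidence matrix $M(T_i)\,\mathrm{Diag}(t(\eta_i(v_i)))$, the recursion for $\eta_i$ arising because the accumulated standardizing permutations relabel punctures while the basis of $H$ is fixed. You correctly locate the substance of the proof in the combinatorial verification of Definition~\ref{DEF:VECTV}, and the geometric intuition you give (only real edges dragged past $v_{\mathrm{end}}$'s puncture pick up a deck shift $t(X)^{\pm 1}$, the sign depending on the rotation direction) is exactly what the paper's case analysis (Cases A.1, B.1, \dots, Appendix~\ref{APPENDIX:CASES}) makes precise, using the specific lift $\widetilde{f_\beta}$ fixing the point above $v_{\mathrm{fix}}$ in the leaf at level zero. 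That case analysis is the proof; you explicitly defer it, so what you have is a correct outline rather than a complete argument.

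One small inaccuracy in your cancellation step: you split $\mathbb{A}=\mathbb{A}_{\mathrm{prong}}\sqcup\mathbb{A}_{\mathrm{real}}$ and claim the prong block coincides with $P_V(t)$. But $\mathbb{A}_{\mathrm{prong}}$ and $\mathbb{A}_{\mathrm{real}}$ do not in general partition $\mathbb{A}$: when the invariant foliation has interior singularities the train track has additional infinitesimal edges not enclosing punctures (e.g.\ the small triangle in $\tau_1$ for the $B_4$ example of Section~\ref{EXB4}). The correct statement, as in Remark~\ref{Rem:vectorv}, is that the \emph{entire} infinitesimal block cancels against $P_V(t)$, and one only keeps $\mathbb{A}_{\mathrm{real}}$; your block-triangularity reasoning goes through once you replace ``prong'' by ``infinitesimal'' throughout, because infinitesimal edges always map to infinitesimal edges under a folding.
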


\begin{proof}[Proof of Theorem~\ref{thm:second:main}]
We first observe that the assumption on real edges implies that the contribution of infinitesimal edges to the numerator in the determinant 
formula~\eqref{E:TPform} cancels out with the denominator. This fact, together with the discussion we did in sections \ref{SS:ClosedLoopsAutomaton}-\ref{sub:sec:lift}, imply formulas  (\ref{E:DetFormulaTh}) and (\ref{E:ProdFormulaTh}). We need then to show that each diagonal matrix $D_i$ is given by formula (\ref{E:DiagFormulaTh}). 
We will prove first that given an edge 
\begin{equation}
	\label{E:edgeatproof}
	(\tau_i,\varepsilon_i,h_{i})\stackrel{T_i}{\to}(\tau_{i+1},\varepsilon_{i+1},h_{i+1})
\end{equation}
of the decorated  automaton representing a standardizing homeomorphism $f_{\beta}$, the incidence matrix $M(\widetilde{T_i})$ with entries in $\Z[H]$ associated to a lift of $T_i$ to $\widetilde{\mathbf{D}_n}$ is of the form  $M(T_i){\rm Diag (t(\eta_i(v_i))}$. This is the longest part of the proof and is done by cases that depend on the train track $(\tau_i,\varepsilon_i,h_{i})$.  We deal with the recursive nature of formula (\ref{E:DiagFormulaTh}) at the end. In order to present the list of cases, 
we observe that
\begin{enumerate}
\item \emph{No train track of the decorated folding automaton defines a polygon whose sides are real edges}. More precisely, if $(\tau,h,\varepsilon)\in\mathcal{N}^{\mathrm{lab}}(\tau,h,\varepsilon)$, then there is no connected component of $\mathbf{D}_n\setminus h(\tau)$ homeomorphic to a disc whose boundary is formed by real edges of $\tau$.
\item \emph{All standardizing homeomorphisms are 'simple' in the sense of Ko-Los-Song \cite{KLS}}. More precisely, let $\delta_{[l,m]}:=\sigma_{m-1}\sigma_{m-2}\cdots\sigma_l$, where the $\sigma_i$'s are the standard Artin generators of the braid group $B_n$. Then we can suppose that all the homeomorphism used to standardize train tracks in the augmented folding automaton are of the form $\delta^{\pm 1}_{[l,m]}$. 
\end{enumerate}

It will be useful to understand the proof to describe more precisely the covering $\widetilde{\mathbf{D}_{n}}$ discussed in section (\ref{SLEO}).
Let $D$ be the leaf of this covering 
obtained by cutting the base $\mathbf{D}_n$ along n disjoint vertical segments that go from the punctures to the lower part of the exterior boundary. Any labelling by elements of $\mathbb{A}_{\mathrm{prong}}$ of the infinitesimal edges surrounding punctures defines a natural labelling of these vertical segments. Let us denoted them by 
$\iota_\alpha$ where 
$\alpha\in \mathbb{A}_{\mathrm{prong}}$. The infinite covering $\widetilde{\mathbf{D}_{n}}$ is obtained by glueing the disjoint family of copies of $D$ in the family $\{D_h\}_{h\in H}$ as follows: for every $h\in H$, crossing in $D_h$ the segment 
$\iota_\alpha$ in the counter clockwise direction takes you to $D_{t(\alpha)h}$, where $t$ is the map defined in~\eqref{E:tmap}. From this detailed description we deduce that if the train track map corresponding to the edge (\ref{E:edgeatproof}) comes from a standard folding $F_i$ then $M(\widetilde{T_i})=M(T_i)$. Indeed, it is sufficient to remark that no real edge of $\widetilde{f_{\beta}}(\widetilde{\tau_i})$ intersects a vertical segment $\iota_\alpha$.

Now let us suppose that the edge (\ref{E:edgeatproof}) comes from a non standard folding $F_i$. We justify in detail the equality $M(\widetilde{T_i})=M(T_i){\rm Diag (t(\eta_i(v_i)))}$ in two illustrative cases. Then we explain how to proceed with all the cases that remain, see Appendix \ref{APPENDIX:CASES}.\\
\\
\textbf{Case A.1}. This case is formed by an infinite family of train tracks arising from a graph $\Gamma$ embedded in $\mathbf{D}_{n}$. This graph $\Gamma$ will be called a \emph{basic type} and consists of: two real edges $\{e,e'\}$, three vertices $\{v_{0},v_{1},v_{2}\}$ and at most three infinitesimal edges, each of which encloses a puncture of $\mathbf{D}_{n}$ and is incident to a vertex in $\{v_{0},v_{1},v_{2}\}$. 
For the particular situation of case A.1, the graph $\Gamma$ is depicted in figure \ref{fig:basictype}. 
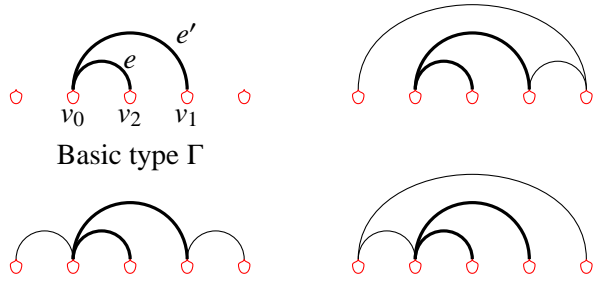
\begin{figure}[htbp]
 \begin{tikzpicture}[scale=0.75, inner sep=1mm]

\foreach \x in {-5,-4,-3,-2,-1}
	\draw[smooth, red] (\x,0) to[out=270,in=90] (\x-.1,-.1) to[out=270,in=180] (\x,-.25) to[out=0,in=270] (\x+.1,-.1) to[out=90,in=270] (\x,0);
\draw[smooth, very thick] (-4,0) to[out=90,in=180] (-3,1) to[out=0,in=90] (-2,0); 	
\draw[smooth, very thick] (-4,0) to[out=90,in=180] (-3.5,.5) to[out=0,in=90] (-3,0); 


\draw(-3,.5) node{$e$};
\draw(-2,1) node{$e'$};
\draw(-4,-.5) node{$v_{0}$};
\draw(-3,-.5) node{$v_{2}$};
\draw(-2,-.5) node{$v_{1}$};
\draw(-3,-1.2) node{Basic type $\Gamma$};

\foreach \x in {-5,-4,-3,-2,-1}
	\draw[xshift=6cm,smooth, red] (\x,0) to[out=270,in=90] (\x-.1,-.1) to[out=270,in=180] (\x,-.25) to[out=0,in=270] (\x+.1,-.1) to[out=90,in=270] (\x,0);
\draw[xshift=6cm,smooth, very thick] (-4,0) to[out=90,in=180] (-3,1) to[out=0,in=90] (-2,0); 	
\draw[xshift=6cm,smooth, very thick] (-4,0) to[out=90,in=180] (-3.5,.5) to[out=0,in=90] (-3,0); 
\draw[smooth] (1,0) to[out=90,in=180] (3,1.5) to[out=0,in=90] (5,0); 
\draw[smooth] (4,0) to[out=90,in=180] (4.5,.5) to[out=0,in=90] (5,0);

\foreach \x in {-5,-4,-3,-2,-1}
	\draw[xshift=6cm,yshift=-3cm,smooth, red] (\x,0) to[out=270,in=90] (\x-.1,-.1) to[out=270,in=180] (\x,-.25) to[out=0,in=270] (\x+.1,-.1) to[out=90,in=270] (\x,0);
\draw[xshift=6cm,yshift=-3cm,smooth, very thick] (-4,0) to[out=90,in=180] (-3,1) to[out=0,in=90] (-2,0); 	
\draw[xshift=6cm,yshift=-3cm,smooth, very thick] (-4,0) to[out=90,in=180] (-3.5,.5) to[out=0,in=90] (-3,0); 
\draw[smooth,yshift=-3cm] (1,0) to[out=90,in=180] (3,1.5) to[out=0,in=90] (5,0); 
\draw[smooth, yshift=-3cm] (1,0) to[out=90,in=180] (1.5,.5) to[out=0,in=90] (2,0);

\foreach \x in {-5,-4,-3,-2,-1}
	\draw[yshift=-3cm,smooth, red] (\x,0) to[out=270,in=90] (\x-.1,-.1) to[out=270,in=180] (\x,-.25) to[out=0,in=270] (\x+.1,-.1) to[out=90,in=270] (\x,0);
\draw[yshift=-3cm,smooth, very thick] (-4,0) to[out=90,in=180] (-3,1) to[out=0,in=90] (-2,0); 	
\draw[yshift=-3cm,smooth, very thick] (-4,0) to[out=90,in=180] (-3.5,.5) to[out=0,in=90] (-3,0); 
\draw[smooth, yshift=-3cm] (-5,0) to[out=90,in=180] (-4.5,.5) to[out=0,in=90] (-4,0); 
\draw[smooth,xshift=3cm, yshift=-3cm] (-5,0) to[out=90,in=180] (-4.5,.5) to[out=0,in=90] (-4,0); 

\end{tikzpicture}
 \caption{
 \label{fig:basictype}
Basic type A.1 and some train tracks that arise from it.}
\end{figure}


The idea now is to add edges and vertices to $\Gamma$ to form vertices $(\tau_i,\varepsilon_i,h_{i})$ of the folding automaton in $\mathbf{D}_{n}$. There are many ways to do this, some of which are depicted in figure \ref{fig:basictype}.  In fact, given that the vertices of the automaton are properly embedded train tracks, \emph{there are only finitely many types} of train tracks that can be obtained this way. To illustrate what we mean by a \emph{type of a train track} we display in figure \ref{fig:typesA.1} all possible types of train tracks arising from the basic type $\Gamma$ in figure \ref{fig:basictype} \footnote{For this particular case we depict $\Gamma$ embedded in $\mathbf{D}_{4}$.}. In figure \ref{fig:typesA.1}, each one of the small boxes represents a subgraph of $(\tau_i,\varepsilon_i,h_{i})$.\\
Now let us consider an edge of the automaton (\ref{E:edgeatproof}) where $(\tau_{i},h_{i},\varepsilon_{i})$ is the  type A.1.1 depicted in figure \ref{fig:typesA.1} and the train track map $T_{i}$ represents the homeomorphism $f_{\beta}$ that standardizes the train track $F_{ee'}(\tau_{i})$ that arises when folding edge $e$ over edge $e'$. In figure \ref{fig:edgeautomaton} we depict this edge of the automaton in detail. The numbers $0,1,2$ in this figure represent vertices $\{v_{0},v_{1},v_{2}\}$ respectively. Remark that for this edge of the automaton we have $v_{\rm fix}=v_{0}$ and $v_{\rm end}=v_{1}$. Moreover, edges $f$, $f'$ from definition 
\ref{DEF:VECTV} are given by $f=e$, $f'=e'$ and the subgraph $N(T_{i})$ is highlighted in bold. We observe that $f'\notin N(T_{i})$, hence we are in case  1 presented in the same definition.


\begin{figure}[htbp]
 \begin{tikzpicture}[scale=0.75, inner sep=1mm]

	\filldraw[xshift=-3.2cm,yshift=-.2cm,fill=white!90!black,smooth] (0,0) rectangle (0.4,0.4);
	\filldraw[xshift=-4.2cm,yshift=-.2cm,fill=white!90!black,smooth] (0,0) rectangle (0.4,0.4);
	\filldraw[xshift=-5.2cm,yshift=-.2cm,fill=white!90!black,smooth] (0,0) rectangle (0.4,0.4);	
	\filldraw[xshift=-2.2cm,yshift=-.2cm,fill=white!90!black,smooth] (0,0) rectangle (0.6,0.4);	
\draw[smooth, very thick] (-4,0) to[out=90,in=180] (-3,1) to[out=0,in=90] (-2,0); 	
\draw[smooth, very thick] (-4,0) to[out=90,in=180] (-3.5,.5) to[out=0,in=90] (-3,0); 
\draw[smooth] (-5,0) to[out=90,in=180] (-3.5,1.5) to[out=0,in=90] (-1.8,0); 

	\filldraw[xscale=-1,xshift=-3.2cm,yshift=-.2cm,fill=white!90!black,smooth] (0,0) rectangle (0.4,0.4);
	\filldraw[xscale=-1,xshift=-4.2cm,yshift=-.2cm,fill=white!90!black,smooth] (0,0) rectangle (0.4,0.4);
	\filldraw[xscale=-1,xshift=-5.2cm,yshift=-.2cm,fill=white!90!black,smooth] (0,0) rectangle (0.4,0.4);	
	\filldraw[xscale=-1,xshift=-2.2cm,yshift=-.2cm,fill=white!90!black,smooth] (0,0) rectangle (0.6,0.4);	
\draw[xscale=-1,smooth, very thick] (-4,0) to[out=90,in=180] (-3,1) to[out=0,in=90] (-2,0); 	
\draw[xscale=-1,xshift=1cm, smooth, very thick] (-4,0) to[out=90,in=180] (-3.5,.5) to[out=0,in=90] (-3,0); 
\draw[xscale=-1,smooth] (-5,0) to[out=90,in=180] (-3.5,1.5) to[out=0,in=90] (-1.8,0); 

\draw(-3.5,-.8) node{Type A.1.1};

\draw(3.5,-.8) node{Type A.1.2};

\end{tikzpicture}
 \caption{
 \label{fig:typesA.1}
Types A.1.1 and A.1.2 arising from simple type A.1 .}
\end{figure}
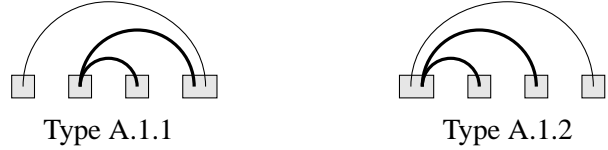


Now consider the lift $\widetilde{\mathbf{D}_{n}}\stackrel{\widetilde{f_{\beta}}}{\to}{\widetilde{\mathbf{D}_{n}}}$ which fixes the point in the fiber over $v_{\rm fix}$ contained in $D_{e_{H}}$ (the leaf of level zero). Such a lift always exists, for $f_{\beta}$ fixes downstairs $v_{\rm fix}$ by definition. 
We now consider the lift
\begin{equation}
	\label{E:liftTi}
(\widetilde{\tau_i},\widetilde{\varepsilon_i},\widetilde{h_{i}})\stackrel{\widetilde{T_i}}{\to}(\widetilde{\tau_{i+1}},\widetilde{\varepsilon_{i+1}},\widetilde{h_{i+1}})
\end{equation}
representing $\widetilde{f_{\beta}}$.
To compute $M(\widetilde{T_{i}})$ we present figure \ref{fig:edgeautomatonUPSTAIRS} where $\widetilde{f_{\beta}}(\widetilde{\tau_{i}})$ and $\widetilde{h_{i+1}}(\widetilde{\tau_{i+1}})$ are both depicted at the leaf $D_{e_{H}}$ of level zero. Remark that \emph{at the leaf of level zero}, except for real edges in $\widetilde{f_{\beta}}(\widetilde{\tau_{i}})$ depicted in bold, all real edges of $\widetilde{f_{\beta}}(\widetilde{\tau_{i}})$ are labeled by $\widetilde{\varepsilon_{i}}$ with letters in $\mathbb{A}$. Labels given to the edges in bold by $\widetilde{\varepsilon_{i}}$  are of the form $t_{i}^{-1}x$, where $x$ ranges among labels in $\mathbb{A}$ reserved for real edges in $N(T_{i})\cup f$ and  $t(X)=t_{i}\in H$. Here, $X\in\mathbb{A}_{\rm prong}$ is the label (given by $\varepsilon_{i}$)  of the unique infinitesimal edge of $\tau_{i}$ enclosing a puncture and that is incident to $v_{\rm end}$. Now remark that 
the projection of the real edges depicted in bold in figure \ref{fig:edgeautomatonUPSTAIRS} to the base $\mathbf{D}_{n}$ are precisely the edges in  $f_{\beta}(\tau_{i})$  contained in $N(T_{i})\cup f$.



\begin{figure}[htbp]
 \begin{tikzpicture}[scale=0.75, inner sep=1mm]

	\filldraw[very thick,xshift=-3.2cm,yshift=-.2cm,fill=white!90!black,smooth] (0,0) rectangle (0.4,0.4);
	\filldraw[xshift=-4.2cm,yshift=-.2cm,fill=white!90!black,smooth] (0,0) rectangle (0.4,0.4);
	\filldraw[xshift=-5.2cm,yshift=-.2cm,fill=white!90!black,smooth] (0,0) rectangle (0.4,0.4);	
	\filldraw[xshift=-1.2cm,yshift=-.2cm,fill=white!90!black,smooth] (0,0) rectangle (0.6,0.4);	
\draw[smooth] (-4,0) to[out=90,in=180] (-3,1) to[out=0,in=90] (-2,0); 	
\draw[smooth] (-4,0) to[out=90,in=180] (-3.5,.5) to[out=0,in=90] (-3,0); 
\draw[smooth] (-5,0) to[out=90,in=180] (-3.5,1.5) to[out=0,in=90] (-.8,0); 
\draw[smooth](-2,0) to[out=90,in=180] (-1.5,.5) to[out=0,in=90] (-1,0);

\draw(-3,.5) node{$e$};
\draw(-2,.8) node{$e'$};
\draw(-4,-.5) node{$0$};
\draw(-3,-.5) node{$2$};
\draw(-2,-.5) node{$1$};
\draw(0.1,1.2) node{$F_{ee'}$};
\draw(-2.5,-1.5) node{$f_{\beta}$};
\draw(-5,1.3) node{$\tau_{i}$};


\draw[smooth,->] (-.5,.8) to[out=25,in=150] (.5,.8);

\draw[smooth,->] (-3,-1) to[out=280,in=80] (-3,-2);


\filldraw[xshift=.8cm,yshift=-.2cm,fill=white!90!black,smooth] (0,0) rectangle (0.4,0.4);
\filldraw[xshift=1.8cm,yshift=-.2cm,fill=white!90!black,smooth] (0,0) rectangle (0.4,0.4);
\filldraw[very thick,xshift=2.8cm,yshift=-.2cm,fill=white!90!black,smooth] (0,0) rectangle (0.4,0.4);
	\filldraw[xshift=4.8cm,yshift=-.2cm,fill=white!90!black,smooth] (0,0) rectangle (0.6,0.4);
	\draw[smooth] (2,0) to[out=90,in=180] (3,1) to[out=0,in=90] (4,0);
\draw[xshift=6cm,smooth] (-5,0) to[out=90,in=180] (-3.5,1.5) to[out=0,in=90] (-.8,0); 
\draw[xshift=6cm,smooth](-2,0) to[out=90,in=180] (-1.5,.5) to[out=0,in=90] (-1,0);
\draw[smooth] (3,0) to[out=90,in=180] (3.5,.5) to[out=0,in=90] (3.75,0)to[out=270,in=180] (4,-.3)to[out=0,in=270] (4.2,0)to[out=90,in=0] (4.1,.1)to[out=180,in=90] (4,0);


\draw(4,1) node{$e'$};
\draw(3,.5) node{$e$};
\draw[xshift=6cm](-4,-.5) node{$0$};
\draw[xshift=6cm](-3,-.5) node{$2$};
\draw[xshift=6cm](-2,-.5) node{$1$};
\draw(3.5,-1.5) node{$f_{\beta}$};

\draw[smooth,->](3,-1)to[out=280,in=80] (3,-2);

\filldraw[xshift=.8cm,yshift=-4.2cm,fill=white!90!black,smooth] (0,0) rectangle (0.4,0.4);
\filldraw[xshift=1.8cm,yshift=-4.2cm,fill=white!90!black,smooth] (0,0) rectangle (0.4,0.4);
\filldraw[very thick,xshift=3.8cm,yshift=-4.2cm,fill=white!90!black,smooth] (0,0) rectangle (0.4,0.4);
\filldraw[xshift=4.8cm,yshift=-4.2cm,fill=white!90!black,smooth] (0,0) rectangle (0.6,0.4);
\draw[smooth](2,-4) to[out=90,in=180] (2.5,-3.5) to[out=0,in=90] (3,-4);
\draw[xshift=1cm,smooth](2,-4) to[out=90,in=180] (2.5,-3.5) to[out=0,in=90] (3,-4);
\draw[smooth](2,-4) to[out=90,in=180] (2.5,-3.5) to[out=0,in=90] (3,-4);
\draw[smooth](3,-4) to[out=90,in=180] (4,-3) to[out=0,in=90] (5,-4);
\draw[smooth](1,-4) to[out=90,in=180] (3,-2.5) to[out=0,in=90] (5.2,-4);


\draw(2.5,-3.1) node{$e'$};
\draw(3.8,-3.3) node{$e$};
\draw(2,-4.5) node{$0$};
\draw(3,-4.5) node{$1$};
\draw(4,-4.5) node{$2$};
\draw(1,-2.5) node{$\tau_{i+1}$};


\filldraw[xshift=-5.2cm,yshift=-4.2cm,fill=white!90!black,smooth] (0,0) rectangle (0.4,0.4);
\filldraw[xshift=-4.2cm,yshift=-4.2cm,fill=white!90!black,smooth] (0,0) rectangle (0.4,0.4);
\filldraw[very thick,xshift=-2.2cm,yshift=-4.2cm,fill=white!90!black,smooth] (0,0) rectangle (0.4,0.4);
\filldraw[xshift=-1.2cm,yshift=-4.2cm,fill=white!90!black,smooth] (0,0) rectangle (0.4,0.4);

\draw[xshift=-6cm,smooth](2,-4) to[out=90,in=180] (2.5,-3.5) to[out=0,in=90] (3,-4);
\draw[xshift=-6cm,smooth](3,-4) to[out=90,in=180] (4,-3) to[out=0,in=90] (5,-4);
\draw[xshift=-6cm,smooth](1,-4) to[out=90,in=180] (3,-2.5) to[out=0,in=90] (5.2,-4);
\draw[smooth] (-4,-4) to[out=90,in=180] (-3.5,-3.7) to[out=0,in=180] (-3,-4.5)to[out=0,in=270] (-2.5,-4)to[out=90,in=180] (-2.25,-3.5)to[out=0,in=90] (-2,-4);


\draw(-3.5,-3.1) node{$e'$};
\draw(-2,-3.3) node{$e$};
\draw(-4,-4.8) node{$0$};
\draw(-3,-4.8) node{$1$};
\draw(-2,-4.8) node{$2$};

\end{tikzpicture}
 \caption{
 \label{fig:edgeautomaton}
A detailed edge of the automaton in case A.1.1.}
\end{figure}
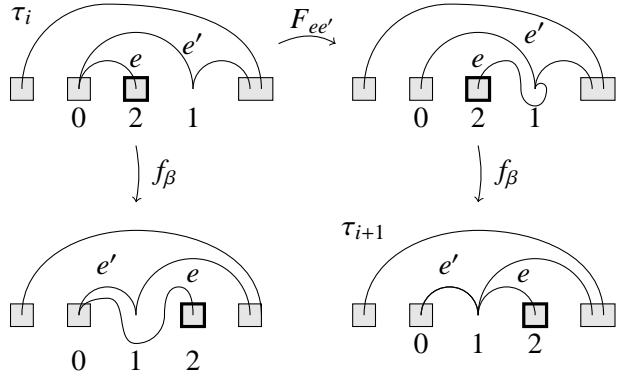

 From this data we can perform a straightforward calculation which shows that in this case $M(\widetilde{T_i})=M(T_i){\rm Diag (t(\eta_i(v_i)))}$, where all entries in the diagonal matrix ${\rm Diag (t(\eta_i(v_i)))}$ different from 1 are equal to $t_{i}$. The case when $T_{i}$ represents the homeomorphism $f_{\beta}$ that standardizes the train track $F_{ee'}(\tau_{i})$ that arises when folding $e'$ over $e$ is treated in the same way. \\


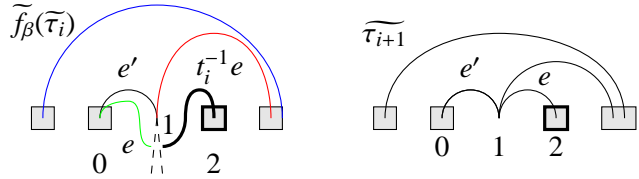
\begin{figure}[htbp]
 \begin{tikzpicture}[scale=0.75, inner sep=1mm]

\filldraw[xshift=.8cm,yshift=-4.2cm,fill=white!90!black,smooth] (0,0) rectangle (0.4,0.4);
\filldraw[xshift=1.8cm,yshift=-4.2cm,fill=white!90!black,smooth] (0,0) rectangle (0.4,0.4);
\filldraw[very thick,xshift=3.8cm,yshift=-4.2cm,fill=white!90!black,smooth] (0,0) rectangle (0.4,0.4);
\filldraw[xshift=4.8cm,yshift=-4.2cm,fill=white!90!black,smooth] (0,0) rectangle (0.6,0.4);
\draw[smooth](2,-4) to[out=90,in=180] (2.5,-3.5) to[out=0,in=90] (3,-4);
\draw[xshift=1cm,smooth](2,-4) to[out=90,in=180] (2.5,-3.5) to[out=0,in=90] (3,-4);
\draw[smooth](2,-4) to[out=90,in=180] (2.5,-3.5) to[out=0,in=90] (3,-4);
\draw[smooth](3,-4) to[out=90,in=180] (4,-3) to[out=0,in=90] (5,-4);
\draw[smooth](1,-4) to[out=90,in=180] (3,-2.5) to[out=0,in=90] (5.2,-4);


\draw(2.5,-3.1) node{$e'$};
\draw(3.8,-3.3) node{$e$};
\draw(2,-4.5) node{$0$};
\draw(3,-4.5) node{$1$};
\draw(4,-4.5) node{$2$};
\draw(1,-2.5) node{$\widetilde{\tau_{i+1}}$};
\draw(-5,-2.3) node{$\widetilde{f_{\beta}}(\widetilde{\tau_{i}}$)};

\filldraw[xshift=-5.2cm,yshift=-4.2cm,fill=white!90!black,smooth] (0,0) rectangle (0.4,0.4);
\filldraw[xshift=-4.2cm,yshift=-4.2cm,fill=white!90!black,smooth] (0,0) rectangle (0.4,0.4);
\filldraw[very thick,xshift=-2.2cm,yshift=-4.2cm,fill=white!90!black,smooth] (0,0) rectangle (0.4,0.4);
\filldraw[xshift=-1.2cm,yshift=-4.2cm,fill=white!90!black,smooth] (0,0) rectangle (0.4,0.4);

\draw[xshift=-6cm,smooth](2,-4) to[out=90,in=180] (2.5,-3.5) to[out=0,in=90] (3,-4);

\draw[red,xshift=-6cm,smooth](3,-4) to[out=90,in=180] (4,-2.5) to[out=0,in=90] (5,-4);
\draw[blue,xshift=-6cm,smooth](1,-4) to[out=90,in=180] (3,-2) to[out=0,in=90] (5.2,-4);
\draw[green,smooth] (-4,-4) to[out=90,in=180] (-3.5,-3.7) to[out=0,in=180] (-3.1,-4.5);
\draw[very thick,smooth](-2.9,-4.5)
to[out=0,in=270] (-2.5,-4)to[out=90,in=180] (-2.25,-3.5)to[out=0,in=90] (-2,-4);
\draw[dashed] (-3,-4) to (-3.1,-5);
\draw[dashed] (-3,-4) to (-2.9,-5);


\draw(-3.5,-3.1) node{$e'$};
\draw(-1.9,-3.1) node{$t_{i}^{-1}e$};
\draw(-4,-4.8) node{$0$};
\draw(-2.8,-4.1) node{$1$};
\draw(-2,-4.8) node{$2$};
\draw(-3.5,-4.5) node{$e$};

\end{tikzpicture}
 \caption{
 \label{fig:edgeautomatonUPSTAIRS}
Lifting an edge of the automaton in case A.1.1 .}
\end{figure}


\textbf{Case B.1}. This case is very similar to the preceding one. Let us consider the basic type $\Gamma$ given by figure \ref{fig:BASICTYPEB.1.1}. We consider all possible types of train tracks arising from $\Gamma$, which are displayed in the same figure. Among these, let us consider the edge of the automaton  (\ref{E:edgeatproof}) where $(\tau_{i},h_{i},\varepsilon_{i})$ is the  type B.1.1 depicted in figure \ref{fig:BASICTYPEB.1.1} and the train track map $T_{i}$ represents the homeomorphism $f_{\beta}$ that standardizes the train track $F_{ee'}(\tau_{i})$ that arises when folding edge $e$ over edge $e'$. 


\begin{figure}[htbp]
 \begin{tikzpicture}[scale=0.75, inner sep=1mm]


\foreach \x in {-4,-3,-2,-1}
	\draw[smooth, red] (\x,0) to[out=270,in=90] (\x-.1,-.1) to[out=270,in=180] (\x,-.25) to[out=0,in=270] (\x+.1,-.1) to[out=90,in=270] (\x,0);
\draw[smooth, very thick] (-3,0) to[out=90,in=180] (-2.5,.5) to[out=0,in=90] (-2,0); 	
\draw[smooth, very thick] (-4,0) to[out=90,in=180] (-3.5,.5) to[out=0,in=90] (-3,0); 


\draw(-3.5,.8) node{$e$};
\draw(-2.5,.8) node{$e'$};
\draw(-4,-.5) node{$v_{2}$};
\draw(-3,-.5) node{$v_{0}$};
\draw(-2,-.5) node{$v_{1}$};
\draw(-3,-1) node{Basic type $\Gamma$};

\begin{scope}
\filldraw[xshift=.8cm,yshift=-.2cm,fill=white!90!black,smooth] (0,0) rectangle (0.4,0.4);
\filldraw[xshift=1.8cm,yshift=-.2cm,fill=white!90!black,smooth] (0,0) rectangle (0.4,0.4);
\filldraw[xshift=2.8cm,yshift=-.2cm,fill=white!90!black,smooth] (0,0) rectangle (0.4,0.4);
\filldraw[xshift=3.8cm,yshift=-.2cm,fill=white!90!black,smooth] (0,0) rectangle (0.4,0.4);


\draw[xshift=5cm,smooth, very thick] (-3,0) to[out=90,in=180] (-2.5,.5) to[out=0,in=90] (-2,0); 	
\draw[xshift=5cm,smooth, very thick] (-4,0) to[out=90,in=180] (-3.5,.5) to[out=0,in=90] (-3,0); 
\draw[smooth, ] (1,0) to[out=90,in=180] (2.5,1.5) to[out=0,in=90] (4,0);
\draw(2,-.8) node{Type B.1.1};
\end{scope}

\begin{scope}[xshift=-3cm,yshift=-3cm]
\filldraw[xshift=.8cm,yshift=-.2cm,fill=white!90!black,smooth] (0,0) rectangle (0.4,0.4);
\filldraw[xshift=1.8cm,yshift=-.2cm,fill=white!90!black,smooth] (0,0) rectangle (0.4,0.4);
\filldraw[xshift=2.8cm,yshift=-.2cm,fill=white!90!black,smooth] (0,0) rectangle (0.4,0.4);
\filldraw[xshift=3.8cm,yshift=-.2cm,fill=white!90!black,smooth] (0,0) rectangle (0.4,0.4);


\draw[xshift=5cm,smooth, very thick] (-3,0) to[out=90,in=180] (-2.5,.5) to[out=0,in=90] (-2,0); 	
\draw[xshift=5cm,smooth, very thick] (-4,0) to[out=90,in=180] (-3.5,.5) to[out=0,in=90] (-3,0); 
\draw[smooth, ] (1,0) to[out=90,in=180] (2.5,1.5) to[out=0,in=90] (4,0);
\draw(2,-.8) node{Type B.1.2};
\end{scope}

\end{tikzpicture}
 \caption{
 \label{fig:BASICTYPEB.1.1}
Types B.1.1 and B.1.2 arising from simple type B.1 .}
\end{figure}
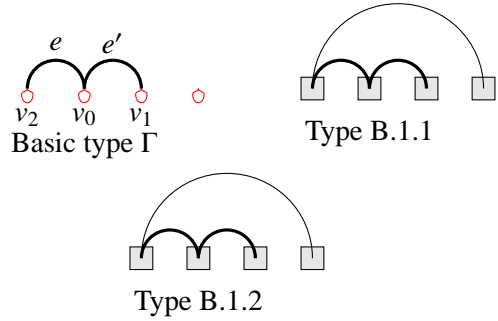


In figure \ref{fig:B.1.1AUTOMATONEDGE} we depict this edge of the automaton in detail. Remark that $v_{\rm fix}=v_{2}$, $v_{\rm end}=v_{1}$, $f=e$, $f'=e'$ and the subgraph $N(T_{i})$ is highlighted in bold. We remark that $f\in N(T_{i})$, hence we are in case 2 from definition \ref{DEF:VECTV}. This is the main difference with the case we treated in case A.1 .

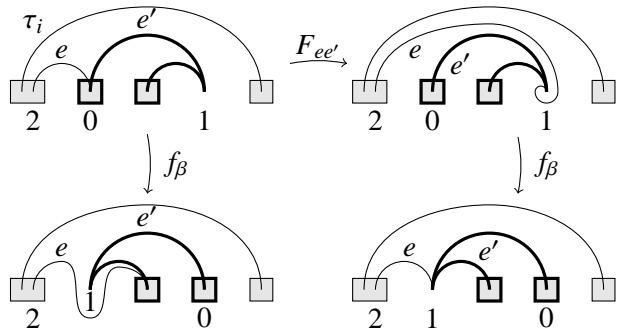
\begin{figure}[htbp]
 \begin{tikzpicture}[scale=0.75, inner sep=1mm]

\begin{scope}
\filldraw[xshift=-5.4cm,yshift=-.2cm,fill=white!90!black,smooth] (0,0) rectangle (0.6,0.4);
\filldraw[very thick,xshift=-4.2cm,yshift=-.2cm,fill=white!90!black,smooth] (0,0) rectangle (0.4,0.4);
\filldraw[very thick,xshift=-3.2cm,yshift=-.2cm,fill=white!90!black,smooth] (0,0) rectangle (0.4,0.4);
\filldraw[xshift=-1.2cm,yshift=-.2cm,fill=white!90!black,smooth] (0,0) rectangle (0.4,0.4);
\draw[smooth] (-5.2,0) to[out=90,in=180] (-3,1.5) to[out=0,in=90] (-1,0); 
\draw[smooth] (-5,0) to[out=90,in=180] (-4.5,.5) to[out=0,in=90] (-4,0); 
\draw[smooth,very thick] (-4,0) to[out=90,in=180] (-3,1) to[out=0,in=90] (-2,0);
\draw[smooth,very thick] (-3,0) to[out=90,in=180] (-2.5,.5) to[out=0,in=90] (-2,0);  
\draw(-5,-.5) node{$2$};
\draw(-4,-.5) node{$0$};
\draw(-2,-.5) node{$1$};
\draw(-4.5,.7) node{$e$};
\draw(-3,1.3) node{$e'$};
\draw(0,.8) node{$F_{ee'}$};
\draw(-5,1.2) node{$\tau_{i}$};
\draw(-2.5,-1.3) node{$f_{\beta}$};

\draw[smooth,->] (-.5,.5) to[out=10,in=170] (.5,.5);
\draw[smooth,->] (-3,-.75) to[out=280,in=80] (-3,-1.8);
\end{scope}


\begin{scope}[xshift=6cm]
\filldraw[xshift=-5.4cm,yshift=-.2cm,fill=white!90!black,smooth] (0,0) rectangle (0.6,0.4);
\filldraw[very thick,xshift=-4.2cm,yshift=-.2cm,fill=white!90!black,smooth] (0,0) rectangle (0.4,0.4);
\filldraw[very thick,xshift=-3.2cm,yshift=-.2cm,fill=white!90!black,smooth] (0,0) rectangle (0.4,0.4);
\filldraw[xshift=-1.2cm,yshift=-.2cm,fill=white!90!black,smooth] (0,0) rectangle (0.4,0.4);
\draw[smooth] (-5.2,0) to[out=90,in=180] (-3,1.5) to[out=0,in=90] (-1,0); 
\draw[smooth] (-5,0) to[out=90,in=180] (-3,1.2) to[out=0,in=90] (-1.8,0) to[out=270,in=0] (-2,-.2) to[out=180,in=270] (-2.2,0) to[out=90,in=180] (-2.1,.1)to[out=0,in=90] (-2,0); 
\draw[smooth,very thick] (-4,0) to[out=90,in=180] (-3,1) to[out=0,in=90] (-2,0);
\draw[smooth,very thick] (-3,0) to[out=90,in=180] (-2.5,.5) to[out=0,in=90] (-2,0);  
\draw(-5,-.5) node{$2$};
\draw(-4,-.5) node{$0$};
\draw(-2,-.5) node{$1$};
\draw(-4.3,.7) node{$e$};
\draw(-3.5,.5) node{$e'$};
\draw(-2,-1.3) node{$f_{\beta}$};
\draw[smooth,->] (-2.5,-.8) to[out=280,in=80] (-2.5,-1.8);
\end{scope}


\begin{scope}[yshift=-3.5cm]
\filldraw[xshift=-5.4cm,yshift=-.2cm,fill=white!90!black,smooth] (0,0) rectangle (0.6,0.4);
\filldraw[very thick,xshift=-3.2cm,yshift=-.2cm,fill=white!90!black,smooth] (0,0) rectangle (0.4,0.4);
\filldraw[very thick,xshift=-2.2cm,yshift=-.2cm,fill=white!90!black,smooth] (0,0) rectangle (0.4,0.4);
\filldraw[xshift=-1.2cm,yshift=-.2cm,fill=white!90!black,smooth] (0,0) rectangle (0.4,0.4);
\draw[smooth] (-5.2,0) to[out=90,in=180] (-3,1.5) to[out=0,in=90] (-1,0); 
\draw[smooth] (-5,0) to[out=90,in=180] (-4.5,.5) to[out=0,in=180] (-4,-.5) to[out=0,in=180] (-3.5,.4) to[out=0,in=90] (-3,0); 
\draw[smooth,very thick] (-4,0) to[out=90,in=180] (-3,1) to[out=0,in=90] (-2,0);
\draw[xshift=-1cm,smooth,very thick] (-3,0) to[out=90,in=180] (-2.5,.5) to[out=0,in=90] (-2,0);  
\draw(-5,-.5) node{$2$};
\draw(-4,-.2) node{$1$};
\draw(-2,-.5) node{$0$};
\draw(-4.5,.7) node{$e$};
\draw(-3,1.3) node{$e'$};
\end{scope}


\begin{scope}[xshift=6cm,yshift=-3.5cm]
\filldraw[xshift=-5.4cm,yshift=-.2cm,fill=white!90!black,smooth] (0,0) rectangle (0.6,0.4);
\filldraw[very thick,xshift=-3.2cm,yshift=-.2cm,fill=white!90!black,smooth] (0,0) rectangle (0.4,0.4);
\filldraw[very thick,xshift=-2.2cm,yshift=-.2cm,fill=white!90!black,smooth] (0,0) rectangle (0.4,0.4);
\filldraw[xshift=-1.2cm,yshift=-.2cm,fill=white!90!black,smooth] (0,0) rectangle (0.4,0.4);
\draw[smooth] (-5.2,0) to[out=90,in=180] (-3,1.5) to[out=0,in=90] (-1,0); 
\draw[smooth] (-5,0) to[out=90,in=180] (-4.5,.5) to[out=0,in=90] (-4,0);
\draw[smooth,very thick] (-4,0) to[out=90,in=180] (-3,1) to[out=0,in=90] (-2,0);
\draw[xshift=-1cm,smooth,very thick] (-3,0) to[out=90,in=180] (-2.5,.5) to[out=0,in=90] (-2,0);  
\draw(-5,-.5) node{$2$};
\draw(-4,-.5) node{$1$};
\draw(-2,-.5) node{$0$};
\draw(-4.4,.7) node{$e$};
\draw(-3,.7) node{$e'$};
\end{scope}
\end{tikzpicture}
 \caption{
 \label{fig:B.1.1AUTOMATONEDGE}
A detailed edge of the automaton in case B.1.1.}
\end{figure}


 Now consider the lift $\widetilde{\mathbf{D}_{n}}\stackrel{\widetilde{f_{\beta}}}{\to}{\widetilde{\mathbf{D}_{n}}}$ which fixes the point in the fiber over $v_{\rm fix}$ contained in $D_{e_{H}}$. To compute $M(\widetilde{T_{i}})$ we present figure \ref{fig:B.1.1AUTOMATONEDGEupstairs} where $\widetilde{f_{\beta}}(\widetilde{\tau_{i}})$ and $\widetilde{h_{i+1}}(\widetilde{\tau_{i+1}})$ are both depicted at the leaf $D_{e_{H}}$ of level zero. Remark that \emph{at the leaf of level zero}, except for real edges in $\widetilde{f_{\beta}}(\widetilde{\tau_{i}})$ depicted in bold, all real edges in $\widetilde{f_{\beta}}(\widetilde{\tau_{i}})$ are labeled by $\widetilde{\varepsilon_{i}}$ with letters in $\mathbb{A}$. Labels given to the edges in bold by $\widetilde{\varepsilon_{i}}$  are of the form $t_{i}^{-1}x$, 
where $x$ ranges among labels in $\mathbb{A}$ reserved for real edges in $N(T_{i})\cup f$ and  $t(X)=t_{i}$. Here, $X\in\mathbb{A}_{\rm prong}$ is the label (given by $\varepsilon_{i}$)  of the unique infinitesimal edge of $\tau_{i}$ enclosing a puncture and that is incident to $v_{\rm end}$. Now remark that 
the projection of the real edges depicted in bold in figure \ref{fig:B.1.1AUTOMATONEDGEupstairs} to the base $\mathbf{D}_{n}$ are precisely the edges in  $f_{\beta}(\tau_{i})$  contained in $N(T_{i})$. From this data we can perform a straightforward calculation which shows that in this case $M(\widetilde{T_i})=M(T_i){\rm Diag (t(\eta_i(v_i)))}$,  where all entries in the diagonal matrix ${\rm Diag (t(\eta_i(v_i)))}$ different from 1 are equal to $t_{i}$. The case when $T_{i}$ represents the homeomorphism $f_{\beta}$ that standardizes the train track $F_{ee'}(\tau_{i})$ that arises when folding $'e$ over $e$ is treated in the same way. 


\begin{figure}[htbp]
 \begin{tikzpicture}[scale=0.75, inner sep=1mm]


\begin{scope}
\filldraw[xshift=-5.4cm,yshift=-.2cm,fill=white!90!black,smooth] (0,0) rectangle (0.6,0.4);
\filldraw[very thick,xshift=-3.2cm,yshift=-.2cm,fill=white!90!black,smooth] (0,0) rectangle (0.4,0.4);
\filldraw[very thick,xshift=-2.2cm,yshift=-.2cm,fill=white!90!black,smooth] (0,0) rectangle (0.4,0.4);
\filldraw[xshift=-1.2cm,yshift=-.2cm,fill=white!90!black,smooth] (0,0) rectangle (0.4,0.4);
\draw[smooth] (-5.2,0) to[out=90,in=180] (-3,1.7) to[out=0,in=90] (-1,0); 
\draw[smooth] (-5,0) to[out=90,in=180] (-4.5,.5) to[out=0,in=180] (-4.1,-.5);

\draw[very thick,smooth] (-3.9,-.5) 
 to[out=0,in=180] (-3.5,.4) to[out=0,in=90] (-3,0);

\draw[dashed](-4,0) to (-4.1,-1);
\draw[dashed](-4,0) to (-3.9,-1);
 
\draw[smooth,very thick] (-4,0) to[out=90,in=180] (-3,1) to[out=0,in=90] (-2,0);
\draw[xshift=-1cm,smooth,very thick] (-3,0) to[out=90,in=180] (-2.5,.5) to[out=0,in=90] (-2,0);  
\draw(-5,-.5) node{$2$};
\draw(-4,-1.2) node{$1$};
\draw(-2,-.5) node{$0$};
\draw(-4.5,.7) node{$e$};
\draw(-3,1.3) node{$t_{i}^{-1}e'$};
\draw(-3.3,-.6) node{$t_{i}^{-1}e$};
\end{scope}


\begin{scope}[xshift=6cm]
\filldraw[xshift=-5.4cm,yshift=-.2cm,fill=white!90!black,smooth] (0,0) rectangle (0.6,0.4);
\filldraw[very thick,xshift=-3.2cm,yshift=-.2cm,fill=white!90!black,smooth] (0,0) rectangle (0.4,0.4);
\filldraw[very thick,xshift=-2.2cm,yshift=-.2cm,fill=white!90!black,smooth] (0,0) rectangle (0.4,0.4);
\filldraw[xshift=-1.2cm,yshift=-.2cm,fill=white!90!black,smooth] (0,0) rectangle (0.4,0.4);
\draw[smooth] (-5.2,0) to[out=90,in=180] (-3,1.5) to[out=0,in=90] (-1,0); 
\draw[smooth] (-5,0) to[out=90,in=180] (-4.5,.5) to[out=0,in=90] (-4,0);
\draw[smooth,very thick] (-4,0) to[out=90,in=180] (-3,1) to[out=0,in=90] (-2,0);
\draw[xshift=-1cm,smooth,very thick] (-3,0) to[out=90,in=180] (-2.5,.5) to[out=0,in=90] (-2,0);  
\draw(-5,-.5) node{$2$};
\draw(-4,-.5) node{$1$};
\draw(-2,-.5) node{$0$};
\draw(-4.4,.7) node{$e$};
\draw(-3,.7) node{$e'$};
\end{scope}

\end{tikzpicture}
 \caption{
 \label{fig:B.1.1AUTOMATONEDGEupstairs}
Lifting an edge of the automaton in case B.1.1 .}
\end{figure}
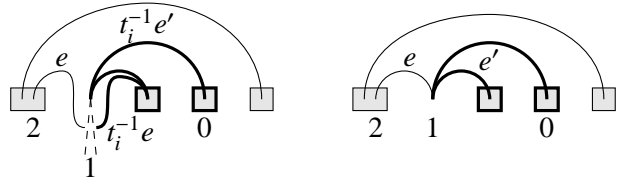


The rest of the cases are treated as follows:
\begin{enumerate}
\item Pick a basic type $\Gamma$ from the list presented in Appendix \ref{APPENDIX:CASES} and consider $(\tau_{i},h_{i},\varepsilon_{i})$ one of the finitely many possible types of train tracks that can be 
constructed from $\Gamma$. 
\item Consider an edge (\ref{E:edgeatproof}) of the folding automaton starting from the train track chosen in the preceding step and where $T_{i}$ comes from either the folding $F_{ee'}$ or $F_{e'e}$ \footnote{Remark that for some simple types only one of these two foldings is possible.}, and represents a standardizing homeomorphism $\mathbf{D}_{n}\stackrel{f_{\beta}}{\to}\mathbf{D}_{n}$. 
\item Pick the lift $\widetilde{\mathbf{D}_{n}}\stackrel{\widetilde{f_{\beta}}}{\to}{\widetilde{\mathbf{D}_{n}}}$ which fixes the point in the fiber over $v_{\rm fix}$ contained in $D_{e_{H}}$ and the corresponding train track map of the form (\ref{E:liftTi}) representing it. Then, depending on the case of definition \ref{DEF:VECTV}, perform analogous calculations to the ones presented in cases A.1 and B.1 .
\end{enumerate}
To finish the proof consider any edge (\ref{E:edgeatproof}) in the decorated folding automaton with extra information $(\pi_{i},v_{i})$. 
If $e$ is an infinitesimal edge in $\tau_{i+1}$, then we can define $\varepsilon_{i+1}(e):=\pi_{i}(\varepsilon_{i}(e))$. 
Hence $w_{i+1}=\pi_i\circ\dots\circ\pi_1(v_{i+1})$. This implies the recursive nature of formula  (\ref{E:DiagFormulaTh}). 
\end{proof}

\begin{remark}
Let $t$ be the variable of $H$ and $u$ corresponds to the lift $\widetilde{\psi}$. Then
$$
\Theta_F(t,u)\in \Z[G]= \Z[t]\oplus \Z[u].
$$
From Remark~\ref{R:dlifts} we conclude that picking different lifts of the standardizing homeomorphism $f_{\beta}$ results in multiplying $M(\widetilde{T})$ by some $t_{0}\in H$. 
This does not affect the expression for the Teichm\"uller polynomial since $\Theta_F(t,u) = \det(uI-t_{0}M(\widetilde{T}))=t_{0}^b\det(t_{0}^{-1}uI-M(\widetilde{T}))=t_{0}^{(n-1)}\Theta_{F}(u',t)$, where $u'=t_{0}^{-1}u$ is the coordinate corresponding to the other lift.
\end{remark}

\begin{remark}
Observe that we can formally apply above theorem to any isotopy class (not necessary pseudo-Anosov).
See the example below.
\end{remark}

\begin{example}
	\label{ex:first}
Let us consider the class $[f_\beta]$ where $\beta=\sigma_2^2$ is a braid in $B_{3}$. (see Figure~\ref{fig:decorated:B3}). The 
corresponding loop is
$$
(\tau,\varepsilon)\stackrel{T_{1}}\longrightarrow(\tau,\varepsilon)\stackrel{T_{2}}{\longrightarrow} (\tau,\varepsilon)
$$
In this example the map $t:\{A,B,C\}\to H$ is given by $t(\alpha)=t_{\alpha}$ for all $\alpha\in {\mathbb{A}_{\mathrm{prong}}} = \{A,B,C\}$ since the permutation on punctures defined by $f_{\beta}$ is the identity. In other words, $H$ is isomorphic to $\Z^{3}$. 
A direct calculation shows that $w_{1}=\eta_{1}(1,C^{+})=(1,C^{+})$ since $\eta_{1}=\mathrm{Id}_{|\mathbb{A}_{\mathrm{prong}}}$. 
Hence $t(w_{1})=(1,t_{C})$. On the other hand $w_{2}=\eta_{2}(1,C^{+})=(1,B^{+})$ since $\eta_{2}(A,B,C)=(A,C,B)$. Hence 
$t(w_{2})=(1,t_{B})$. A direct calculation also shows that the matrix associated to $T_{i}$ is $\left(\begin{smallmatrix} 1 & 1 \\ 0 & 1 \end{smallmatrix}\right)$
for $i=1,2$. Theorem~\ref{thm:second:main} gives
$$
M(\sigma^2_2) = \left(\begin{smallmatrix} 1 & 1 \\ 0 & 1 \end{smallmatrix}\right) \cdot
\left(\begin{smallmatrix} 1 & 0 \\ 0 & t_C \end{smallmatrix}\right) \cdot
\left(\begin{smallmatrix} 1 & 1 \\ 0 & 1 \end{smallmatrix}\right) \cdot
\left(\begin{smallmatrix} 1 & 0 \\ 0 & t_B \end{smallmatrix}\right) = 
\left(\begin{smallmatrix} 1 & t_B+t_Bt_C \\ 0 & t_Bt_C \end{smallmatrix}\right).
$$
Similarly for $\beta=\sigma_1^{-2}$ we have $t(\alpha)=t_{\alpha}$ and $H$ isomorphic to $\Z^{3}$. A direct calculation shows that 
$w_{1}=\eta_{1}(A^{-1},1)=(A^{-1},1)$. Hence $t(w_{1})=(t_{A}^{-1},1)$. On the other hand $w_{2}=\eta_{2}(A^{-1},1)=(B^{-1},1)$
with $\eta_{2}(A,B,C)=(B,A,C)$. Hence $t(w_{2})=(t_{B}^{-1},1)$.
A direct calculation also shows that the matrix associated to $T_{i}$ is $\left(\begin{smallmatrix} 1 & 0 \\ 1 & 1 \end{smallmatrix}\right)$
for $i=1,2$. Theorem~\ref{thm:second:main} then gives
$$
M(\sigma_1^{-2}) = \left(\begin{smallmatrix} 1 & 0 \\ 1 & 1 \end{smallmatrix}\right) \cdot
\left(\begin{smallmatrix} t_A^{-1} & 0 \\ 0 & 1 \end{smallmatrix}\right) \cdot
\left(\begin{smallmatrix} 1 & 0 \\ 1 & 1 \end{smallmatrix}\right) \cdot
\left(\begin{smallmatrix} t_B^{-1} & 0 \\ 0 & 1 \end{smallmatrix}\right) = 
\left(\begin{smallmatrix} t_A^{-1}t_B^{-1} & 0 \\ t_A^{-1}t_B^{-1}+t_B^{-1} & 1 \end{smallmatrix}\right).
$$

\begin{remark}
Compare with formula  in~\cite[\S 11]{Mc} (here the presentation of the group $H$ is different):
$$
M(\sigma_2^{-2})=\left(\begin{smallmatrix} 1 & 0 \\ t^{-1}_2+t^{-1}_2t^{-1}_3 & t^{-1}_2t^{-1}_3 \end{smallmatrix}\right)
\qquad \textrm{and} \qquad
M(\sigma^2_1)=\left(\begin{smallmatrix} t_1t_2 & t_1t_2+t_2 \\ 0&1 \end{smallmatrix}\right).
$$
\end{remark}
For instance our algorithm applied to $\sigma_1^{-2}\sigma_2^6$ given by the loop
$$
\left( (\tau,\varepsilon)\stackrel{T_1}\longrightarrow(\tau,\varepsilon)\right)^3\stackrel{T_2}{\longrightarrow} (\tau,\varepsilon),
$$
(where $T_1$ represents $\sigma_2^2$ and $T_2$ represents $\sigma_1^{-2}$) gives 
that the Teichm\"uller polynomial is 
$$
\Theta_{F}(t_A,t_B,t_C,u)=det\left(
u\cdot \Id-
\left(\begin{smallmatrix} 1 & t_B+t_Bt_C \\ 0 & t_Bt_C \end{smallmatrix}\right)^3
\cdot \left(\begin{smallmatrix} t_A^{-1}t_B^{-1} & 0 \\ t_A^{-1}t_B^{-1}+t_B^{-1} & 1 \end{smallmatrix}\right)
\right).
$$

\end{example}

\section{Topology of a fiber}
\label{ss:TN}

In this section we provide a way to compute the topology (genus, number and type of 
singularities) of a fiber. We begin by introducing some notation that will be used throughout the
sequel. As usual we will consider a mapping torus $M_{\pa}=\mathbf{D}_{n}\times [0,1]/(x,1)\sim(\pa(x),0)$
induced by a pseudo-Anosov braid $\beta\in B_n$. It turns out that there is a natural model for
$M_\pa$ as a link complement $S^{3}\setminus \mathcal N(L)$ where $\mathcal N(L)$ is a regular 
neighborhood of a link $L$ in the $3$-sphere. To construct the link $L=L_\beta$, simply close the braid 
$\beta$ representing $\pa$ while passing it through an unknot $\alpha$ (representing the boundary of the disc
$\mathbf{D}_{n}$).
Let $\fibb$ be a fibration with  
monodromy $\varphi : \Sigma \rightarrow \Sigma$. Recall that, if $\Sigma$ has genus $g$ and $b$ boundary components, then $ \chi_{-}(\Sigma)=2g+b-2$. Hence the Thurston norm does not determine completely the topology of $\Sigma$. To achieve this we have 
to determine one of the numbers $g$ or $b$ (the surface $\Sigma$ is orientable).

\subsection{Computing the number of boundary components}
\label{SS:CNBC}
Since $M_{\pa}$ is homeomorphic to the link complement $S^{3}\setminus \mathcal N(L)$
we can describe its homology group easily. First there is an embedding $i:\mathbf{D}_{n}\hookrightarrow M$ such that the image of the exterior boundary of $\mathbf{D}_{n}$ spans $\alpha$ and $i(\mathbf{D}_{n})$ is punctured by the $n$ strands of $\beta$. 
The boundary of $M$ is a union of tori $T_{1},\ldots,T_{r}$,  where $r=b_{1}(M)$ (viewed as the boundary of a
regular neighborhood of link components $\partial \mathcal N(L_i)$). Let $[S_{1}],\ldots,[S_{r}]$ be a basis of 
$H_{2}(M,\partial M;\R)$ ({\em e.g.} take Seifert surfaces whose boundary is $T_i$). By 
convention we normalize so that $S_r=i(\mathbf{D}_{n})$. This normalization implies that $T_{r}$ comes from the unknot $\alpha$. The meridians 
of components of $L_\beta$ give a natural basis for $H_1(M,\Z)$ \cite{Hillman}.
 
Now let $\{[m_{i}],[l_{i}]\}$ be a meridian and longitude basis for 
$H_{1}(T_{i};\R)$, where the orientation of $l_{i}\subset\partial S_{i}$ is induced by the orientation of $[S_{i}]$.We consider the long exact sequence of the homology groups of the pair $(M,\partial M)$. We write the boundary map:
$$
\partial_{*}: H_{2}(M,\partial M;\R)\to H_{1}(\partial M;\R).
$$
On the chosen basis, for any $i=1,\dots,r$, one has 
$$
\partial_{*}[S_{i}]=\sum_{j=1}^{r}(a_{ij}[m_{j}]+b_{ij}[l_{j}])
$$
with $a_{ij},b_{ij}\in\Z$. We set $A=(a_{ij})_{i,j=1,\dots,r}$ and $B=(b_{ij})_{i,j=1,\dots,r}$.

\begin{proposition}
\label{prop:cc:boundary}
Let $\kappa = \sum_{i=1}^{r}c_{i}[S_{i}]$, where $c=(c_1,\dots,c_r)\in\Z^r$, be an integral homology class
(not necessarily primitive). Then for any embedded surface $S\subset M_\pa$ (not necessarily minimal representative)
such that $[S]=\kappa$, and for any $j=1,\dots,r$, the number of connected components of 
$S \cap T_j$ is $\gcd(a_j,b_j)$ where $(a_1,\dots,a_r)=cA$ and $(b_1,\dots,b_r)=cB$.
\end{proposition}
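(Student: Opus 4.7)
The plan is to compute the homology class of $\partial S \cap T_j$ in $H_1(T_j;\mathbb{Z})$ using naturality of the boundary map, and then use the topology of simple closed curves on a torus to count components.

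First, I would make $S$ transverse to each boundary torus $T_j$, so that $S \cap T_j = \partial S \cap T_j$ is a disjoint union of oriented simple closed curves. The long exact sequence of the pair $(M_\pa, \partial M_\pa)$ provides the boundary map $\partial_* : H_2(M_\pa, \partial M_\pa; \mathbb{Z}) \to H_1(\partial M_\pa; \mathbb{Z}) = \bigoplus_{j=1}^r H_1(T_j; \mathbb{Z})$, and by construction $\partial_*[S]$ is represented in each $T_j$ by the oriented $1$-cycle $\partial S \cap T_j$. Using $\mathbb{Z}$-linearity and $\kappa = \sum_i c_i [S_i]$, one computes
\[
\partial_* \kappa \;=\; \sum_{i=1}^r c_i \sum_{j=1}^r (a_{ij}[m_j] + b_{ij}[l_j]) \;=\; \sum_{j=1}^r (a_j [m_j] + b_j [l_j]),
\]
with $a_j = (cA)_j$ and $b_j = (cB)_j$, so $[\partial S \cap T_j] = a_j[m_j] + b_j[l_j]$ in $H_1(T_j;\mathbb{Z})$.

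Next, I would invoke the classical fact that disjoint essential simple closed curves on a torus are mutually parallel, hence share a common primitive homology class up to sign. After removing null-homotopic components of $\partial S \cap T_j$ (innermost such curves either bound disks on $S$ that can be capped off, or are $\partial$-parallel and can be isotoped off $T_j$) and performing annular surgeries that eliminate pairs of oppositely oriented parallel boundary curves (such a pair cobounds an embedded annulus in $T_j$ disjoint from $\mathrm{int}(S)$, allowing a cut-and-paste that preserves $[S]$ while reducing the component count), one may assume every component of $S\cap T_j$ is essential and that the family is coherently oriented. All components are then parallel copies of a single primitive oriented simple closed curve of class $(p_j, q_j)$ with $\gcd(p_j, q_j) = 1$, so if there are $d$ of them then $d(p_j, q_j) = (a_j, b_j)$, forcing $d = \gcd(a_j, b_j)$.

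The main subtle point is the second step: an arbitrary embedded representative may carry boundary components that are null-homotopic on $T_j$ or parallel but oppositely oriented, and one must check that both pathologies can be removed by isotopies and compressions/surgeries preserving the homology class. This is a standard innermost-curve/innermost-annulus argument, but it is precisely where the phrase \emph{not necessarily minimal representative} has to be interpreted as ``any representative in boundary-efficient position'', so I would isolate this reduction as a separate lemma before putting the counting argument together.
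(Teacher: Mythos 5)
Your argument follows the same route as the paper's proof: compute $\partial_*[S]$ using the long exact sequence of the pair $(M,\partial M)$ and $\Z$-linearity, observe that $S\cap T_j$ is a family of parallel oriented simple closed curves in $T_j$, and divide by the common primitive class to obtain $\gcd(a_j,b_j)$. You are in fact more careful than the paper, which simply asserts that $S\cap T_j$ is a union of parallel coherently oriented simple closed curves; your observation that the literal statement fails for an arbitrary representative (one can always push a finger through a trivial disk on $T_j$ without changing $[S]$), so that the proposition must implicitly be read for a representative isotoped into boundary-efficient position via the innermost-curve and innermost-annulus reductions you outline, is a genuine refinement of a step the paper glosses over.
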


\begin{proof}[Proof of Proposition~\ref{prop:cc:boundary}] 
Writing $[S]=\sum_{i=1}^{r}c_{i}[S_{i}] \in H_{2}(M,\partial M;\R)$, elementary linear algebra gives
$$
\partial_{*}([S])= \sum_{i=1}^{r}\left( \sum_{j=1}^{r}c_{i}a_{ij}[m_{j}]+c_{i}b_{ij}[l_{j}]\right).
$$
Now $S\cap T_{j}\subset\partial S$ is a union of oriented parallel simple closed
curves. Hence its homology class is given by
$$
(\sum_{i=1}^{r}c_{i}a_{ij})[m_{j}]+(\sum_{i=1}^{r}c_{i}b_{ij})[l_{j}]\in H_{1}(T_{j};\R)
$$
Thus the number of connected components of $S\cap T_{j}$ is given by
$$
\gcd(\sum_{i=1}^{r}c_{i}a_{ij},\sum_{i=1}^{r}c_{i}b_{ij}).
$$
The proposition is proved.

\end{proof}

\begin{remark}
	\label{R:links}
In our situation, since $S_{i}$ is a Seifert surface whose boundary is the torus $T_i$ one has:
$$
\partial_{*}[S_{i}]=[l_i] - \sum_{j=1}^{r}\mathrm{Lk}(L_i,L_j)[m_{j}],
$$
where $\mathrm{Lk}(L_i,L_k)$ is the linking number of the two closed curves $L_i$ and 
$L_j$ with orientations given by the orientations of $[l_{i}]$ and $[l_{j}]$. In other words:
$B=\mathrm{Id}$ and $A=(\mathrm{Lk}(L_i,L_j)))_{i,j=1,\dots,r}$.
\end{remark}

We end this section with the following corollary on the connected components of 
$\Sigma \cap T_i$.

\begin{corollary}
\label{cor:slope}
For any $[\Sigma]=\sum_{i=1}^{r}c_{i}[S_{i}] \in H_{2}(M,\partial M;\R)$ where 
$c=(c_1,\dots,c_r)\in\Z^r$ we let $a=cA$ and $b=cB$ as above. Then each
connected component of $\Sigma \cap T_j$ is identified to a curve (well defined up to isotopy):
$$
c_{\frac{p}{q}}=p[m_{j}]+q[l_{j}]\in H_{1}(T_{j};\R), \qquad \textrm{with} \qquad p=\frac{a_j}{\gcd(a_j,b_j)}, \ q=\frac{b_j}{\gcd(a_j,b_j)}.
$$
\end{corollary}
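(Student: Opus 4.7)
The plan is to reduce the corollary to an elementary computation on the torus $T_j$, using the information already extracted during the proof of Proposition~\ref{prop:cc:boundary}. Recall from that proof that the total homology class of $\Sigma \cap T_j$ in $H_1(T_j;\R)$ is $a_j[m_j]+b_j[l_j]$, and that the number of connected components of $\Sigma \cap T_j$ is $d := \gcd(a_j,b_j)$.

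First, I would argue that every connected component of $\Sigma \cap T_j$ is an essential simple closed curve on the torus $T_j$. Because $\Sigma$ is a fiber of the fibration $M_\psi \to S^1$, it is incompressible and boundary-incompressible; in particular no component of $\partial\Sigma = \bigsqcup_j (\Sigma\cap T_j)$ bounds a disk in $T_j$ (otherwise, by incompressibility, the curve would already bound a disk in $\Sigma$, contradicting the fact that fibers have no disk components in this setting). Thus each of the $d$ components is an essential simple closed curve on $T_j$.

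Next, I would invoke the elementary fact that any collection of pairwise disjoint essential simple closed curves on a torus is pairwise isotopic. Consequently all $d$ components of $\Sigma\cap T_j$ are isotopic to a single essential simple closed curve $\gamma_j$, whose homology class $[\gamma_j]$ is primitive in $H_1(T_j;\Z)$. Write $[\gamma_j] = p[m_j]+q[l_j]$ with $\gcd(p,q)=1$. Summing the contributions of the $d$ parallel components, and comparing with the total class computed in the proof of Proposition~\ref{prop:cc:boundary}, we obtain
\[
 d\,(p[m_j]+q[l_j]) \;=\; a_j[m_j]+b_j[l_j],
\]
which forces $p = a_j/d$ and $q = b_j/d$. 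These are integers by the definition of $d=\gcd(a_j,b_j)$, and coprime by construction, so $[\gamma_j] = c_{p/q}$ as required.

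The main obstacle is the first step, namely ensuring that the components are genuinely essential and hence that the ``parallel copies'' picture is valid; everything else is linear algebra on $H_1(T_j;\Z)\cong\Z^2$. This reduces to the standard fact that fibers of a fibration are incompressible surfaces in an irreducible 3-manifold, which applies here since $M_\psi$ is hyperbolic. Once this is in place, the primitivity of $[\gamma_j]$ and the divisibility $d\mid a_j$, $d\mid b_j$ make the identification of the slope immediate.
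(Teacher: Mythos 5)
Your proof is correct and follows the same route the paper intends; the paper treats the corollary as immediate from the proof of Proposition~\ref{prop:cc:boundary}, where the statement that $S\cap T_j$ is a union of oriented parallel simple closed curves is asserted without justification, and the class $a_j[m_j]+b_j[l_j]$ together with the count $d=\gcd(a_j,b_j)$ are extracted from it. You make the ``parallel copies'' picture explicit by first checking each boundary component is essential in $T_j$ (via incompressibility of the fiber / $\pi_1$-injectivity), then using that disjoint essential simple closed curves on a torus are isotopic with primitive homology class, and dividing out. One small point worth being explicit about, which the paper flags with the word ``oriented'': the identity $d\,(p[m_j]+q[l_j]) = a_j[m_j]+b_j[l_j]$ requires that all $d$ parallel components carry coherent orientations, so that their classes add without cancellation. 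For a fiber this holds because the boundary orientations are all induced from the same oriented fiber, and indeed that is the setting in which the corollary is applied. With that remark, your argument matches the paper's.
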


From the last corollary we make the following definition. If $T$ is a torus, and 
if $H_1(T,\Z)$ is equipped with its preferred basis given by meridian and longitude 
(denoted by $[m]$ and $[l]$) then the {\em slope} of an essential simple closed curve 
$[c]=p[m]+q[l]$ (with $\gcd(p,q)=1$) is $\frac{p}{q}$. Conversely for any {\em slope} 
$r\in \Q\cup\{\infty\}$ one defines the (isotopy class) $c_r$ of the corresponding simple closed curve. \medskip

\subsection{Computing the number and type of singularities of the fiber} Let $\fib$ be a fibration in $\R^{+}\cdot F$ with pseudo-Anosov monodromy $\phi$. In this section we explain how to compute the singularity data of the stable measured foliation of $\Sigma$ that is invariant by $\phi$ using the singularity data of the stable measured foliation of $\psi$. The arguments are based on work by Fried. \medskip

In the sequel we denote by $\F$ the stable measured foliation invariant by $\pa$. 
Up to isotopy one can assume that $\pa(\F)=\F$. This determines a $2$-dimensional lamination
$\mathcal{L}_{\pa}=\F\times\R/\left<(s,t)\sim(\pa(s),t-1)\right>$ obtained as the mapping torus of $\pa: \F\rightarrow \F$.
The ``vertical flow lines'' $\{s\}\times\R\subset \Sigma\times\R$ descend to the leaves of a $1$-dimensional 
foliation whose leaves will be called \emph{the flow lines} of $\pa$. Hence $\mathcal L_\pa$
is swept out by the leaves of the flow lines passing through $\F$. \medskip

We distinguish two cases: $\F$ has no singularities in the interior 
of $\mathbf{D}_{n}$ (see Proposition~\ref{prop:cc:singu}) or $\F$ has some singularities in 
the interior of $\mathbf{D}_{n}$ (see Proposition~\ref{prop:cc:singu:2}).
Any singularity $s$ of $\F$ in the boundary of $\mathbf{D}_{n}$ determines 
a closed curve $\gamma_s$ of slope $\frac{p_s}{q_s}$ on the torus $T_s \subset \partial M$, given by the 
flow line passing throughout $s$. See for example figures 4 and 5 in \cite{KT1}.

\begin{proposition}[$\F$ has no singularities in the interior of $\Dn$]
\label{prop:cc:singu}
 For any $[\Sigma]\in \R^{+}\cdot F\subset H_{2}(M,\partial M;\R)$ with monodromy 
$\phi : \Sigma \rightarrow \Sigma$, the singularity data of the stable foliation $\F_\phi$ of the 
pseudo-Anosov map $\phi$ is given by:
\begin{enumerate}
\item $\F_\phi$ has no singularities in the interior of $\Sigma$.
\item At each connected component of $\partial\Sigma\cap T_{s}$ (of slope $\frac{p}{q}$ given 
by Corollary~\ref{cor:slope}), there is a singularity of $\F_\phi$ of type $k\cdot |p_{s}q-q_{s}p|$-prong if $s$ is a $k-$prong singularity.
\end{enumerate}
\end{proposition}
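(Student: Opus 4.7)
The plan rests on Fried's theorem stated in \S\ref{SS:TPF}: the lamination $\mathcal{L}_\psi$ (obtained as the mapping torus of $\psi:\mathcal{F}\to \mathcal{F}$) depends, up to isotopy, only on the fibered face $F$. Consequently, for any other fibration $\Sigma\to M\to S^1$ in $\R^+\cdot F$ with monodromy $\phi$, the stable foliation $\mathcal{F}_\phi$ is isotopic to the trace $\mathcal{L}_\psi\cap \Sigma$. The proof of both (1) and (2) is then a matter of reading off local structure of $\mathcal{L}_\psi$ and intersecting with $\Sigma$.

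For (1), the hypothesis that $\mathcal{F}$ has no interior singularities means that $\mathcal{L}_\psi$ is a genuine non-singular $2$-dimensional lamination away from a tubular neighborhood of $\partial M$. Since $\Sigma$ is transverse to the suspension flow in the interior of $M$, its intersection with $\mathcal{L}_\psi$ is a non-singular $1$-dimensional lamination in the interior of $\Sigma$, which yields $\mathcal{F}_\phi$. This gives the absence of interior singularities.

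For (2), fix a puncture $s$ of $\mathbf D_n$ which is a $k$-prong singularity of $\mathcal{F}$, and let $T_s\subset \partial M$ be the associated boundary torus. The $k$ separatrices of $\mathcal F$ emanating from $s$ suspend to $k$ half-leaves of $\mathcal{L}_\psi$ all accumulating on the flow line $\gamma_s$. In the complement of a tubular neighborhood of $\gamma_s$, their closure meets $T_s$ in exactly $k$ simple closed curves, each parallel to $\gamma_s$, hence of slope $p_s/q_s$ on $T_s$ in the meridian-longitude basis. Now let $c$ be a connected component of $\Sigma\cap T_s$; by Corollary~\ref{cor:slope}, $c$ has slope $p/q$ on $T_s$. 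A standard computation on the torus shows that the geometric intersection number of a slope-$(p_s,q_s)$ curve with a slope-$(p,q)$ curve is $|p_sq-q_sp|$. Summing over the $k$ parallel curves coming from the prongs of $s$ gives $k\cdot|p_sq-q_sp|$ intersection points of $\mathcal{L}_\psi$ with $c$.

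Finally, I would explain why these intersection points merge into one singularity at the boundary component $c$ of $\Sigma$, of prong type $k\cdot |p_sq - q_sp|$. Treating $c$ as a puncture (or collapsing it when forming the closed surface from which we read prongs), each of the $k\cdot|p_sq-q_sp|$ arcs of $\mathcal F_\phi$ meeting $c$ contributes one prong converging toward the puncture, and the local model near $\gamma_s$ (a $k$-pronged singular leaf on a solid torus, cut by $\Sigma$ in a curve winding $|p_sq-q_sp|$ times around the meridian relative to $\gamma_s$) gives precisely the claimed prong type. The main point requiring care is this last local-model step: one must verify that the $k|p_sq-q_sp|$ half-leaves of $\mathcal F_\phi$ at $c$ do indeed assemble into a single singular point of the required type, rather than several singularities of smaller order, and this is where the transversality between $\Sigma$ and the flow (together with the cyclic ordering of prongs around $\gamma_s$) plays the essential role.
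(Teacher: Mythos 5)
Your argument follows the same route as the paper: both rely on Fried's result that the suspended lamination $\mathcal{L}_\psi$ is determined, up to isotopy, by the fibered face, deduce (1) from the absence of branch loci in the interior of $M$, and obtain the prong count at each boundary component $c\subset T_s$ of slope $p/q$ as $k\cdot i(c,\gamma_s)=k\,|p_sq-q_sp|$ via an intersection computation on the torus $T_s$. Your closing concern about the $k\,|p_sq-q_sp|$ half-leaves assembling into a single singular point is not a genuine gap: the boundary circle $c$ is, by convention, a single boundary singularity whose type is simply the number of separatrices (infinitesimal edges) meeting it, which is exactly how the paper's branched-surface picture $\tau_\phi=\Sigma\cap\mathcal{L}_\tau$ reads off the answer.
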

\begin{remark}
In all examples that we will be treating in this article, every singularity of $\mathcal{F}$ in a boundary of $\mathbf{D}_{n}$ that does not intersect $T_{r}$ is a $1-$prong. Except for the example treated in section~\ref{EXB4} in $B_4$,
$\F_\phi$ has no singularities in the interior of $\Sigma$ and for each $1\leq i< r$, 
writing $\partial\Sigma \cap T_i = a[m_{i}]+b[l_{i}]\in H_{1}(T_{i};\R)$, $\F_\phi$
has $\gcd(a,b)$ singularities in $T_i$, each of which is a $|p_{i}q-q_{i}p|$-prong,
where $p=\frac{a}{\gcd(a,b)}$, $q=\frac{b}{\gcd(a,b)}$ and $\frac{p_i}{q_i}$ is the slope 
of the curve $\gamma_i \subset T_i$. The type of the remaining singularity can be determined using the Euler-Poincar\'e formula. 
\end{remark}

\begin{proof}[Proof of Proposition~\ref{prop:cc:singu}]
Let $[\Sigma]\in\R\cdot F\subset H_{2}(M_{\pa},\partial M_{\pa};\R)$ be a fiber of $M$, with 
monodromy $\phi : \Sigma \rightarrow \Sigma$.
We will use the following result of Fried (see~\cite{F,Mc}): 
After an isotopy we have that
\begin{enumerate}
\item The fiber $\Sigma$ is transverse to the flow lines of $\pa$, and
\item The monodromy of the fibration determined by $[\Sigma]$ coincides with the first return map of the foliation $\mathcal{F}$.
\end{enumerate}
Hence the monodromies of any two points in $\R\cdot F\subset H_{2}(M_{\pa},\partial M_{\pa};\R)$ determine, up to isotopy, 
the same lamination $\mathcal{L}_{\psi}$. Let $\tau\hookrightarrow \mathbf{D}_{n}$ be a train track invariant by our 
given pseudo-Anosov homeomorphism $\pa$. 
Up to isotopy we assume that $\pa(\tau)$ is contained a fibered neighborhood of $\tau$ and transverse to the tie foliation.
We assume that $\tau$ carries the measured foliation 
$\F$. Let $\mathcal L_\tau$ be the mapping torus of $\tau$, namely $\mathcal L_\tau = 
\tau\times[0,1]/\left<(x,1)\sim(\pa(x),0)\right>$. \medskip

The aforementioned result of Fried implies that the intersection $\F_\phi=\Sigma\cap\mathcal{L}_\pa$ 
defines an invariant measured foliation for $\phi$ and  $\F_\phi$ is carried by the train track 
$\tau_\phi=\Sigma\cap \mathcal L_\tau$. By construction 
$\mathcal L_\pa$ is carried by the branched surface $\mathcal L_\tau$. For any singularity $s$ of 
$\F$, one obtains a simple closed curve $\gamma_s \subset M$ which is the closed orbit of the flow line passing 
throughout $s$. Notice that the union of all $\gamma_s$ is the branched loci of $\mathcal L_\tau$. Since $\F$ has no singularities in the interior of $\mathbf{D}_{n}$
all curves $\gamma_s$ lies in $T_i$ for some $i$. Hence all the singularities of $\F_\phi$ lies 
in $\partial \Sigma\cap T_i$ which proves the first point of the proposition. \medskip

Now we determine the number and type of prongs of $\tau_{\phi}$. 
For that we consider the number of prongs of $\F_\phi$ at each component of $\partial \Sigma \cap T_j$
for each $j$ (clearly for a given $j$ the type of the singularity at each component is the same).
Let $c_{\left[\frac{p}{q}\right]}=p[m_{j}]+q[l_{j}]\in H_{1}(T_{j};\R)$ be the corresponding curve 
representing a connected component of $\partial \Sigma \cap T_j$ (see Corollary~\ref{cor:slope}). By the aforementioned result of 
Fried, each intersection between $c_{\left[p/q\right]}$ and $\gamma_s$ contributes 
to $k-$infinitesimal edges (if $s$ is a $k-$prong singularity). Hence total number of prongs of $\F_\phi$ at $\partial \Sigma \cap T_j$
is equal to 
$$
k\cdot i(c_{\left[p/q\right]},\gamma_s).
$$
Since the slope of $\gamma_s$ is $p_s/q_s$ one draws:
$$
i(c_{\left[p/q\right]},\gamma_s) = |p_{s}q-q_{s}p|.
$$
This ends the proof of Proposition~\ref{prop:cc:singu}.
\end{proof}

We now address the case when $\mathcal{F}$ has singularities in the interior of $\mathbf{D}_{n}$. Roughly speaking, the idea is to remove the interior singularities of $\mathcal{F}$ to be in the context of the preceding case.

Note that in the definition of pseudo-Anosov homeomorphism we can 
remove or add punctures while keeping the ``same'' map $\pa : S \rightarrow S$. 
More precisely when $\{\pa^{i}(x)\}$ is a periodic orbit of {\em unpunctured points}, puncturing at
$\{\pa^i(x)\}$ refers to adding them to the puncture set $\{p_i\}$. Conversely, when $\{\pa^i(p)\}$ is a periodic 
orbit of $k$-prong punctured singularities for $k > 1$, capping them off refers to removing them from 
the puncture set. For pseudo-Anosov braids, puncturing or capping off corresponds to
adding or removing some strands.

\begin{proposition}[$\F$ has singularities in the interior of $\mathbf{D}_{n}$]
\label{prop:cc:singu:2}
Puncturing at $\{\pa^i(s)\}$ for any singularity $s$ of $\F$ in the interior of $\mathbf{D}_{n}$ 
gives rise to a pseudo-Anosov $\widetilde{\pa} : \mathbf{D}_{m} \rightarrow \mathbf{D}_{m}$ where $m>n$.
By construction $\F_{\widetilde{\pa}}$ has no interior singularities. Moreover the injection 
$\mathbf{D}_{n} \rightarrow \mathbf{D}_{m}$ induces a map $M_\pa \rightarrow M_{\widetilde{\pa}}=:\widetilde{M}$
and each class $[\Sigma]\in \R^{+}\cdot F\subset H_{2}(M,\partial M;\R)$ (with 
with monodromy $\phi$) determines 
a class $[\widetilde{\Sigma}]\in \R^{+}\cdot F\subset H_{2}(\widetilde{M},\partial \widetilde{M};\R)$ 
with monodromy $\widetilde{\phi}$.

The map $\phi$ is obtained by capping the singularities of $\widetilde{\phi}$ that lie in the interior of $\mathbf{D}_{n}$. In particular 
$\F_\phi$ and $\F_{\widetilde{\phi}}$ share the same number and type of singularities.
\end{proposition}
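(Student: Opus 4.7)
The plan is to reduce Proposition~\ref{prop:cc:singu:2} to the previous Proposition~\ref{prop:cc:singu} by removing all interior singularities of $\F$ via puncturing, thereby replacing $(\mathbf{D}_n,\pa)$ with a pseudo-Anosov pair $(\mathbf{D}_m,\widetilde{\pa})$ to which Proposition~\ref{prop:cc:singu} directly applies.

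First I would verify that puncturing is well-defined. Since any train track $\tau$ carrying $\F$ is a finite graph, $\F$ has only finitely many singularities, and $\pa$ permutes them; in particular the interior singularities decompose into finitely many finite $\pa$-orbits. Adjoining all these orbit points to the puncture set of $\mathbf{D}_n$ yields $\mathbf{D}_m$ with $m>n$, and $\pa$ descends to a homeomorphism $\widetilde{\pa}:\mathbf{D}_m\to\mathbf{D}_m$ that preserves $\F$. The foliation, now viewed on $\mathbf{D}_m$, has all its singularities at punctures, so $\widetilde{\pa}$ is pseudo-Anosov with $\F_{\widetilde{\pa}}=\F$ and no interior singularities, as required.

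Next I would identify $\widetilde{M}=M_{\widetilde{\pa}}$ as a drilling of $M=M_\pa$. The suspended closed orbits $\gamma_{s_1},\dots,\gamma_{s_k}\subset\mathrm{int}(M)$ corresponding to the interior singularities of $\F$ are disjoint simple closed curves, and $\widetilde{M}$ is obtained from $M$ by removing an open regular neighborhood of $\bigsqcup \gamma_{s_j}$. In particular the natural inclusion $\widetilde{M}\hookrightarrow M$ is the identity away from these neighborhoods, so it induces the homology transfer $[\Sigma]\mapsto [\widetilde{\Sigma}]$ by setting $\widetilde{\Sigma}:=\Sigma\cap\widetilde{M}$. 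Invoking Fried's transversality (as used in the proof of Proposition~\ref{prop:cc:singu}), we may assume $\Sigma$ is transverse to every flow line, in particular to each $\gamma_{s_j}$; then $\widetilde{\Sigma}$ is $\Sigma$ minus a finite collection of small open discs, and the restriction of the fibration of $M$ over $S^1$ makes $\widetilde{\Sigma}$ a fiber of $\widetilde{M}$ with monodromy $\widetilde{\phi}$ equal to $\phi$ punctured at each orbit $\Sigma\cap\gamma_{s_j}$.

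Finally I would observe that capping off those new punctures of $\widetilde{\Sigma}$ returns $\Sigma$, and that the stable foliation $\F_{\widetilde{\phi}}$ extends over each cap by placing there the singularity originally carried by $\F_\phi$ at that point. Hence $\phi$ is obtained from $\widetilde{\phi}$ precisely by capping the singularities lying in the image of $\mathbf{D}_n\hookrightarrow\mathbf{D}_m$, and $\F_\phi$ and $\F_{\widetilde{\phi}}$ share the same number and type of singularities (only the boundary vs.\ interior location changes). The main technical obstacle is the geometric identification $\widetilde{M}=M\setminus\bigsqcup\mathcal{N}(\gamma_{s_j})$ and the transversality argument showing that $\widetilde{\Sigma}=\Sigma\cap\widetilde{M}$ is a genuine fiber of $\widetilde{M}$ whose monodromy is the puncturing of $\phi$; once this is in place the remainder is bookkeeping.
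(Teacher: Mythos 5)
Your proof is correct and follows the same approach the paper intends: the paper's own argument is the single sentence ``The proof is clear from the definition of capping off and puncturing at singularities,'' and your proposal simply unpacks the details left implicit there (finiteness of the orbits, the identification of $\widetilde{M}$ as $M$ with the suspended singular orbits drilled out, Fried transversality giving $\widetilde{\Sigma}=\Sigma\cap\widetilde{M}$, and the cap-off returning $\phi$). As a minor point in your favor, you correctly record the direction of the map (the drilled manifold $\widetilde{M}$ includes into $M$), whereas the paper's phrasing $M_\pa\to M_{\widetilde{\pa}}$ is a loose abuse.
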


\begin{proof}[Proof of Proposition~\ref{prop:cc:singu:2}]
The proof is clear from the definition of capping off and puncturing at singularities.
\end{proof}

\subsection{Orientability of singular foliation}

In this section we determine whether or not the measured foliation $\F_\phi$ 
is orientable. For that we will use the following well known theorem of Thurston:
\begin{NoNumberTheorem}
For any pseudo-Anosov homeomorphism $\phi$ on a surface $\Sigma$ 
the following are equivalent:
\begin{enumerate}
\item The stretch factor of $\phi$ is an eigenvalue of the linear map $\phi_\ast$ defined
on $H_1(\Sigma,\mathbb \Z)$.
\item The invariant measured foliation $\F_\phi$ of $\phi$ is orientable.
\end{enumerate}
\end{NoNumberTheorem}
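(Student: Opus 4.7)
The plan is to prove both directions using the correspondence between an orientation of the invariant foliations and a closed $1$-form on $\Sigma$ that is an eigenvector for $\phi^*$.

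For $(2)\Rightarrow(1)$: Fix an orientation of $\F_\phi^u$, replacing $\phi$ by $\phi^2$ if $\phi$ does not preserve this orientation. Orientability forces every $k$-prong singularity of $\F_\phi^u$ to have even $k$. Combined with the transverse measure $\mu^s$ of the stable foliation, the chosen orientation turns $\mu^s$ into a signed transverse cocycle, and via the Ruelle--Sullivan construction into a closed $1$-form $\omega$ on $\Sigma$ (the evenness of the prongs ensures the local model extends consistently across the singular set). The defining dynamics of a pseudo-Anosov give $\phi^*\omega = \lambda\,\omega$ on transversals, so the class $[\omega]\in H^1(\Sigma,\R)$ is an eigenvector of $\phi^*$ with eigenvalue $\lambda$. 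Since $\omega$ has strictly positive integrals along suitable transverse cycles one has $[\omega]\neq 0$, and Poincar\'e duality then yields $\lambda$ as an eigenvalue of $\phi_*$ on $H_1(\Sigma,\R)$.

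For $(1)\Rightarrow(2)$: I proceed by contraposition. Suppose $\F_\phi^u$ is non-orientable and consider the orientation double cover $p:\widehat{\Sigma}\to\Sigma$ associated to the orientation monodromy of $\F_\phi^u$. By construction the lifted foliation $\widehat{\F^u}$ is orientable. After replacing $\phi$ by a suitable power $\phi^N$ (so that it lifts to the double cover), one obtains a pseudo-Anosov $\widehat{\phi}$ on $\widehat{\Sigma}$ with stretch factor $\lambda^N$ and orientable invariant foliations. By the previous direction, $\lambda^N$ is an eigenvalue of $\widehat{\phi}_*$ on $H_1(\widehat{\Sigma},\R)$, with eigenvector Poincar\'e dual to the class $[\widehat{\omega}]$ built from the orientation of $\widehat{\F^u}$. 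The deck involution $\iota$ reverses the orientation of $\widehat{\F^u}$ by the very construction of $\widehat{\Sigma}$, whence $\iota^*[\widehat{\omega}]=-[\widehat{\omega}]$ and the corresponding eigenvector lies in the anti-invariant part $V_-\subset H_1(\widehat{\Sigma},\R)$. Since $V_+\cong H_1(\Sigma,\R)$ via the transfer and is complementary to $V_-$, we conclude that $\lambda^N$ is not an eigenvalue of $\phi^N_*$ on $H_1(\Sigma,\R)$, and therefore $\lambda$ is not an eigenvalue of $\phi_*$.

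The main obstacle is the delicate construction of the cohomology class $[\omega]$ in direction $(2)\Rightarrow(1)$: although orientability of $\F^u$ guarantees that prong singularities have even index, one must still verify that the resulting transverse cocycle integrates to a genuine nonzero element of $H^1(\Sigma,\R)$ and that the scaling factor is exactly $\lambda$ (not $\pm\lambda$, after having arranged that $\phi$ preserves orientation). A secondary technical point is justifying, in direction $(1)\Rightarrow(2)$, that a power of $\phi$ lifts to the orientation cover and that the eigenspace decomposition $H_1(\widehat{\Sigma},\R)=V_+\oplus V_-$ is preserved by $\widehat{\phi}_*$; the latter follows from the fact that $\widehat{\phi}$ and $\iota$ commute as elements of the lift group.
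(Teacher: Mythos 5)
The paper states this theorem without proof (it is cited as a ``well known theorem of Thurston'' and used through the Alexander polynomial machinery), so there is no argument in the paper to compare against. Your strategy --- building a Ruelle--Sullivan $1$-form from an orientation of $\F^u$ together with the transverse measure of $\F^s$, and running a double-cover argument in the other direction --- is the standard classical route and the right one. However, as written, each direction has a genuine gap.

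In $(2)\Rightarrow(1)$, the substitution of $\phi$ by $\phi^2$ changes the problem. What you produce is a class $[\omega]$ with $(\phi^2)^*[\omega]=\lambda^2[\omega]$, i.e.\ that $\lambda^2$ is an eigenvalue of $(\phi_*)^2$. This only shows that some square root of $\lambda^2$, namely $\pm\lambda$, is an eigenvalue of $\phi_*$. If $\phi$ reverses the chosen orientation of $\F^u$ then $\phi^*[\omega]=-\lambda[\omega]$ and $-\lambda$, not $\lambda$, is the eigenvalue. Your proof therefore establishes ``$\pm\lambda$ is an eigenvalue iff orientable'' --- which is indeed the precise classical statement (and is how the theorem is actually applied in \S\ref{sec:examples} of the paper, where $Q(-X)=P(X)$ is used) --- but does not prove the literal claim as stated unless one adds the hypothesis that $\phi$ preserves the orientation of $\F^u$. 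You should either flag this sign ambiguity explicitly or add the orientation-preserving hypothesis, rather than sweeping it into a passing remark.

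In $(1)\Rightarrow(2)$, the crucial step is missing. You show that the particular eigenvector coming from $[\widehat{\omega}]$ lies in the anti-invariant part $V_-$; but to conclude that $\lambda^N$ is \emph{not} an eigenvalue of $(\phi^N)_*$ on $H_1(\Sigma,\R)\cong V_+$, you need to know that the \emph{entire} $\lambda^N$-eigenspace of $\widehat{\phi}_*$ is contained in $V_-$, equivalently that $\lambda^N$ is a simple eigenvalue of $\widehat{\phi}_*$. Without this, nothing rules out a second, independent eigenvector of eigenvalue $\lambda^N$ sitting in $V_+$. The needed input is a separate fact: the spectral radius of a pseudo-Anosov on $H_1$ is at most $\lambda$, and when $\lambda$ (resp.\ $-\lambda$) is attained the corresponding eigenspace is one-dimensional, generated by the Poincar\'e dual of the orientation class --- this follows from unique ergodicity of the stable foliation, or from a Perron--Frobenius argument on an orientable invariant train track. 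State and use this lemma explicitly; as it stands the contradiction does not go through.
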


To compute the homological dilatation we will make use of the \emph{Alexander polynomial}. 
As the Teichm\"uller polynomial, the Alexander polynomial of $M$:
$$
\Delta_{M}=\sum_{g\in G} b_{g}\cdot g 
$$
is an element of the group ring $\Z[G]$, where $G=H_{1}(M,\Z)/\Tor$. For a precise definition see~\cite[\S 2]{McA}. The Alexander polynomial can be evaluated in an homology class $[\Sigma]\in H_{2}(M,\partial M;\R)$ using Poincar\'e-Lefschtez duality and then proceeding as with the Teichm\"uller polynomial in the corresponding dual cohomology class. We have the following classical result (see {\em e.g.}~\cite{milnor})
\begin{theorem}
\label{thm:homological}
Let $[\alpha]\in \R^{+}\cdot F\subset H^{1}(M;\R)$ with monodromy 
$\phi : \Sigma \rightarrow \Sigma$. Then the characteristic polynomial of 
$\phi_\ast$ acting on $H_1(\Sigma,\mathbb \Z)$ is given by
the Alexander polynomial $\Delta_M$ evaluated in $[\alpha]$.
\end{theorem}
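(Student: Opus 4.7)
The plan is to exploit the fact that $[\alpha]$ is a fibered class in order to identify the Alexander module at $[\alpha]$ explicitly in terms of the monodromy $\phi$. First I would consider the infinite cyclic cover $p:\widetilde{M}_\alpha\to M$ associated to the epimorphism $\pi_1(M)\to H_1(M,\Z)\to \Z$ determined by $[\alpha]$. Because $[\alpha]$ lies in the fibered cone $\R^+\cdot F$ with monodromy $\phi:\Sigma\to\Sigma$, pulling back the fibration $\Sigma\hookrightarrow M\to S^1$ along the universal cover $\R\to S^1$ produces a canonical diffeomorphism
\[
\widetilde{M}_\alpha \;\cong\; \Sigma\times\R \Big/ \bigl\langle (x,s)\sim(\phi(x),s-1)\bigr\rangle^{\sim}
\;\simeq\; \Sigma\times\R,
\]
under which the generator $t$ of the deck transformation group acts by $(x,s)\mapsto(\phi(x),s-1)$. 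In particular, $\widetilde{M}_\alpha$ is homotopy equivalent to the fiber $\Sigma$, and $H_1(\widetilde{M}_\alpha,\Z)$ is naturally isomorphic to $H_1(\Sigma,\Z)$ as an abelian group; the extra $\Lambda:=\Z[t,t^{-1}]$-module structure coming from the deck action is exactly $t\cdot x=\phi_\ast(x)$.

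Next I would recall the module-theoretic definition of $\Delta_M$ (see for instance~\cite{McA}): the Alexander polynomial is the generator of the first Fitting ideal of the Alexander module $H_1(\widetilde{M},\Z)$ viewed over $\Z[G]$, and its evaluation at $[\alpha]$ is, by the universal property of Fitting ideals under the ring map $\Z[G]\to\Lambda$ induced by $\xi_\alpha$, the order of the $\Lambda$-module $H_1(\widetilde{M}_\alpha,\Z)$. Combining this with the identification above, the problem reduces to computing the order of the $\Lambda$-module whose underlying group is $H_1(\Sigma,\Z)$ and whose $t$-action is $\phi_\ast$.

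For this last step, I would write a finite presentation $\Lambda^n\xrightarrow{tI-\phi_\ast}\Lambda^n \twoheadrightarrow H_1(\Sigma,\Z)_\phi$ coming from the obvious free resolution; the order of the cokernel is then the determinant $\det(tI-\phi_\ast)$, which is by definition the characteristic polynomial of $\phi_\ast$ acting on $H_1(\Sigma,\Z)$. This establishes the theorem up to a unit in $\Lambda$, which is the standard ambiguity in the Alexander polynomial.

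The main technical obstacle is the bookkeeping at the boundary: because $M$ has toroidal boundary and the Alexander module is only well-defined up to torsion by $(t-1)$-factors arising from boundary circles, one must either choose conventions on $\Delta_M$ carefully or verify that the boundary contribution is absorbed into the unit ambiguity. A clean way to handle this is to work with the presentation coming from a CW-structure compatible with the fibration, so that the boundary tori contribute factors of $(t^{k}-1)$ that can be tracked explicitly and compared with $\det(tI-\phi_\ast)$ on the relative homology $H_1(\Sigma,\partial\Sigma;\Z)$ versus $H_1(\Sigma,\Z)$. Beyond this normalization issue the argument is essentially formal.
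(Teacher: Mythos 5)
Your overall line of attack matches the classical argument the paper invokes by citing Milnor: identify the infinite cyclic cover $\widetilde{M}_\alpha$ with $\Sigma\times\R$ so that $t$ acts by $\phi_*$, note that $H_1(\widetilde{M}_\alpha)\cong H_1(\Sigma)$ as a $\Lambda$-module, and compute the order of that module from the obvious presentation $\Lambda^n\xrightarrow{tI-\phi_*}\Lambda^n$. The first and last of these steps are correct and correctly argued.

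The genuine gap is in the middle step, where you write that ``by the universal property of Fitting ideals under the ring map $\Z[G]\to\Lambda$, the evaluation of $\Delta_M$ at $[\alpha]$ is the order of $H_1(\widetilde{M}_\alpha,\Z)$.'' The base-change formula $\mathrm{Fit}_i(N\otimes_R S)=\mathrm{Fit}_i(N)\cdot S$ applies to \emph{tensoring} a finitely presented module, and what tensors well is the chain complex, not the homology. Concretely, $C_*(\widetilde{M}_\alpha)\cong C_*(\widetilde{M})\otimes_{\Z[G]}\Lambda$ as chain complexes of free modules, but
\[
H_1(\widetilde{M}_\alpha)\;\neq\;H_1(\widetilde{M})\otimes_{\Z[G]}\Lambda
\]
in general: the universal-coefficients spectral sequence produces correction terms involving $\mathrm{Tor}^{\Z[G]}_*(H_0(\widetilde{M}),\Lambda)$ and $\mathrm{Tor}^{\Z[G]}_*(H_1(\widetilde{M}),\Lambda)$, which is precisely why McMullen's Alexander norm is only an inequality against the Thurston norm in general. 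The ``main technical obstacle'' you flag (boundary tori producing $(t-1)$ factors) is a related symptom but not the source of the problem, and your suggested fix (comparing $H_1(\Sigma)$ with $H_1(\Sigma,\partial\Sigma)$) does not address the Tor terms. To close the gap one should either (i) run the Fitting-ideal computation on the free chain complex $C_*(\widetilde{M})\otimes_{\Z[G]}\Lambda$ directly, tracking the $H_0$ contribution, which for a fibered class contributes exactly a $(t-1)$ that cancels as in Milnor; or (ii) invoke the Turaev/McMullen comparison between the multi-variable Alexander polynomial and the one-variable polynomial of a specialization, which for a fibered class with $b_1(M)\ge 2$ gives exactly $\Delta_M(\alpha)\doteq\det(tI-\phi_*)$. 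Since the paper gives no proof and simply cites Milnor's \emph{Infinite cyclic coverings}, your sketch is in the right spirit, but the specialization step needs to be made precise along one of these two lines.
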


\section{First examples in $B_{3}$}
\label{sec:examples}

The goal of this section is to revisit classical examples, first studied by 
Hironaka ~\cite{Hi}, Kin-Takasawa~\cite{KT1} and McMullen~\cite{Mc}. 
We stress that the novelty in this section are the methods presented to perform calculations, 
and not the results of these. In the next sections we will address examples in $B_n$ for $n\geq 4$. 

\begin{convention}
The natural order on $\mathbb{R}$ induces an order on the punctures of $\mathbf D_n$ and thus a labeling.
We label the infinitesimal edges enclosing punctures in $\mathbf{D}_{3}$, from left to right, by $A,B,C$ so that
$\mathbb{A}_{\mathrm{prong}}=\{A,B,C\}$. Hence the standard generators $\sigma_{1},\sigma_{2}$ (induced by left Dehn half-twists around loops enclosing the punctures)
define the permutations $(A,B,C)\to (B,A,C)$ and $(A,B,C)\to (A,C,B)$ respectively.
\end{convention}

\subsection{The simplest's pseudo-Anosov braid}
\label{ex:s1invs2}
We consider first the homeomorphism $\psi = f_{\sigma_{1}^{-1}\sigma_{2}}$ and treat this example in detail.

\subsubsection{Invariant train track}

It is well known that the isotopy class of $\psi$ is pseudo-Anosov. Indeed the homeomorphism
$\psi$ leaves invariant the train track $\tau_{0}$ presented in Figure~\ref{fg1}.
The map $f_{\sigma_{1}^{-1}\sigma_{2}}$ is then represented by the train track map $T : \tau_{0} \to \tau_{0}$,
defined by $a\to aab$ and $b\to ab$. The incidence matrix, $\left(\begin{smallmatrix}
2 & 1 \\ 1 & 1 \end{smallmatrix}\right)$, is irreducible.
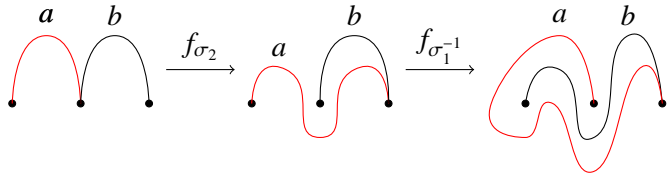
\begin{figure}[htb]

\begin{tikzpicture}[scale=0.9]


\foreach \x in {4.5,3.5,2.5}
	\draw [fill=black] (-\x,0) circle (0.05);
\draw [red, smooth] plot [tension=1.75] coordinates{(-4.5,0) (-4,1) (-3.5,0)};	
\draw [ smooth] plot [tension=1.75] coordinates{(-3.5,0) (-3,1) (-2.5,0)};

\foreach \x in {4.5,3.5,2.5}
	\draw [fill=black, xshift=3.5cm] (-\x,0) circle (0.05);

\draw[red,smooth] plot [tension=1.75]
coordinates {(-1,0) (-.5,.5) (0,-.5) (.5,.5) (1,0) };
\draw[smooth] plot [tension=2]
coordinates {(0,0) (.5,1) (1,0) };	
	
\foreach \x in {4.5,3.5,2.5}
	\draw [fill=black, xshift=7.5cm] (-\x,0) circle (0.05);
	\draw[smooth] plot [tension=1.5] coordinates {(3,0) (3.5,.5) (4,-.5) (4.5,1) (5,0)};
\draw[red, smooth] plot[tension=1] coordinates {(4,0) (3.5,1) (2.5,0)
	(3,-.5) (3.4,0)  (4,-1)  (4.65,.5) (5,0)};

\draw (-4,1.3) node{$a$};
\draw (3.5,1.3) node{$a$};
\draw (-.6,.8) node{$a$};
\draw (-4,1.3) node{$a$};
\foreach \y in {-3,.5,4.5}
\draw (\y,1.3) node{$b$};
\draw [->](-2.25,.5)--(-1.25,.5);
\draw [xshift=3.5cm,->](-2.25,.5)--(-1.25,.5);
\draw (-1.75,.85) node {$f_{\sigma_{2}}$};
\draw [xshift=3.5cm]  (-1.75,.85) node {$f_{\sigma_{1}^{-1}}$};

\end{tikzpicture}

\caption{An invariant train track $\tau_{0}$ for $f_{\sigma_{1}^{-1}\sigma_{2}}$.}
\label{fg1}
\end{figure}

We now quickly review how one can find a sequence of foldings discussed in the previous sections. For this purpose, 
consider the folding automaton and the two folding maps $F_{ba}$, $F_{ab}$ corresponding to the two standardizing homeomorphisms 
$f_{\sigma_{1}^{-1}}$ and $f_{\sigma_{2}}$ depicted in Figure~\ref{isotopy}.

\begin{figure}[htb]
\begin{tikzpicture}[scale=0.9]




\foreach \x in {4.5,3.5,2.5}
	\draw[xshift=3.5cm, yshift=3cm,smooth, red] (-\x,0) to[out=270,in=90] (-\x-.1,-.1) to[out=270,in=180] (-\x,-.25) to[out=0,in=270] (-\x+.1,-.1) to[out=90,in=270] (-\x,0);

\draw[yshift=3cm] (-1,0) to[out=90,in=180] (-.5,1) to[out=0,in=90] (.25,0) to[out=270,in=0] (0,-.35) to[out=180,in=270] (-.25,0) to[out=90,in=180] (-.15,.2) to[out=0,in=90] (0,0);

\draw[yshift=3cm] (0,0) to[out=90,in=0] (-.25,.3)
to[out=180,in=90] (-.5,0) to[out=270,in=180] (0,-.5) to[out=0,in=270] (.5,0) to[out=90,in=180] (.75,1) to[out=0,in=90] (1,0);



\foreach \x in {4.5,3.5,2.5}
	\draw[xshift=3.5cm, yshift=-3cm,smooth, red] (-\x,0) to[out=270,in=90] (-\x-.1,-.1) to[out=270,in=180] (-\x,-.25) to[out=0,in=270] (-\x+.1,-.1) to[out=90,in=270] (-\x,0);

\draw[xscale=-1,yshift=-3cm] (-1,0) to[out=90,in=180] (-.5,1) to[out=0,in=90] (.25,0) to[out=270,in=0] (0,-.35) to[out=180,in=270] (-.25,0) to[out=90,in=180] (-.15,.2) to[out=0,in=90] (0,0);

\draw[xscale=-1,yshift=-3cm] (0,0) to[out=90,in=0] (-.25,.3)
to[out=180,in=90] (-.5,0) to[out=270,in=180] (0,-.5) to[out=0,in=270] (.5,0) to[out=90,in=180] (.75,1) to[out=0,in=90] (1,0);


\foreach \x in {4.5,3.5,2.5}
	\draw[smooth, red] (-\x,0) to[out=270,in=90] (-\x-.1,-.1) to[out=270,in=180] (-\x,-.25) to[out=0,in=270] (-\x+.1,-.1) to[out=90,in=270] (-\x,0);
\draw [smooth] plot [tension=1.75] coordinates{(-4.5,0) (-4,1) (-3.5,0)};	

\draw  (-2.5,0) to[out=90,in=0] 
(-3.5,1.5)  to[out=180,in=90]
(-4.7,0)  to[out=270,in=180]
(-4.5,-.4)  to[out=0,in=270]
(-4.3,0)  to[out=90,in=0]
(-4.4,.2)    to[out=180,in=90]
(-4.5,0);

\filldraw[fill=white!90!black,smooth]
(-1.2,-.4) rectangle (1.2,1.5);

\foreach \x in {4.5,3.5,2.5}
	\draw[thick, xshift=-3.5cm,smooth, red] (\x,0) to[out=270,in=90] (\x-.1,-.1) to[out=270,in=180] (\x,-.25) to[out=0,in=270] (\x+.1,-.1) to[out=90,in=270] (\x,0);

\draw [xshift=8cm,smooth] plot [tension=1.75] coordinates{(-4.5,0) (-4,1) (-3.5,0)};	
\draw [thick,xshift=3.5cm,smooth] plot [tension=1.75] coordinates{(-4.5,0) (-4,1) (-3.5,0)};	
\draw [thick, xshift=3.5cm,smooth] plot [tension=1.75] coordinates{(-3.5,0) (-3,1) (-2.5,0)};	


\foreach \x in {4.5,3.5,2.5}
	\draw[smooth, red] (\x,0) to[out=270,in=90] (\x-.1,-.1) to[out=270,in=180] (\x,-.25) to[out=0,in=270] (\x+.1,-.1) to[out=90,in=270] (\x,0);
\draw [xshift=8cm,smooth] plot [tension=1.75] coordinates{(-4.5,0) (-4,1) (-3.5,0)};	

\draw [xscale=-1,yscale=1] (-2.5,0) to[out=90,in=0] 
(-3.5,1.5)  to[out=180,in=90]
(-4.7,0)  to[out=270,in=180]
(-4.5,-.4)  to[out=0,in=270]
(-4.3,0)  to[out=90,in=0]
(-4.4,.2)    to[out=180,in=90]
(-4.5,0);



\draw (-4,.8) node{$a$};
\draw (3.5,1.3) node{$a$};
\draw (-.5,1.3) node{$a$};

\draw (-3,1) node{$b$};
\draw (.5,1.3) node{$b$};
\draw (4,.7) node{$b$};
\draw [ ->] (-3,-1) to[out=300,in=180] (-1.5,-2.5);
\draw [xscale=-1,yscale=-1, ->] (-2.5,-1) to[out=270,in=180] (-1.5,-2.5);
\draw (-2,-1.5) node{$f_{\sigma_{1}^{-1}}$};
\draw [xscale=1,yscale=-1,yshift=-.3cm,<-] (-2.2,-.2) to[out=340,in=200] (-1.4,-.2);
\draw (-1.65,.9) node{$F_{ba}$};
\draw (2.5,2.5) node{$f_{\sigma_{2}}$};
\draw [xshift=3.5cm,xscale=1,yscale=-1,yshift=-.3cm,->] (-2.2,-.2) to[out=340,in=200] (-1.2,-.2);
\draw [xshift=3.5cm] (-1.65,.9) node{$F_{ab}$};

\draw[smooth,->] (1.1,-1.9) to[out=70,in=290] (1.1,-.6);

\draw[yshift=.5cm,xscale=-1,yscale=-1,smooth,->] (1.4,-2.3) to[out=70,in=290] (1.4,-.6);

\draw (1.7,-1) node{$f_{\mathrm rot}$};
\draw (-2,2) node{$f^{-1}_{\mathrm rot}$};
\draw (.5,-1.7) node{$b$};
\draw (-0.8,-1.7) node{$a$};
\draw[yshift=6cm] (.5,-1.7) node{$b$};
\draw[yshift=6cm]  (-0.5,-1.7) node{$a$};

\end{tikzpicture}

\caption{The folding automaton for $B_{3}$. The map $f_{\mathrm rot}$ is 
an isotopy (rotation in the neighborhood near punctures). Observe that the folding 
$F_{ba}$ induces a train track map $T_{ab}$ that represents $f_{\sigma_1^{-1}}$.
The same is true for $F_{ba}$ with $f_{\sigma_2}$.}
\label{isotopy}
\end{figure}
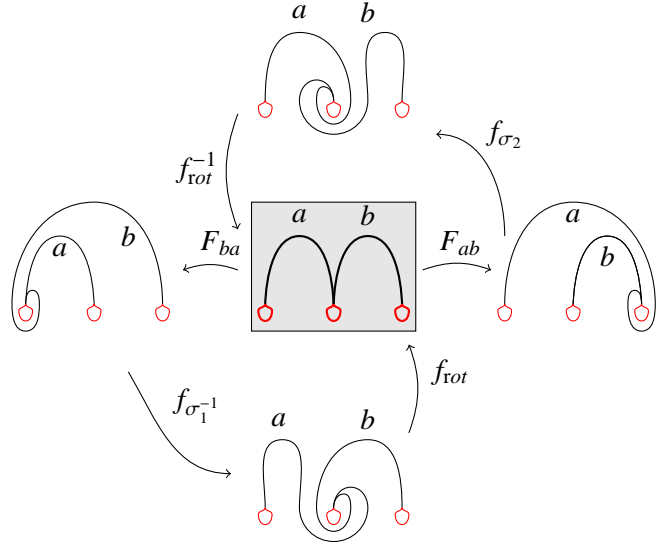

\begin{remark}
In the sequel, we will encode the folding automaton by representing the isotopy near the punctures by a permutation
(see Definition~\ref{def:standardizing} and Example~\ref{ex:standardizing}).
This defines a simpler automaton: see Figure~\ref{fg0}.
\end{remark}
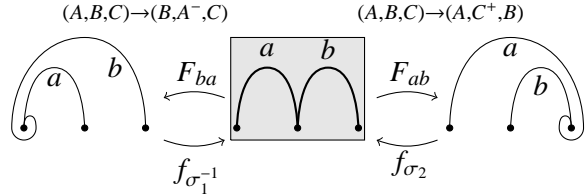
\begin{figure}[htb]

\begin{tikzpicture}[scale=0.8]


\foreach \x in {4.5,3.5,2.5}
	\draw [fill=black] (-\x,0) circle (0.05);
\draw [smooth] plot [tension=1.75] coordinates{(-4.5,0) (-4,1) (-3.5,0)};	

\draw  (-2.5,0) to[out=90,in=0] 
(-3.5,1.5)  to[out=180,in=90]
(-4.7,0)  to[out=270,in=180]
(-4.5,-.2)  to[out=0,in=270]
(-4.3,0)  to[out=90,in=0]
(-4.4,.2)    to[out=180,in=90]
(-4.5,0);


\filldraw[fill=white!90!black,smooth]
(-1.1,-.2) -- (1.1,-.2) -- (1.1,1.5) -- (-1.1,1.5)--(-1.1,-.2);

\foreach \x in {4.5,3.5,2.5}
	\draw [xshift=3.5cm, fill=black] (-\x,0) circle (0.05);
\draw [thick, xshift=3.5cm,smooth] plot [tension=1.75] coordinates{(-4.5,0) (-4,1) (-3.5,0)};	
\draw [thick, xshift=3.5cm,smooth] plot [tension=1.75] coordinates{(-3.5,0) (-3,1) (-2.5,0)};	


\foreach \x in {4.5,3.5,2.5}
	\draw [xshift=7cm, fill=black] (-\x,0) circle (0.05);

\draw [xshift=8cm,smooth] plot [tension=1.75] coordinates{(-4.5,0) (-4,1) (-3.5,0)};	

\draw [xscale=-1,yscale=1] (-2.5,0) to[out=90,in=0] 
(-3.5,1.5)  to[out=180,in=90]
(-4.7,0)  to[out=270,in=180]
(-4.5,-.2)  to[out=0,in=270]
(-4.3,0)  to[out=90,in=0]
(-4.4,.2)    to[out=180,in=90]
(-4.5,0);



\draw (-4,.8) node{$a$};
\draw (3.5,1.3) node{$a$};
\draw (-.5,1.3) node{$a$};

\draw (-3,1) node{$b$};
\draw (.5,1.3) node{$b$};
\draw (4,.7) node{$b$};
\draw [->] (-2.2,-.2) to[out=340,in=200] (-1.2,-.2);
\draw (-1.65,-.8) node{$f_{\sigma_{1}^{-1}}$};
\draw [xscale=1,yscale=-1,yshift=-.3cm,<-] (-2.2,-.2) to[out=340,in=200] (-1.2,-.2);
\draw (-1.65,.9) node{$F_{ba}$};
\draw (-2.5,1.9) node{$\scriptstyle (A,B,C)\to(B,A^{-},C)$};

\draw [xshift=3.5cm,<-] (-2.2,-.2) to[out=340,in=200] (-1.2,-.2);
\draw [xshift=3.5cm](-1.65,-.6) node{$f_{\sigma_{2}}$};
\draw [xshift=3.5cm,xscale=1,yscale=-1,yshift=-.3cm,->] (-2.2,-.2) to[out=340,in=200] (-1.2,-.2);
\draw [xshift=3.5cm] (-1.65,.9) node{$F_{ab}$};
\draw [xshift=4cm] (-1.65,1.9) node{$\scriptstyle (A,B,C)\to(A,C^+,B)$};

\end{tikzpicture}


\caption{The folding automaton for $B_{3}$. Note that the graphs $\mathcal{N}^{\mathrm{lab}}(\tau,h,\varepsilon)$ and 
$\mathcal{N}(\tau,h)$ coincide: it has only one vertex and two edges represented by the two folding maps
$F_{ab}$ and $F_{ba}$. For each edge we have represented the action of the 
standardizing braids on the punctures.
}
\label{fg0}
\end{figure}

The two foldings $F_{ba}$ and $F_{ab}$ define two train track maps $T_{ba}$ and $T_{ab}$ 
(representing the two homeomorphisms $f_{\sigma_{1}^{-1}}$ and $f_{\sigma_{2}}$, respectively):
$$
\begin{array}{ccc}
\tau_{0}\stackrel{T_{ba}}{\longrightarrow}\tau_{0} \hspace{2cm}& \hspace{2cm} \tau_{0}\stackrel{T_{ab}}{\longrightarrow}\tau_{0} \\
a\to a \hspace{2cm} & \hspace{2cm} a\to ab\\
b\to ba \hspace{2cm} & \hspace{2cm} b\to a
\end{array}
$$
whose incidence matrices are $M_{ba} := M(T_{ba})=\left(\begin{smallmatrix}
1 & 0 \\ 1 & 1 \end{smallmatrix}\right)$ and $M_{ab}:= M(T_{ab})= \left(\begin{smallmatrix}
1 & 1 \\ 0 & 1 \end{smallmatrix}\right)$.
To be more precise, one sees that in this very particular example $\tau_0$ is both 
invariant by $f_{\sigma_{2}}$ and $f_{\sigma_{1}^{-1}}$ and the associated train track maps 
are given by $T_{ab}$ and $T_{ba}$. Hence the path in the automaton representing 
$f_{\sigma_{1}^{-1}\sigma_{2}} = f_{\sigma_{1}^{-1}} \circ f_{\sigma_{2}}$ has the train track map given by 
$T_{ba}\circ T_{ab}$. Therefore the incidence matrix is 
$$
M(T_{ba}\circ T_{ab}) = M_{ab}\cdot M_{ba}= \begin{pmatrix}
2 & 1 \\
1 & 1
\end{pmatrix}.
$$
Observe that in this case the relabeling map involved is equal to the identity map.
In the above situation, matrices belong to $\mathrm{GL}(\Z^{\mathbb{A}_{\mathrm{real}}})=\mathrm{GL}(\Z^{\{a,b\}})$.

\subsubsection{The Teichm\"uller polynomial}
	\label{subsec:simplteichpoly}

We now compute the Teichm\"uller polynomial of the fibered face containing the fibration defined by the suspension of 
$f_{\sigma_{1}^{-1}\sigma_{2}}$. Recall that $\mathbf{D}_{3}$ is the complement of $3$ round discs $D_{A}$, $D_{B}$ and $D_{C}$ 
lying along a diameter of the closed unit disc. The rank of the group $H$ is given by the number of cycles of the permutation 
induced by the action of $f_{\sigma_{1}^{-1}\sigma_{2}}$ on the boundary $\{[\partial D_{\alpha}]\}_{\alpha \in \mathbb{A}_{\mathrm{prong}}}$. Since the braid $\beta=\sigma_{1}^{-1}\sigma_{2}$ permutes $3$ strands cyclically, $H$ is isomorphic to $\Z$. Therefore $\pi:\widetilde{\mathbf{D}_{3}}\to\mathbf{D}_{3}$ is a $\Z$-covering. The infinite surface $\widetilde{\mathbf{D}_{3}}$ can be constructed by glueing $\Z$ 
copies of the simply connected domain obtained by cutting $\mathbf{D}_{3}$ along three disjoint segments going from $D_{\alpha}$ to the exterior boundary of $\mathbf{D}_{3}$. 
These are the so called leaves of the covering  $\pi:\widetilde{\mathbf{D}_{3}}\to\mathbf{D}_{3}$ (see \S \ref{SLEO}). For our computations, we fix a labeling by $t\in \Z$ of the set of leaves forming $\widetilde{\mathbf{D}_{3}}$ that is coherent with the action of ${\rm Deck}(\pi)$. This labeling induces a labeling for the edges and vertices of the infinite train track $(\widetilde{\tau_{0}},\widetilde{h})$.
 
As noted before, the path in the automaton $\mathcal{N}(h,\tau_{0})$  representing $f_{\sigma_{1}^{-1}\sigma_{2}}$ is 
$T_{ba} \circ T_{ab}$. In Figure~\ref{fg3} we depict the lift of each factor in this path to $\widetilde{\mathbf{D}_{3}}$. 

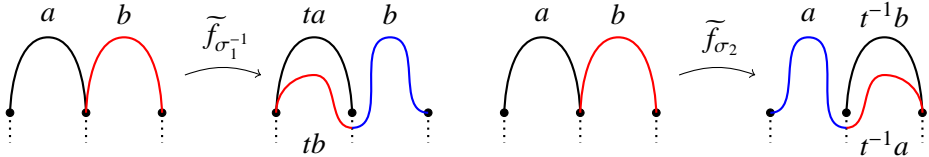
\begin{figure}[here]


\begin{tikzpicture}

\foreach \x in {4.5,3.5,2.5}
	\draw [xshift=-1.5cm, fill=black] (-\x,0) circle (0.05);
	\foreach \x in {4.5,3.5,2.5}
\draw [thick, xshift=-1.5cm,dotted] (-\x,0)--(-\x,-.4);\draw [thick,xshift=-1.5cm,smooth] plot [tension=1.75] coordinates{(-4.5,0) (-4,1) (-3.5,0)};	
\draw [red, thick, xshift=-1.5cm,smooth] plot [tension=1.75] coordinates{(-3.5,0) (-3,1) (-2.5,0)};	


	\foreach \x in {4.5,3.5,2.5}
\draw [thick, xshift=2cm,dotted] (-\x,0)--(-\x,-.4);

\foreach \x in {4.5,3.5,2.5}
	\draw [xshift=2cm, fill=black] (-\x,0) circle (0.05);

\draw [thick, xshift=1cm,smooth] plot [tension=1.75] coordinates{(-3.5,0) (-3,1) (-2.5,0)};	
\draw[thick,xshift=-1.5cm,red,smooth] (-1,0) to[out=90,in=180] (-.5,.5) to[out=0,in=180] (0,-.2);
\draw[thick,xshift=-1.5cm,blue,smooth]
(0,-.2) to[out=0,in=180]
(.5,1) to[out=0,in=180] (1,0);


\foreach \x in {4.5,3.5,2.5}
	\draw [xshift=5cm, fill=black] (-\x,0) circle (0.05);
	\foreach \x in {4.5,3.5,2.5}
\draw [thick, xshift=5cm,dotted] (-\x,0)--(-\x,-.4);\draw [thick,xshift=5cm,smooth] plot [tension=1.75] coordinates{(-4.5,0) (-4,1) (-3.5,0)};	
\draw [red, thick, xshift=5cm,smooth] plot [tension=1.75] coordinates{(-3.5,0) (-3,1) (-2.5,0)};	


	\foreach \x in {4.5,3.5,2.5}
\draw [xshift=6.5cm, thick, xshift=2cm,dotted] (-\x,0)--(-\x,-.4);

\foreach \x in {4.5,3.5,2.5}
	\draw [xshift=8.5cm, fill=black] (-\x,0) circle (0.05);

\draw [thick, xshift=8.5cm,smooth] plot [tension=1.75] coordinates{(-3.5,0) (-3,1) (-2.5,0)};	
\draw[xshift=3.5cm,xscale=-1,thick,xshift=-1.5cm,red,smooth] (-1,0) to[out=90,in=180] (-.5,.5) to[out=0,in=180] (0,-.2);
\draw[xshift=3.5cm,xscale=-1,thick,xshift=-1.5cm,blue,smooth]
(0,-.2) to[out=0,in=180]
(.5,1) to[out=0,in=180] (1,0);



\draw (-5.5,1.3) node{$a$};
\draw[xshift=6.5cm] (-5.5,1.3) node{$a$};
\draw[xshift=10cm] (-5.5,1.3) node{$a$};
\draw (-2,1.3) node{$ta$};
\draw (5.5,-.4) node{$t^{-1}a$};


\draw (-4.5,1.3) node{$b$};
\draw (-1,1.3) node{$b$};
\draw (-2,-.4) node{$tb$};
\draw (2,1.3) node{$b$};
\draw (5.5,1.3) node{$t^{-1}b$};


\draw [xshift=-1.5cm,xscale=1,yscale=-1,yshift=-.3cm,->] (-2.2,-.2) to[out=340,in=200] (-1.2,-.2);
\draw [xshift=5cm,xscale=1,yscale=-1,yshift=-.3cm,->] (-2.2,-.2) to[out=340,in=200] (-1.2,-.2);

\draw (-3.15,1) node{$\widetilde{f}_{\sigma_{1}^{-1}}$};

\draw [xshift=6.5cm] (-3.15,1) node{$\widetilde{f}_{\sigma_{2}}$};

\end{tikzpicture}


\caption{
The lift of homeomorphisms induced by folding operations.}
\label{fg3}
\end{figure}
The first train track map $T_{ab}$ corresponds to the homeomorphism $f_{\sigma_{2}}$. We choose the lift 
$\widetilde{f_{\sigma_{2}}}$ of $f_{\sigma_{2}}$ that fixes the vertex $v_{\rm fix}$, which in figure \ref{fg3} is the vertex to the left. Equipped with this choice we get
a train track map $\widetilde{T_{ab}} : \widetilde{\tau_0} \rightarrow \widetilde{\tau_0}$ induced by $\widetilde{f_{\sigma_{2}}}(\widetilde{\tau_0}) \prec \widetilde{\tau_0}$. Similarly the train track map 
$T_{ba}$ represents the homeomorphism $f_{\sigma^{-1}_{1}}$ and we choose the lift 
$\widetilde{f_{\sigma^{-1}_{1}}}$ of $f_{\sigma^{-1}_{1}}$ that fixes $v_{fix}$, which in figure  \ref{fg3} is the vertex to the right. This lift is represented by train track map $\widetilde{T_{ba}} : \widetilde{\tau_0} \rightarrow \widetilde{\tau_0}$. As we did in Example~\eqref{ex:first}, 
a direct calculation shows that
$$
\begin{array}{l}
w_{1}=\eta_{1}(1,C^{+})=(1,C^{+}), \textrm{ since } \eta_1(A,B,C) =(A,B,C),\\
w_{2}=\eta_{2}(A^{-1},1)=(A^{-1},1) \textrm{ since }\eta_{2}(A,B,C)=\pi_1\circ \eta_{1}(A,B,C)= (A,C,B),\\
\end{array}
$$
In the particular case of this example, all punctures are permuted cyclically, hence $t(w_{1})=(1,t)$ and $t(w_{2})=(t^{-1},1)$. Theorem~\ref{thm:second:main} then gives that the incidence matrix of the train track map $\widetilde{T}$ representing 
$\widetilde{f}_{\sigma_{1}^{-1}\sigma_{2}}$ is
$$
	\label{prods}
M(\widetilde{T})=M(\widetilde{T_{ba}} \circ \widetilde{T_{ab}})	= \begin{pmatrix}
1 & 1 \\
0 & 1
\end{pmatrix}
\begin{pmatrix}
1 & 0 \\
0 & t
\end{pmatrix}
\begin{pmatrix}
1 & 0 \\
1 & 1
\end{pmatrix}
\begin{pmatrix}
t^{-1} & 0 \\
0 & 1
\end{pmatrix}
=\begin{pmatrix}
1+t^{-1} & t \\
1 & t
\end{pmatrix}.
$$
Therefore the characteristic polynomial is
$$
\Theta_{F}(t,u)=u^{2}-(1+t+t^{-1})u+1
$$

\begin{remark}
From the preceding calculations it is easy to compute the Teichm\"uller polynomials associated to braids in $B_{3}$ that permute the strands cyclically (by considering products of $M(\widetilde{T_{ba}})$  and $M(\widetilde{T_{ab}}$).
Compare with~\cite[\S 11]{Mc}.
\end{remark}

\subsubsection{Evaluating the Teichm\"uller polynomial of $\sigma_{1}^{-1}\sigma_{2}$}
First given a class $f_{\beta}\in {\rm Mod}(\mathbf{D}_{n})$ we explain how to assign coordinates on 
$H^{1}(M;\Z)$ such that the cohomology class corresponding to the fibration defined $f_{\beta}$ is $(0,\ldots, 0,1)$. 
Following Section~\ref{SS:CNBC}, we choose an ordered basis $B=\{[m_{1}],\ldots, [m_{r}]\}$ of $H_{1}(M;\Z)$ 
formed by the meridians of the tori $T_{1},\ldots, T_{r}$ respectively. Given that 
$H_{1}(M;\Z)$ is torsion free, the base $B$ defines a base $B^{*}=\{[s_{1}],\ldots [s_{r-1}],[y]\}$ for $H^{1}(M;\Z)\simeq {\rm Hom}(H_{1}(M,\Z),\Z)$ by duality. Here $s_{i}=m_{i}^{*}$ for $i=1,\ldots,r-1$ and $m_{r}=y$. Let $[S_{r}]=i(\mathbf{D_{n}})$ denote the class of the fiber of the fibration defined by $f_{\beta}$. The intersection of $[S_{r}]$ with $[m_{i}]$ is given by $\delta_{rj}$ and hence, using the Universal Coefficient Theorem and Poincar\'e duality, we deduce that the coordinates of $[S_{r}]^{*}\in H^{1}(M;\Z)\simeq H_{2}(M,\partial M;\Z)$ for the basis $B^{*}$ are precisely $(0,\ldots,0,1)$. In the rest of the examples presented in this text we always choose the ordered basis $B^{*}$. Remark that $\{[m_{1}],\ldots,[m_{r-1}]\}$ generate the $f_{\beta}$-invariant homology of the $r-1$ punctured disc and $[m_{r}]$ corresponds to the natural lifting  $\widetilde{f_{\beta}}$ of $f_{\beta}$, hence we can identify $\{[m_{1}],\ldots,[m_{r-1}],[m_{r}]\}$ with the variables $\{t_{1},\ldots,t_{r-1},u\}$ of the Teichm\"uller polynomial  (see Section~\ref{SS:TPF}).
In the following figure we depict the link complement defined by  $\sigma_{1}^{-1}\sigma_{2}$. 

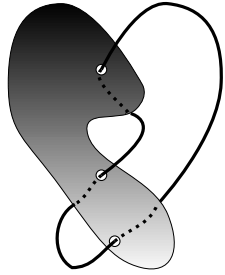
\begin{figure}[here]
  \begin{center}


\begin{tikzpicture}[scale=0.7]


\shadedraw[very thin, top color=black, bottom color=white!100!black, smooth]
plot [tension=1] coordinates{(.5,.5) (-1,2)  (-2,.5)  (-1,-1.5) (.5,-3) (1,-2) (-.5,-.5) (.4,0) (.5,.5)};

\draw [fill=white]  (-.25,.75) circle (.1);

\draw[very thick,smooth] plot [tension=1]
coordinates {(-.25,.75)  (1,2)  (2,.5)  (.85,-1.75)};

\draw [fill=white]  (-.25,-1.25) circle (.1);

\draw[very thick,smooth] plot [tension=1]
coordinates {(0.3,-0.08) (.5,-.5) (-.25,-1.25)};

\draw[dotted, very thick, smooth] (-.25,-1.25) to[out=220,in=30] (-.75,-1.8);

\draw [fill=white]   (0,-2.5) circle (.1);

\draw[smooth,very thick]   (-.75,-1.8) to[out=210,in=150] (-.8,-3) to[out=0,in=220]  (0,-2.5);

\draw[dotted,very thick, smooth] (0,-2.5) to[out=40,in=250] (.8,-1.8);

\draw[dotted,very thick] (-.25,.75) to[out=220, in=340] (0.2,-0.08);

\end{tikzpicture}


 \caption{The link $6_{2}^2$ and the fiber of the fibration defined by $\sigma^{-1}_{1}\sigma_{2}$.} \label{F:622}
 \end{center}
 \end{figure}
\par
We now determine the Thurston norm for the case $\beta=\sigma_{1}^{-1}\sigma_{2}$. We achieve 
this by computing first the Alexander norm of $M=M_{f_{\beta}}$. Direct computation 
shows that the Alexander polynomial of $\beta$ is
\begin{equation}
	\label{E:AlexPol}
\Delta_{M}(t,u)=u+u^{-1}-(-t^{-1}+1-t)
\end{equation}
(well defined up to multiplication by a unit in $\Z[G]$ (see \cite{McA}). 
The Newton polygon $N(\Delta_{M})$ of this polynomial is the symmetric diamond forming the convex hull of the points 
$\{(0,\pm1),(\pm1,0),(0,0)\}$; its Newton polytope is the square of vertices $\{(\pm\frac{1}{2},\pm\frac{1}{2})\}$.
By Remark~\ref{R:pdual} the unit ball of the Alexander norm is the square of vertices $\{(\pm\frac{1}{2},\pm\frac{1}{2})\}$
(in $H^1(M;\Z)$). Hence
$$
||(s,y)||_{A}=\max(|2s|,|2y|) \qquad \textrm{for all } (s,y)\in H^{1}(M;\Z).
$$
By theorems~\ref{T:TvsA}-\ref{T:FA} we conclude that the segment joining the points $(-\frac{1}{2},\frac{1}{2})$ and $(\frac{1}{2},\frac{1}{2})$ is the fibered face $F$ of the Thurston norm ball whose cone $\R^{+}\cdot F$ contains the fibration defined by $\sigma_{1}^{-1}\sigma_{2}$. Hence
$$
||(s,y)||_{T}=\max(|2s|,|2y|)
$$
for all fibrations $(s,y)\in \R^{+}\cdot F\cap H^{1}(M;\Z)$. \medskip

We finally explain how to evaluate $\Theta_{F}$ in a point $(s,y)\in H^{1}(M;\Z)$. Let 
$f_{s},f_{y}:H_{1}(M;\Z)\to\Z$ be the duals of $[s]$ and $[y]$ respectively. Hence, the dual of a point $(s,y)\in H^{1}(M;\Z)$ is given by $f_{(s,y)}:=sf_{s}+yf_{y}$. Since by definition $f_{(s,y)}(u)=y$ and $f_{(s,y)}(t)=s$ one has
$$
\begin{array}{ccc}
\Theta_{F}(s,y)& = &X^{f_{(s,y)}(u^{2})}-(1+X^{f_{(s,y)}(t)}+X^{f_{(s,y)}(t^{-1})})X^{f_{(s,y)}(u)}+1\\
\\
			& = & X^{2y}-(1+X^{s}+X^{-s})X^{y}+1
\end{array}
$$

\subsubsection{The topology of the fiber} 
Let $\Sigma$ be the fiber of the fibration determined by the point $(s,y)\in \R^{+}\cdot F$, where $F$ is the segment joining the points  $(-\frac{1}{2},\frac{1}{2})$ and $(\frac{1}{2},\frac{1}{2})$. Since every fiber is (Thurston) norm minimizing in its homology class, we have:
\begin{equation}
	\label{E:TOPFIB}
||(s,y)||_{T}=|y|=-\chi(\Sigma)=2\text{genus}(\Sigma)-2+\#\{\text{boundary components of }\Sigma\}.
\end{equation}
We calculate now the number of boundary components of $\Sigma$ as follows. 
We choose a basis $\{[S_{1}], [S_{2}]\}$ of $H_{2}(M,\partial M;\Z)$ by taking two 
Seifert surfaces of the components of the $6_{2}^{2}$ link shown in Figure~\ref{F:622}.
By Remark~\ref{R:links}, we have that
$$
\partial_{*}[S_{1}] = l_{1}-{\rm Lk}(L_{1},L_{2})m_{2}\hspace{5mm} \partial_{*}[S_{2}] = l_{2}-{\rm Lk}(L_{2},L_{1})m_{1}.
$$
A straightforward computation shows that $|{\rm Lk}(L_{1},L_{2})|=|{\rm Lk}(L_{2},L_{1})|=3$.
Letting $A=\left(\begin{smallmatrix} 0 & 3\\ 3 & 0 \end{smallmatrix}\right)$,
Proposition~\ref{prop:cc:boundary} implies that the number of connected components of $\partial \Sigma \cap T_j$
is $\gcd(a_j,b_j)$ where $a=(s,y)A=(3y,3s)$ and $b=(s,y)$. Therefore the total number of connected components of 
$\partial \Sigma$ is $\gcd(s,3y)+\gcd(3s,y)=\gcd(3,s)+\gcd(3,y)$. Plugging this data into~\eqref{E:TOPFIB} we get
$$
\text{genus}(\Sigma)=|y|+1-\frac{\gcd(3,s)+\gcd(3,y)}{2}.
$$
With notations of Corollary~\ref{cor:slope} the slope of the boundary components
are $3/s$ and $3/y$.

\subsubsection{The singularities of the fiber}
We already observed that $\Sigma$ has $\gcd(3,s)$ boundary components at $T_1$ and
$\gcd(3,y)$ boundary components at $T_2$. \medskip

For any singularity $s$ of $\F$ one needs to determine the slope of $\gamma_s \subset M$ where $\gamma_s$
is the closed orbit of the flow line passing throughout $s$. Since $\F$ has no singularities in the interior of $\mathbf{D}_{n}$
all curves $\gamma_s$ lie in $T_i$ for some $i$. We label the prongs with the capital letters $A,B,C$. One sees that the braid $\beta$
permutes the prongs $(A,B,C)$ to $(C,A,B)$. We denote the corresponding permutation $\pi(\beta)$. 
Since $\pi(\beta)$ has only one cycle, there is only one torus component (see Figure~\ref{torus}). Now when performing the pseudo-Anosov 
braid and isotopy, one needs to understand the rotation in the neighborhood near punctures (see Definition~\ref{def:standardizing}
and Example~\ref{def:standardizing}). \medskip

As usual one can obtain the permutation $\pi(\beta)$ as follows: for each elementary step we have 
a permutation encoding how the isotopy (rotation) acts in the neighborhood near punctures. More precisely:
$$
\pi(\sigma_2) : (A,B,C) \to (A,C^+,B) \qquad \textrm{and} \qquad \pi(\sigma^{-1}_1) : (A,B,C) \to (B,A^-,C).
$$
Composition gives the desired slope:
$$
\pi(\beta) = \pi(\sigma^{-1}_1\sigma_2) = \pi(\sigma_2) \circ \pi(\sigma^{-1}_1).
$$
Hence $\pi(\beta) : (A,B,C) \to (C^+,A^-,B)$ and so $\gamma = [l]$ {\em i.e.} its slope is $0/1$ (no Dehn twist around the meridian).

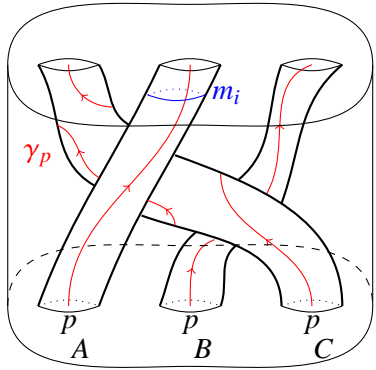
\begin{figure}[htb]
\tikzset{->-/.style={decoration={
  markings,
  mark=at position .5 with {\arrow{>}}},postaction={decorate}}}

\begin{tikzpicture}[scale=0.8]


\draw[smooth,dotted] (-2.5,0) to[out=20,in=160] (-1.5,0);
\draw[smooth] (-2.5,0) to[out=340,in=200] (-1.5,0);
\draw[smooth] (-.5,4) to[out=20,in=160] (.5,4);
\draw[smooth] (-.5,4) to[out=340,in=200] (.5,4);

\draw[red,->-=.8] (-2,0) to[out=90,in=270]  (0,4);

\draw[thick,smooth] (-1.5,0) to[out=70,in=245]  (.5,4);
\draw[thick,smooth] (-2.5,0) to[out=70,in=245] (-.5,4);


\draw[xshift=2cm,smooth,dotted] (-2.5,0) to[out=20,in=160] (-1.5,0);
\draw[xshift=2cm,smooth] (-2.5,0) to[out=340,in=200] (-1.5,0);
\draw[xshift=2cm,smooth] (-.5,4) to[out=20,in=160] (.5,4);
\draw[xshift=2cm,smooth] (-.5,4) to[out=340,in=200] (.5,4);

\draw[smooth,thick]  (-.5,0) to[out=70,in=220] (.1,1.23);
\draw[smooth,thick]  (.8,2.1) to[out=40,in=250] (1.5,4);
\draw[smooth,thick]  (.5,0) to[out=70,in=220] (.75,1);
\draw[smooth,thick]  (1.5,1.7) to[out=70,in=220] (2.5,4);

\draw[red,->-=.8] (0,0) to[out=90,in=200]  (0.4,1.1);
\draw[red,->-=.8] (1.25,1.85) to[out=60,in=200]  (2,4);


\draw[xshift=4cm,smooth,dotted] (-2.5,0) to[out=20,in=160] (-1.5,0);
\draw[xshift=4cm,smooth] (-2.5,0) to[out=340,in=200] (-1.5,0);
\draw[xshift=-2cm,smooth] (-.5,4) to[out=20,in=160] (.5,4);
\draw[xshift=-2cm,smooth] (-.5,4) to[out=340,in=200] (.5,4);

\draw[smooth,thick] (-2.5,4) to[out=300,in=150] (-1.58,2);
\draw[smooth,thick] (-.8,1.5) to[out=340,in=90] (1.5,0);

\draw[smooth,thick] (-1.5,4) to[out=300,in=150] (-1,3);
\draw[smooth,thick] (-.25,2.5) to[out=340,in=90] (2.5,0);

\draw[red,->-=.8] (2,0) to[out=90,in=270]  (0.5,2.2);
\draw[red,->-=.8] (-.25,1.33) to[out=90,in=340]  (-.7,1.75);
\draw[red,->-=.8] (-1.5,2.13) to[out=150,in=340]  (-2.2,3);
\draw[red,->-=.8] (-1.28,3.3) to[out=180,in=270]  (-2,4);


\draw[smooth] (-3,0) to[out=270, in=180] (0,-1) to[out=0,in=270] (3,0);
\draw[dashed, yscale=-1,smooth] (-3,0) to[out=270, in=180] (0,-1) to[out=0,in=270] (3,0);
\draw (-3,0) to (-3,4);


\draw[xshift=6cm] (-3,0) to (-3,4);
\draw[yshift=4cm,smooth] (-3,0) to[out=270, in=180] (0,-1) to[out=0,in=270] (3,0);
\draw[yshift=4cm,  yscale=-1,smooth] (-3,0) to[out=270, in=180] (0,-1) to[out=0,in=270] (3,0);


\draw[blue] (-.7,3.5) to[out=340,in=200 ](0.25,3.5);
\draw[dotted,blue] (-.7,3.5) to[out=20,in=160](0.25,3.5);


\draw[blue] (0.6,3.5) node {$m_{i}$};
\draw (0,-.3) node {$ p$};
\draw (-2,-.3) node {$ p$};
\draw (2,-.3) node {$ p$};

\draw (0.2,-.7) node {$ B$};
\draw (-1.8,-.7) node {$ A$};
\draw (2.2,-.7) node {$ C$};

\draw[red] (-2.5,2.5) node{$\gamma_{p}$};
\end{tikzpicture}

\caption{Computing the slope of the curve $\gamma_p$}
\label{torus}
\end{figure}

Concretely the slope of $\gamma_p$ is $p/q=0/1$. The slope for the other component
$T_2$ is $1/0$. We apply Proposition~\ref{prop:cc:singu}
at each connected component of $\partial\Sigma\cap T_{j}$ as follows.
\begin{enumerate}
\item For $T_1$ (coordinate $t$): one has $(a_1,b_1)=(3y,s)$. Thus at each connected component
(of the $\gcd(3y,s)$ components) there is a singularity of $\F_\phi$ of type $3y/\gcd(3,s)$-prong.
\item For $T_2$ (coordinate $t$): one has $(a_2,b_2)=(3s,y)$. Thus at each connected component
(of the $\gcd(3s,y)$ components) there is a singularity of $\F_\phi$ of type $y/\gcd(3,y)$-prong.
\end{enumerate}

\subsubsection{Orientability of singular foliation}

To compute the homological dilatation we will use the Alexander polynomial
$\Delta_{M}(t,u)=u^2-u(-t^{-1}+1-t) + 1$ (up to a factor). By Theorem~\ref{thm:homological},
the homological dilatation is the maximal root of $\Delta_M$ (in absolute value) evaluated in 
$(s,y)$, namely it is the maximal root (in absolute value) of the polynomial:
$$
Q(X) = X^{2y}-(1-X^{s}-X^{-s})X^{y}+1, \qquad y>s
$$
Recall that the stretch factor is the maximal root of
$$
P(X) = X^{2y}-(1+X^{s}+X^{-s})X^{y}+1.
$$
Since $Q(-X)=P(X)$ when $s$ is odd and $y$ is even, we draw that the invariant 
measured foliation is orientable if $s$ is odd and $y$ is even.

In the rest of this paper we will focus only on computing the Teichm\"uller polynomials, 
for the rest (Thurston norm, topology of fivers and type of singularities) can be performed using the methods presented for the simplest pseudo-Anosov braid. 
\subsection{The Teichm\"uller polynomial of $\sigma_{2}\sigma_{1}^{-1}\sigma_{2}\in B_{3}$} 

The link complement $M=\mathbf{S}^{3}\setminus L(\beta)$ of the braid $\beta=\sigma_{2}\sigma_{1}^{-1}\sigma_{2}$ is homeomorphic to the \emph{magic manifold} (see \cite{KT1} for more details). This braid fixes one strand and permutes the other two, hence the $H$-covering $\widetilde{\mathbf{D}_{3}}$ is a $\Z^{2}$-covering. Let us denote by $(t_{A},t_{B})$ the variables of the deck transformation group of $\pi:\widetilde{\mathbf{D}_{3}}\to\mathbf{D}_{3}$ corresponding to the permuted and fixed strands, respectively.
From the automaton presented in Figure~\ref{fg0} one sees that the path in the automaton $\mathcal{N}(h,\tau_{0})$ representing $f_{\sigma_{2}\sigma_{1}^{-1}\sigma_{2}}$ is the composition of three folding maps. By Theorem~\ref{thm:second:main}, one has
$$
\begin{array}{l}
w_{1}=\eta_{1}(1,C^{+})=(1,C^{+}), \textrm{ since } \eta_1(A,B,C) =(A,B,C),\\
w_{2}=\eta_{2}(A^{-1},1)=(A^{-1},1) \textrm{ since }\eta_{2}(A,B,C)= \pi_1\circ\eta_{1}(A,B,C)= (A,C,B),\\
w_{3}=\eta_{3}(1,C^{+})=(1,B^{+}),  \textrm{ since } \eta_{3}(A,B,C)=\pi_2\circ\pi_{1}(A,B,C) = (C,A,B).
\end{array}
$$
Hence $t(w_{1})=(1,t_{A})$, $t(w_{2})=(t^{-1}_{A},1)$ and $t(w_{3})=(1,t_{B})$ and the incidence matrix 
of the train track map $M(\widetilde{T})$ representing a lift of $f_{\sigma_{2}\sigma_{1}^{-1}\sigma_{2}}$ is
$$
M(\widetilde{T})=
\begin{pmatrix}
1 & t_{A} \\
0 & t_{A}
\end{pmatrix}
\begin{pmatrix}
t_{A}^{-1} & 0 \\
t_{A}^{-1} & 1
\end{pmatrix}
\begin{pmatrix}
1 & t_{B} \\
0 & t_{B}
\end{pmatrix}=
\begin{pmatrix}
t_{A}^{-1}+1 & t_{A}t_{B}+t_{B}+t_{B}t_{A}^{-1} \\
1 & t_{A}t_{B}+t_{B}
\end{pmatrix}.
$$
Taking the characteristic polynomial we get 
$$
 \Theta_{F}(t_{A},t_{B},u)=u^{2}-(t_{A}t_{B}+t_{B}+1+t_{A}^{-1})u+t_{B}.
$$

\section{The Teichm\"uller polynomial of $\sigma^{-1}_1\sigma_2 \sigma_3\in B_{4}$} 
\label{EXB4}

\subsection{Invariant train track}

The homeomorphism $f_{\sigma^{-1}_1\sigma_2 \sigma_3}$ is a pseudo-Anosov 
homeomorphism: it leaves invariant the train track $\tau_{1}$ (see Figure~\ref{fg6}) and 
the train track map $T : \tau_1 \rightarrow \tau_1$ induced by $f_{\sigma^{-1}_1\sigma_2 \sigma_3}(\tau_1)\prec\tau_1$
is given by
$$
\begin{array}{l}
a\to cbaa \\
b\to c \\
c\to d \\
d\to ba \\
\end{array}
$$
Its incidence matrix $M(T)=
\left(\begin{smallmatrix}
2 & 1 & 1 & 0 \\
0 & 0 & 1 & 0 \\
0 & 0 & 0 & 1 \\
1 & 1 & 0 & 0
\end{smallmatrix}\right)
$
is irreducible. In Figure~\ref{fg6} we depict part of the folding automaton $\mathcal{N}(\tau_{1},h)$. 

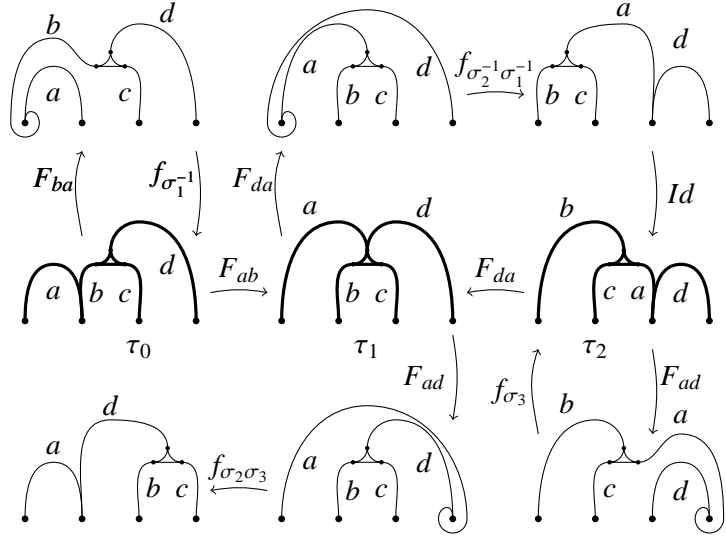
\begin{figure}[htbp]
 \begin{tikzpicture}[scale=0.75, inner sep=1mm]

\foreach \x in {1.5,.5,-.5,-1.5}
	\draw [fill=black] (-\x,0) circle (0.05);
\draw [fill=black] (-.25,1) circle (0.03);
\draw [fill=black] (.25,1) circle (0.03);
\draw [fill=black] (0,1.25) circle (0.03);
\draw [very thick, smooth] (-.25,1) to[out=0,in=270] (0,1.25) to[out=270,in=180] (.25,1) to[out=180,in=0] (-.25,1);
\draw [very thick, smooth] (-.5,0) to[out=90,in=180] (-.25,1); 
\draw [very thick, xscale=-1,smooth] (-.5,0) to[out=90,in=180] (-.25,1); 
\draw [very thick, smooth] (-1.5,0) to[out=90,in=180] (-.5,1.75) to[out=0,in=90] (0,1.25);
\draw [very thick, xscale=-1,smooth] (-1.5,0) to[out=90,in=180] (-.5,1.75) to[out=0,in=90] (0,1.25);
\draw (-1,2) node {$a$};
\draw (-.25,.5) node {$b$};
\draw (.25,.45) node {$c$};
\draw (1,2) node {$d$};
\draw (0,-.5) node {$\tau_1$};
 

\foreach \x in {1.5,.5,-.5,-1.5}
	\draw [xshift=-4.5cm,fill=black] (-\x,0) circle (0.05);
\draw [xshift=-4.5cm,fill=black] (-.25,1) circle (0.03);
\draw [xshift=-4.5cm,fill=black] (.25,1) circle (0.03);
\draw [xshift=-4.5cm,fill=black] (0,1.25) circle (0.03);
\draw [very thick, xshift=-4.5cm,smooth] (-.25,1) to[out=0,in=270] (0,1.25) to[out=270,in=180] (.25,1) to[out=180,in=0] (-.25,1);
\draw [very thick, smooth,xshift=-4.5cm] (-.5,0) to[out=90,in=180] (-.25,1); 
\draw [very thick, xshift=-4.5cm,xscale=-1,smooth] (-.5,0) to[out=90,in=180] (-.25,1); 
\draw[very thick, smooth] (-6,0) to[out=90,in=180] (-5.5,1) to[out=0,in=90] (-5,0);
\draw[very thick, smooth] (-4.5,1.25) to[out=90,in=180] (-4,1.75) to[out=0,in=90] (-3,0);
\draw (-5.5,.5) node {$a$};
\draw (-4.75,.5) node {$b$};
\draw (-4.25,.45) node {$c$};
\draw (-3.5,1) node {$d$};
\draw [thick] (-4,-.5) node {$\tau_0$};


\foreach \x in {1.5,.5,-.5,-1.5}
	\draw [xshift=4.5cm,fill=black] (-\x,0) circle (0.05);
\draw [xshift=4.5cm,fill=black] (-.25,1) circle (0.03);
\draw [xshift=4.5cm,fill=black] (.25,1) circle (0.03);
\draw [xshift=4.5cm,fill=black] (0,1.25) circle (0.03);
\draw [very thick, xscale=-1,xshift=-4.5cm,smooth] (-.25,1) to[out=0,in=270] (0,1.25) to[out=270,in=180] (.25,1) to[out=180,in=0] (-.25,1);
\draw [very thick, xscale=-1,smooth,xshift=-4.5cm] (-.5,0) to[out=90,in=180] (-.25,1); 
\draw [very thick, xscale=-1,xshift=-4.5cm,xscale=-1,smooth] (-.5,0) to[out=90,in=180] (-.25,1); 
\draw[very thick, xscale=-1,smooth] (-6,0) to[out=90,in=180] (-5.5,1) to[out=0,in=90] (-5,0);
\draw[very thick, xscale=-1,smooth] (-4.5,1.25) to[out=90,in=180] (-4,1.75) to[out=0,in=90] (-3,0);
\draw[xscale=-1,] (-5.5,.5) node {$d$};
\draw[xscale=-1,] (-4.75,.5) node {$a$};
\draw[xscale=-1,] (-4.25,.5) node {$c$};
\draw[xscale=-1,] (-3.5,2) node {$b$};
\draw (4,-.5) node {$\tau_2$};

\foreach \x in {1.5,.5,-.5,-1.5}
	\draw [yshift=3.5cm,xshift=-4.5cm,fill=black] (-\x,0) circle (0.05);
\draw [yshift=3.5cm,xshift=-4.5cm,fill=black] (-.25,1) circle (0.03);
\draw [yshift=3.5cm,xshift=-4.5cm,fill=black] (.25,1) circle (0.03);
\draw [yshift=3.5cm,xshift=-4.5cm,fill=black] (0,1.25) circle (0.03);
\draw [yshift=3.5cm,xshift=-4.5cm,smooth] (-.25,1) to[out=0,in=270] (0,1.25) to[out=270,in=180] (.25,1) to[out=180,in=0] (-.25,1);
\draw [smooth] (-4.75,4.5) to[out=180,in=0] (-5.5,5) to[out=180,in=90] (-6.25,3.5) to[out=270,in=180] (-6,3.25) to[out=0,in=270] (-5.75,3.5) to[out=90,in=0] (-5.90,3.7) to[out=180,in=90](-6,3.5);

\draw [yshift=3.5cm,xshift=-4.5cm,xscale=-1,smooth] (-.5,0) to[out=90,in=180] (-.25,1); 
\draw[yshift=3.5cm,smooth] (-6,0) to[out=90,in=180] (-5.5,1) to[out=0,in=90] (-5,0);
\draw[yshift=3.5cm,smooth] (-4.5,1.25) to[out=90,in=180] (-4,1.75) to[out=0,in=90] (-3,0);
\draw (-5.5,4) node {$a$};
\draw (-5.5,5.25) node {$b$};
\draw (-4.25,4) node {$c$};
\draw (-3.5,5.45) node {$d$};


\foreach \x in {1.5,.5,-.5,-1.5}
	\draw [yshift=3.5cm,fill=black] (-\x,0) circle (0.05);
\draw [yshift=3.5cm,fill=black] (-.25,1) circle (0.03);
\draw [yshift=3.5cm,fill=black] (.25,1) circle (0.03);
\draw [yshift=3.5cm,fill=black] (0,1.25) circle (0.03);
\draw [yshift=3.5cm,smooth] (-.25,1) to[out=0,in=270] (0,1.25) to[out=270,in=180] (.25,1) to[out=180,in=0] (-.25,1);
\draw [yshift=3.5cm,smooth] (-.5,0) to[out=90,in=180] (-.25,1); 
\draw [yshift=3.5cm,xscale=-1,smooth] (-.5,0) to[out=90,in=180] (-.25,1); 
\draw [yshift=3.5cm,smooth] (-1.5,0) to[out=90,in=180] (-.5,1.75) to[out=0,in=90] (0,1.25);
\draw [smooth] (1.5,3.5) to[out=90,in=0] (0,5.5) to[out=180,in=90](-1.75,3.5) to[out=270,in=180](-1.5,3.25) to[out=0,in=270](-1.25,3.5) to[out=90,in=0](-1.4,3.7) to[out=180,in=90](-1.5,3.5);

\draw (-1,4.5) node {$a$};
\draw (-.25,4) node {$b$};
\draw (.25,3.95) node {$c$};
\draw (1,4.5) node {$d$};


\foreach \x in {1.5,.5,-.5,-1.5}
	\draw [yshift=3.5cm,xshift=4.5cm,fill=black] (-\x,0) circle (0.05);
\draw [yshift=3.5cm,xshift=3.5cm,fill=black] (-.25,1) circle (0.03);
\draw [yshift=3.5cm,xshift=3.5cm,fill=black] (.25,1) circle (0.03);
\draw [yshift=3.5cm,xshift=3.5cm,fill=black] (0,1.25) circle (0.03);
\draw [yshift=3.5cm,xshift=3.5cm,smooth] (-.25,1) to[out=0,in=270] (0,1.25) to[out=270,in=180] (.25,1) to[out=180,in=0] (-.25,1);
\draw [yshift=3.5cm,xshift=3.5cm,smooth] (-.5,0) to[out=90,in=180] (-.25,1); 
\draw [yshift=3.5cm,xshift=3.5cm,xscale=-1,smooth] (-.5,0) to[out=90,in=180] (-.25,1); 
\draw[yshift=3.5cm,xshift=11cm,smooth] (-6,0) to[out=90,in=180] (-5.5,1) to[out=0,in=90] (-5,0);
\draw [xshift=-.5cm,smooth] (4,4.75) to[out=90,in=180] (5,5.25) to[out=0,in=90] (5.5,3.5);

\draw [yshift=3.5cm,xshift=5.5cm] (-1,2) node {$a$};
\draw [yshift=3.5cm,xshift=4.25cm]  (-1,.5) node {$b$};
\draw [yshift=3.5cm,xshift=2.75cm]  (1,.45) node {$c$};
\draw [yshift=3cm,xshift=4.5cm]  (1,2) node {$d$};


\foreach \x in {1.5,.5,-.5,-1.5}
	\draw [xscale=-1,yshift=-3.5cm,xshift=4.5cm,fill=black] (-\x,0) circle (0.05);
\draw [xscale=-1,yshift=-3.5cm,xshift=3.5cm,fill=black] (-.25,1) circle (0.03);
\draw [xscale=-1,yshift=-3.5cm,xshift=3.5cm,fill=black] (.25,1) circle (0.03);
\draw [xscale=-1,yshift=-3.5cm,xshift=3.5cm,fill=black] (0,1.25) circle (0.03);
\draw [xscale=-1,yshift=-3.5cm,xshift=3.5cm,smooth] (-.25,1) to[out=0,in=270] (0,1.25) to[out=270,in=180] (.25,1) to[out=180,in=0] (-.25,1);
\draw [xscale=-1,yshift=-3.5cm,xshift=3.5cm,smooth] (-.5,0) to[out=90,in=180] (-.25,1); 
\draw [xscale=-1,yshift=-3.5cm,xshift=3.5cm,xscale=-1,smooth] (-.5,0) to[out=90,in=180] (-.25,1); 
\draw[xscale=-1,yshift=-3.5cm,xshift=11cm,smooth] (-6,0) to[out=90,in=180] (-5.5,1) to[out=0,in=90] (-5,0);
\draw [yshift=-7cm,xscale=-1,xshift=-.5cm,smooth] (4,4.75) to[out=90,in=180] (5,5.25) to[out=0,in=90] (5.5,3.5);

\draw [xscale=-1,yshift=-3.5cm,xshift=5.5cm] (-1,2) node {$d$};
\draw [xscale=-1,yshift=-3.5cm,xshift=4.25cm]  (-1,.5) node {$c$};
\draw [xscale=-1,yshift=-3.4cm,xshift=2.75cm]  (1,.45) node {$b$};
\draw [xscale=-1,yshift=-4.25cm,xshift=4.5cm]  (1,2) node {$a$};


\foreach \x in {1.5,.5,-.5,-1.5}
	\draw [xscale=-1,yshift=-3.5cm,fill=black] (-\x,0) circle (0.05);
\draw [xscale=-1,yshift=-3.5cm,fill=black] (-.25,1) circle (0.03);
\draw [xscale=-1,yshift=-3.5cm,fill=black] (.25,1) circle (0.03);
\draw [xscale=-1,yshift=-3.5cm,fill=black] (0,1.25) circle (0.03);
\draw [xscale=-1,yshift=-3.5cm,smooth] (-.25,1) to[out=0,in=270] (0,1.25) to[out=270,in=180] (.25,1) to[out=180,in=0] (-.25,1);
\draw [xscale=-1,yshift=-3.5cm,smooth] (-.5,0) to[out=90,in=180] (-.25,1); 
\draw [xscale=-1,yshift=-3.5cm,xscale=-1,smooth] (-.5,0) to[out=90,in=180] (-.25,1); 
\draw [xscale=-1,yshift=-3.5cm,smooth] (-1.5,0) to[out=90,in=180] (-.5,1.75) to[out=0,in=90] (0,1.25);
\draw [xscale=-1,smooth,yshift=-7cm] (1.5,3.5) to[out=90,in=0] (0,5.5) to[out=180,in=90](-1.75,3.5) to[out=270,in=180](-1.5,3.25) to[out=0,in=270](-1.25,3.5) to[out=90,in=0](-1.4,3.7) to[out=180,in=90](-1.5,3.5);

\draw[xscale=-1,yshift=-7cm] (-1,4.5) node {$d$};
\draw[xscale=-1,yshift=-7cm] (-.25,4) node {$c$};
\draw[xscale=-1,yshift=-7cm] (.25,4) node {$b$};
\draw[xscale=-1,yshift=-7cm] (1,4.5) node {$a$};


\foreach \x in {1.5,.5,-.5,-1.5}
	\draw [yshift=-3.5cm,xshift=4.5cm,fill=black] (-\x,0) circle (0.05);
\draw [yshift=-3.5cm,xshift=4.5cm,fill=black] (-.25,1) circle (0.03);
\draw [yshift=-3.5cm,xshift=4.5cm,fill=black] (.25,1) circle (0.03);
\draw [yshift=-3.5cm,xshift=4.5cm,fill=black] (0,1.25) circle (0.03);
\draw [yshift=-3.5cm,xshift=4.5cm,smooth] (-.25,1) to[out=0,in=270] (0,1.25) to[out=270,in=180] (.25,1) to[out=180,in=0] (-.25,1);
\draw [smooth,xscale=-1,yshift=-7cm] (-4.75,4.5) to[out=180,in=0] (-5.5,5) to[out=180,in=90] (-6.25,3.5) to[out=270,in=180] (-6,3.25) to[out=0,in=270] (-5.75,3.5) to[out=90,in=0] (-5.90,3.7) to[out=180,in=90](-6,3.5);

\draw [yshift=-3.5cm,xshift=4.5cm,smooth] (-.5,0) to[out=90,in=180] (-.25,1); 
\draw[yshift=-3.5cm,xshift=11cm,smooth] (-6,0) to[out=90,in=180] (-5.5,1) to[out=0,in=90] (-5,0);
\draw[xscale=-1,yshift=-3.5cm,smooth] (-4.5,1.25) to[out=90,in=180] (-4,1.75) to[out=0,in=90] (-3,0);
\draw (5.5,-3) node {$d$};
\draw (5.5,-1.7) node {$a$};
\draw (4.25,-3) node {$c$};
\draw (3.5,-1.45) node {$b$};


\draw [->] (1.75,4) to[out=10,in=170] (2.75,4);
\draw (2.25,4.45) node{$f_{\sigma_{2}^{-1}\sigma_{1}^{-1}}$};

\draw [ ->, smooth] (-5,1.5) to[out=100,in=250] (-5,3); 
\draw (-5.5,2.5) node {$F_{ba}$};
\draw [<-, smooth] (-3,1.5) to[out=80,in=280] (-3,3);
\draw (-3.4,2.5) node {$f_{\sigma_{1}^{-1}}$};
\draw [xshift=3.5cm,->, smooth] (-5,1.5) to[out=100,in=250] (-5,3); 
\draw (-2,2.5) node {$F_{da}$};
\draw [xshift=8cm,<-, smooth] (-3,1.5) to[out=80,in=280] (-3,3);
\draw (5.5,2.25) node{$Id$};
\draw [->] (-2.75,.5) to[out=10,in=170] (-1.75,.5);
\draw (-2.25,.9) node{$F_{ab}$};
\draw [->] (2.75,.5) to[out=170,in=10] (1.75,.5);
\draw (2.25,.9) node{$F_{da}$};
\draw [ ->, xshift=8cm, yshift=-3.5cm,smooth] (-5,1.5) to[out=100,in=250] (-5,3); 
\draw (2.5,-1.25) node {$f_{\sigma_{3}}$};
\draw (-5.5,2.5) node {$F_{ba}$};
\draw [->] (1.5,-.25) to[out=280,in=80] (1.5,-1.75);
\draw (1,-1) node{$F_{ad}$};
\draw [<-, smooth,xshift=8cm, yshift=-3.5cm] (-3,1.5) to[out=80,in=280] (-3,3);
\draw (5.5,-1) node {$F_{ad}$};
\draw (-3.4,2.5) node {$f_{\sigma_{1}^{-1}}$};
\draw [yshift=-3.5cm,<-] (-2.75,.5) to[out=10,in=170] (-1.75,.5);
\draw  [yshift=-3.5cm] (-2.25,.9) node{$f_{\sigma_{2}\sigma_{3}}$};

\end{tikzpicture}
 \caption{
 \label{fg6}
Detail of foldings and standardizing braids in the automaton in $B_4$.}
\end{figure}

In this part of the automaton we see three vertices ($\tau_i$ for $i=1,2,3$, bolder train tracks).
More precisely the (standard) folding $F_{ab}$ induces a train track map $T_1 : \tau_1 \rightarrow \tau_2$ that represents 
$[\Id]\in{\rm Mod}(\mathbf{D}_{4})$. On the other hand the folding $F_{ad}$ induces a train track map 
$T_2 : \tau_2 \rightarrow \tau_1$ that represents $[f_{\sigma_2 \sigma_3}] \in{\rm Mod}(\mathbf{D}_{4})$. 
Finally $F_{ba}$ induces a train track map $T_3 : \tau_1 \rightarrow \tau_1$ that represents 
$[f_{\sigma^{-1}_1}]\in{\rm Mod}(\mathbf{D}_{4})$.  Hence the closed path representing $f_{\sigma^{-1}_1\sigma_2 \sigma_3}$ is given
by the sequence of train track maps in the labelled automaton
$$
(\tau_{1},\varepsilon_1)\stackrel{T_1}{\longrightarrow}(\tau_{2},\varepsilon_2)\stackrel{T_2}{\longrightarrow}(\tau_{1},\varepsilon_3)
\stackrel{T_3}{\longrightarrow}(\tau_{1},\varepsilon_3) \stackrel{R}{\longrightarrow}(\tau_{1},\varepsilon_1)
$$
with the relabeling $R:(\tau_{1},\varepsilon_3) \longrightarrow (\tau_{1},\varepsilon_1)$. Direct computations gives
(in the ordered basis $(a,b,c,d)$):
$$
M(T_1) = \left( \begin{smallmatrix} 1&1&0&0\\ 0&1&0&0 \\ 0&0&1&0 \\ 0&0&0&1  \end{smallmatrix} \right), \
M(T_2) = \left( \begin{smallmatrix} 1&0&0&1\\ 0&1&0&0 \\ 0&0&1&0 \\ 0&0&0&1  \end{smallmatrix} \right), \
M(T_3) = \left( \begin{smallmatrix} 1&0&0&0\\ 0&1&0&0 \\ 0&0&1&0 \\ 1&0&0&1  \end{smallmatrix} \right), \
M(R) = \left( \begin{smallmatrix} 1&0&0&0\\ 0&0&1&0 \\ 0&0&0&1 \\ 0&1&0&0  \end{smallmatrix} \right).
$$
We recover our incidence matrix $M(T)$ as $M(R\circ T_3\circ T_2\circ T_1)= M(T_1)M(T_2)M(T_3)M(R)$.

\begin{figure}[here]
\begin{tikzpicture}[scale=0.8]


\draw[thick,->] (0,-4.5) to[out=180,in=270] (-4,-1);

\filldraw[fill=white!90!black,smooth] (-1.7,-.2) rectangle (1.7,2.2);
\filldraw[yshift=-3.5cm,fill=white!90!black,smooth] (-1.7,-.2) rectangle (1.7,2.2);
\filldraw[yshift=-7cm,fill=white!90!black,smooth] (-1.7,-.2) rectangle (1.7,2.2);
\filldraw[xshift=-4.5cm,yshift=-3.5cm,fill=white!90!black,smooth] (-1.7,-.2) rectangle (1.7,2.2);
\filldraw[xshift=-4.5cm,yshift=-7cm,fill=white!90!black,smooth] (-1.7,-.2) rectangle (1.7,2.2);
\filldraw[xshift=-4.5cm,fill=white!90!black,smooth] (-1.7,-.2) rectangle (1.7,2.2);





\draw[yshift=.5cm, thick, ->,smooth] (-6.5,1) to[out=145,in=90] (-7,.5) to[out=270,in=210] (-6.5,0);

\draw[thick, smooth,->] (-3,1.5) to[out=45,in=180] (-2.25,1.75) to[out=0,in=135] (-1.5,1.5);


\draw[thick, smooth,->] (-1.75,-.3) to[out=225,in=45] (-2.75,-1.3) ;


\draw[yshift=-3cm, thick, ->,smooth] (-6.5,1) to[out=145,in=90] (-7,.5) to[out=270,in=210] (-6.5,0);

\draw[yshift=-3.5cm, thick, smooth,->] (-3,1.5) to[out=45,in=180] (-2.25,1.75) to[out=0,in=135] (-1.5,1.5);


\draw[yshift=-3.5cm,thick, smooth,->] (-1.75,-.3) to[out=225,in=45] (-2.75,-1.3) ;


\draw[yshift=-6.5cm, thick, ->,smooth] (-6.5,1) to[out=145,in=90] (-7,.5) to[out=270,in=210] (-6.5,0);

\draw[yshift=-7cm,thick, smooth,->] (-3,1.5) to[out=45,in=180] (-2.25,1.75) to[out=0,in=135] (-1.5,1.5);


\draw[dashed,thick,->] (0,-4.5) to[out=180,in=270] (-4,-1);


\draw (-7.5,1) node{$F_{ba}$};
\draw[yshift=-3.5cm] (-7.5,1) node{$F_{da}$};
\draw[yshift=-7cm] (-7.5,1) node{$F_{ca}$};

\draw (-2.25,1.4) node{$F_{ab}'$};
\draw[yshift=-3.5cm] (-2.25,1.4) node{$F_{ad}^{''}$};
\draw[yshift=-7cm] (-2.25,1.4) node{$F_{ac}^{''}$};

\draw (-1.6,-.8) node{$F_{ad}'$};
\draw (-3,-4.4) node{$F_{ac}'$};
\draw (.4,-4.4) node{$F_{ab}^{''}$};


\foreach \x in {1.5,.5,-.5,-1.5}
	\draw [fill=black] (-\x,0) circle (0.05);
\draw [fill=black] (-.25,1) circle (0.03);
\draw [fill=black] (.25,1) circle (0.03);
\draw [fill=black] (0,1.25) circle (0.03);
\draw [very thick, smooth] (-.25,1) to[out=0,in=270] (0,1.25) to[out=270,in=180] (.25,1) to[out=180,in=0] (-.25,1);
\draw [very thick, smooth] (-.5,0) to[out=90,in=180] (-.25,1); 
\draw [very thick, xscale=-1,smooth] (-.5,0) to[out=90,in=180] (-.25,1); 
\draw [very thick, smooth] (-1.5,0) to[out=90,in=180] (-.5,1.75) to[out=0,in=90] (0,1.25);
\draw [very thick, xscale=-1,smooth] (-1.5,0) to[out=90,in=180] (-.5,1.75) to[out=0,in=90] (0,1.25);
\draw (-1,2) node {$a$};
\draw (-.25,.5) node {$b$};
\draw (.25,.45) node {$c$};
\draw (1,2) node {$d$};


\foreach \x in {1.5,.5,-.5,-1.5}
	\draw [yshift=-3.5cm,fill=black] (-\x,0) circle (0.05);
\draw [yshift=-3.5cm,fill=black] (-.25,1) circle (0.03);
\draw [yshift=-3.5cm,fill=black] (.25,1) circle (0.03);
\draw [yshift=-3.5cm,fill=black] (0,1.25) circle (0.03);
\draw [yshift=-3.5cm,very thick, smooth] (-.25,1) to[out=0,in=270] (0,1.25) to[out=270,in=180] (.25,1) to[out=180,in=0] (-.25,1);
\draw [yshift=-3.5cm,very thick, smooth] (-.5,0) to[out=90,in=180] (-.25,1); 
\draw [yshift=-3.5cm,very thick, xscale=-1,smooth] (-.5,0) to[out=90,in=180] (-.25,1); 
\draw [yshift=-3.5cm,very thick, smooth] (-1.5,0) to[out=90,in=180] (-.5,1.75) to[out=0,in=90] (0,1.25);
\draw [yshift=-3.5cm,very thick, xscale=-1,smooth] (-1.5,0) to[out=90,in=180] (-.5,1.75) to[out=0,in=90] (0,1.25);
\draw[yshift=-3.5cm] (-1,2) node {$a$};
\draw[yshift=-3.5cm] (-.25,.5) node {$d$};
\draw[yshift=-3.5cm] (.25,.45) node {$b$};
\draw[yshift=-3.5cm] (1,2) node {$c$};


\foreach \x in {1.5,.5,-.5,-1.5}
	\draw [yshift=-7cm,fill=black] (-\x,0) circle (0.05);
\draw [yshift=-7cm,fill=black] (-.25,1) circle (0.03);
\draw [yshift=-7cm,fill=black] (.25,1) circle (0.03);
\draw [yshift=-7cm,fill=black] (0,1.25) circle (0.03);
\draw [yshift=-7cm,very thick, smooth] (-.25,1) to[out=0,in=270] (0,1.25) to[out=270,in=180] (.25,1) to[out=180,in=0] (-.25,1);
\draw [yshift=-7cm,very thick, smooth] (-.5,0) to[out=90,in=180] (-.25,1); 
\draw [yshift=-7cm,very thick, xscale=-1,smooth] (-.5,0) to[out=90,in=180] (-.25,1); 
\draw [yshift=-7cm,very thick, smooth] (-1.5,0) to[out=90,in=180] (-.5,1.75) to[out=0,in=90] (0,1.25);
\draw [yshift=-7cm,very thick, xscale=-1,smooth] (-1.5,0) to[out=90,in=180] (-.5,1.75) to[out=0,in=90] (0,1.25);
\draw[yshift=-7cm] (-1,2) node {$a$};
\draw[yshift=-7cm] (-.25,.5) node {$c$};
\draw[yshift=-7cm] (.25,.45) node {$
d$};
\draw[yshift=-7cm] (1,2) node {$b$};



\foreach \x in {1.5,.5,-.5,-1.5}
	\draw [xshift=-4.5cm,fill=black] (-\x,0) circle (0.05);
\draw [xshift=-4.5cm,fill=black] (-.25,1) circle (0.03);
\draw [xshift=-4.5cm,fill=black] (.25,1) circle (0.03);
\draw [xshift=-4.5cm,fill=black] (0,1.25) circle (0.03);
\draw [very thick, xshift=-4.5cm,smooth] (-.25,1) to[out=0,in=270] (0,1.25) to[out=270,in=180] (.25,1) to[out=180,in=0] (-.25,1);
\draw [very thick, smooth,xshift=-4.5cm] (-.5,0) to[out=90,in=180] (-.25,1); 
\draw [very thick, xshift=-4.5cm,xscale=-1,smooth] (-.5,0) to[out=90,in=180] (-.25,1); 
\draw[very thick, smooth] (-6,0) to[out=90,in=180] (-5.5,1) to[out=0,in=90] (-5,0);
\draw[very thick, smooth] (-4.5,1.25) to[out=90,in=180] (-4,1.75) to[out=0,in=90] (-3,0);
\draw (-5.5,.5) node {$a$};
\draw (-4.75,.5) node {$b$};
\draw (-4.25,.45) node {$c$};
\draw (-3.5,1) node {$d$};
\draw (-6,-.4) node{$A$};
\draw (-5,-.4) node{$B$};
\draw (-4,-.4) node{$C$};
\draw (-3,-.4) node{$D$};


\foreach \x in {1.5,.5,-.5,-1.5}
	\draw [yshift=-3.5cm,xshift=-4.5cm,fill=black] (-\x,0) circle (0.05);
\draw [yshift=-3.5cm,xshift=-4.5cm,fill=black] (-.25,1) circle (0.03);
\draw [yshift=-3.5cm,xshift=-4.5cm,fill=black] (.25,1) circle (0.03);
\draw [yshift=-3.5cm,xshift=-4.5cm,fill=black] (0,1.25) circle (0.03);
\draw [yshift=-3.5cm,very thick, xshift=-4.5cm,smooth] (-.25,1) to[out=0,in=270] (0,1.25) to[out=270,in=180] (.25,1) to[out=180,in=0] (-.25,1);
\draw [yshift=-3.5cm,very thick, smooth,xshift=-4.5cm] (-.5,0) to[out=90,in=180] (-.25,1); 
\draw [yshift=-3.5cm,very thick, xshift=-4.5cm,xscale=-1,smooth] (-.5,0) to[out=90,in=180] (-.25,1); 
\draw[yshift=-3.5cm,very thick, smooth] (-6,0) to[out=90,in=180] (-5.5,1) to[out=0,in=90] (-5,0);
\draw[yshift=-3.5cm,very thick, smooth] (-4.5,1.25) to[out=90,in=180] (-4,1.75) to[out=0,in=90] (-3,0);
\draw[yshift=-3.5cm] (-5.5,.5) node {$a$};
\draw[yshift=-3.5cm] (-4.75,.5) node {$d$};
\draw[yshift=-3.5cm] (-4.25,.45) node {$b$};
\draw[yshift=-3.5cm] (-3.5,1) node {$c$};


\foreach \x in {1.5,.5,-.5,-1.5}
	\draw [yshift=-7cm,xshift=-4.5cm,fill=black] (-\x,0) circle (0.05);
\draw [yshift=-7cm,xshift=-4.5cm,fill=black] (-.25,1) circle (0.03);
\draw [yshift=-7cm,xshift=-4.5cm,fill=black] (.25,1) circle (0.03);
\draw [yshift=-7cm,xshift=-4.5cm,fill=black] (0,1.25) circle (0.03);
\draw [yshift=-7cm,very thick, xshift=-4.5cm,smooth] (-.25,1) to[out=0,in=270] (0,1.25) to[out=270,in=180] (.25,1) to[out=180,in=0] (-.25,1);
\draw [yshift=-7cm,very thick, smooth,xshift=-4.5cm] (-.5,0) to[out=90,in=180] (-.25,1); 
\draw [yshift=-7cm,very thick, xshift=-4.5cm,xscale=-1,smooth] (-.5,0) to[out=90,in=180] (-.25,1); 
\draw[yshift=-7cm,very thick, smooth] (-6,0) to[out=90,in=180] (-5.5,1) to[out=0,in=90] (-5,0);
\draw[yshift=-7cm,very thick, smooth] (-4.5,1.25) to[out=90,in=180] (-4,1.75) to[out=0,in=90] (-3,0);
\draw[yshift=-7cm] (-5.5,.5) node {$a$};
\draw[yshift=-7cm] (-4.75,.5) node {$c$};
\draw[yshift=-7cm] (-4.25,.45) node {$d$};
\draw[yshift=-7cm] (-3.5,1) node {$b$};

\end{tikzpicture}

\caption{Detail of the labeled folding automaton in $B_4$.}
\label{fg666}
\end{figure}
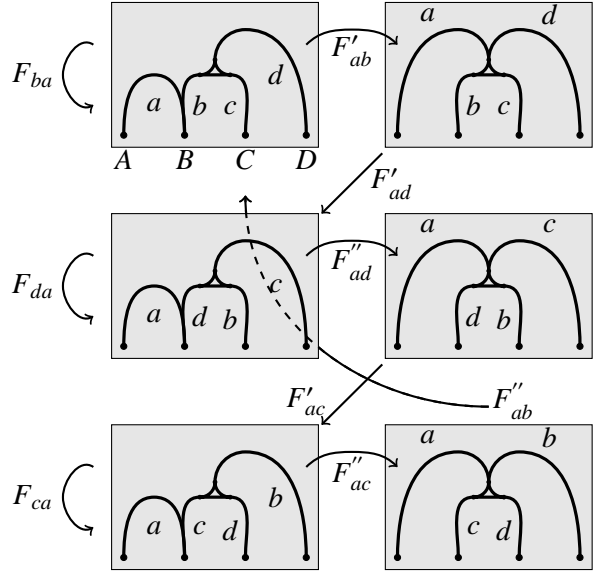

\subsection{Teichm\"uller polynomial}
We now calculate the Teichm\"uller polynomial of the fibered face containing the fibration defined by the suspension of 
$f_{\sigma^{-1}_1\sigma_2 \sigma_3}$. Since the braid permutes all the strands cyclically, $\pi:\widetilde{\mathbf{D}_{4}}\to\mathbf{D}_{4}$ is a 
$\Z$-covering. We now apply Theorem~\ref{thm:second:main} step by step.
\begin{enumerate}
\item The folding $F_{ab}$ is standard, hence $v_{1}=(1,1,1,1)$ and this implies that $D_{1}$ is the identity matrix.
\item The folding $F_{ad}$ is not standard. Following definition \ref{DEF:VECTV}, in $(\tau_{2},\varepsilon_{2})$ we
have $f=a$, $f'=d\in N(T_{2})$ hence we are in case 2. We conclude that $v_{2}=(1,D,D,D)$. Since $\sigma^{-1}_1\sigma_2 \sigma_3$ permutes the strands cyclically, we have that $w_{2}=(1,t,t,t)$. Therefore the matrix $D_{2}=\left( \begin{smallmatrix} 1&0&0&0\\ 0&t&0&0 \\ 0&0&t&0 \\ 0&0&0&t  \end{smallmatrix} \right)$.
\item The folding $F_{ba}$ is not standard. Following definition \ref{DEF:VECTV}, in$(\tau_{3},\varepsilon_{3})$ we
have $f=b$, $f'=a\notin N(T_{2})$ hence we are in case 1. We conclude that $v_{3}=(A^{-1},1,1,1)$. Since $\sigma^{-1}_1\sigma_2 \sigma_3$ permutes the strands cyclically , we have that $w_{2}=(t^{-1},1,1,1)$. Therefore the matrix $D_{3}=\left( \begin{smallmatrix} t^{-1}&0&0&0\\ 0&1&0&0 \\ 0&0&1&0 \\ 0&0&0&1  \end{smallmatrix} \right)$.

\end{enumerate}

Hence, the matrix whose characteristic polynomial is $\Theta_{F}$ is given by:
$$
M(\widetilde{T})=M(T_{1})\left( \begin{smallmatrix} 1&0&0&0\\ 0&1&0&0 \\ 0&0&1&0 \\ 0&0&0&1  \end{smallmatrix} \right)
M(T_{2})\left( \begin{smallmatrix} t^{-1}&0&0&0\\ 0&1&0&0 \\ 0&0&1&0 \\ 0&0&0&1  \end{smallmatrix} \right)M(T_{3})\left( \begin{smallmatrix} 1&0&0&0\\ 0&t&0&0 \\ 0&0&t&0 \\ 0&0&0&t  \end{smallmatrix} \right)M(R)=\begin{pmatrix}
1+t^{-1} & t & t & 0 \\
0 & 0 & t & 0 \\
0 & 0 & 0 & t \\
1 & t & 0 &0
\end{pmatrix}
$$
We conclude that the Teichm\"uller polynomial of $f_{\sigma^{-1}_1\sigma_2 \sigma_3}$ is:
$$
\Theta_{F}(t,u)=u^{4}-(1+t^{-1})u^{3}-(t^{2}+t^{3})u+t^{2}.
$$
This calculation can also be performed without the use of elementary operations:
$\widetilde{\tau_{1}}$ is $\widetilde{f_{\sigma^{-1}_1\sigma_2 \sigma_3}}$-invariant and the corresponding incidence matrix is precisely
$M(\widetilde{T})$ (see~Figure~\ref{fg9}).
 \begin{figure}[here]
\begin{tikzpicture}[scale=0.9]


\foreach \x in {1.5,.5,-.5,-1.5}
	\draw [xshift=-2.5cm,fill=black] (-\x,0) circle (0.05);
\draw [xshift=-2.5cm,fill=black] (-.25,1) circle (0.03);
\draw [xshift=-2.5cm,fill=black] (.25,1) circle (0.03);
\draw [xshift=-2.5cm,fill=black] (0,1.25) circle (0.03);
\draw [xshift=-2.5cm,smooth] (-.25,1) to[out=0,in=270] (0,1.25) to[out=270,in=180] (.25,1) to[out=180,in=0] (-.25,1);
\draw [ smooth,xshift=-2.5cm] (-.5,0) to[out=90,in=180] (-.25,1); 
\draw [xshift=-2.5cm,xscale=-1,smooth] (-.5,0) to[out=90,in=180] (-.25,1); 
\draw[xshift=2cm, smooth] (-6,0) to[out=90,in=180] (-5.5,1) to[out=0,in=90] (-5,0);
\draw[xshift=2cm, smooth] (-4.5,1.25) to[out=90,in=180] (-4,1.75) to[out=0,in=90] (-3,0);
\draw[xshift=2cm] (-5.5,.5) node {$a$};
\draw[xshift=2cm] (-4.75,.5) node {$b$};
\draw[xshift=2cm] (-4.25,.48) node {$c$};
\draw[xshift=2cm] (-3.5,1) node {$d$};
\foreach \x in {1,2,3,4}
\draw [thick,dotted] (-\x,0)--(-\x,-.4);


\foreach \x in {1.5,.5,-.5,-1.5}
	\draw [xshift=2.5cm,fill=black] (-\x,0) circle (0.05);
\draw [xshift=3.5cm,fill=black] (-.25,1) circle (0.03);
\draw [xshift=3.5cm,fill=black] (.25,1) circle (0.03);
\draw [xshift=3.5cm,fill=black] (0,1.25) circle (0.03);

\draw [xshift=3.5cm,smooth] (-.25,1) to[out=0,in=270] (0,1.25) to[out=270,in=180] (.25,1) to[out=180,in=0] (-.25,1);

\draw [ smooth,xshift=3.5cm] (-.5,0) to[out=90,in=180] (-.25,1); 
\draw [xshift=3.5cm,xscale=-1,smooth] (-.5,0) to[out=90,in=180] (-.25,1);
\draw[xscale=-1,xshift=+1cm, smooth] (-4.5,1.25) to[out=90,in=180] (-4,1.75) to[out=0,in=90] (-3,0);
\draw [red, smooth]
(1,0) to[out=90,in=180] (1.5,1) to[out=0,in=180] (2,-.25);
\draw [blue,smooth] (2,-.25) to[out=0,in=180]  (3,1.5) to[out=0, in=0 ] (3,1.25) to[out=180,in=90] (3,0);
\foreach \x in {1,2,3,4}
\draw [xshift=5cm,thick,dotted] (-\x,0)--(-\x,-.4);


\draw (1.5,1.25) node{$a$};
\draw (2.5,-.4) node{$\substack{\scalebox{0.8}{$t^{-1}a$}}$};
\draw (3.4,.5) node{$\substack{\scalebox{0.8}{$t^{-1}b$}}$}; 
\draw (4.4,.5) node{$\substack{\scalebox{0.8}{$t^{-1}c$}}$}; 
\draw (3,2) node{$\substack{\scalebox{0.8}{$t^{-1}d$}}$}; 


\foreach \x in {1.5,.5,-.5,-1.5}
	\draw [yshift=-3cm,xshift=-2.5cm,fill=black] (-\x,0) circle (0.05);
\draw [yshift=-3cm,xshift=-1.5cm,fill=black] (-.25,1) circle (0.03);
\draw [yshift=-3cm,xshift=-1.5cm,fill=black] (.25,1) circle (0.03);
\draw [yshift=-3cm,xshift=-1.5cm,fill=black] (0,1.25) circle (0.03);

\draw [yshift=-3cm,xshift=-1.5cm,smooth] (-.25,1) to[out=0,in=270] (0,1.25) to[out=270,in=180] (.25,1) to[out=180,in=0] (-.25,1);

\draw [smooth,yshift=-3cm,xshift=-1.5cm] (-.5,0) to[out=90,in=180] (-.25,1); 
\draw[xscale=-1,xshift=1.5cm,yshift=-3cm,smooth] (-.5,0) to[out=90,in=180] (-.25,1);
\draw[xscale=-1,xshift=+1cm, smooth] (-4.5,1.25) to[out=90,in=180] (-4,1.75) to[out=0,in=90] (-3,0);
\draw [yshift=-3cm,xshift=-5cm,red, smooth]
(1,0) to[out=90,in=180] (1.5,1) to[out=0,in=180] (2,-.25);
\draw [yshift=-3cm,xshift=-5cm,blue,smooth] (2,-.5) to[out=0,in=180]  (3,1.5) to[out=0, in=0 ] (3,1.25) to[out=180,in=90] (3,0);
\draw [blue,smooth] (-1.5,-1.75) to[out=90,in=0] (-2,-1) to[out=180,in=90] (-2.75,-2) to[out=270,in= 0] (-3,-3.25);
\draw [red,smooth] (-3,-3.5) to[out=180,in=0] (-3.75,-2.5) to[out=180,in=0] (-4,-3.25); 
\draw [blue,smooth] (-4,-3.25) to                    [out=180,in=180] (-3.5,-1.5) to[out=0,in=90] (-3,-3);

\foreach \x in {1,2,3,4}
\draw [yshift=-3cm,thick,dotted] (-\x,0)--(-\x,-.4);


\draw (-3.5,-1.25) node{$ta$};
\draw (-3.5,-2.25) node{$d$};
\draw (-3.5,-3.25) node{$a$};
\draw (-2.4,-3.5) node{$t^{-1}a$};
\draw (-1.2,-1) node{$t^{-1}d$};
\draw (-1.75,-2.5) node{$\substack{\scalebox{0.6}{$t^{-1}b$}}$}; 
\draw (-1.25,-2.5) node{$\substack{\scalebox{0.6}{$t^{-1}c$}}$};


\foreach \x in {1.5,.5,-.5,-1.5}
	\draw [yshift=-3cm,xshift=2.5cm,fill=black] (-\x,0) circle (0.05);
\draw [yshift=-3cm,xshift=2.5cm,fill=black] (-.25,1) circle (0.03);
\draw [yshift=-3cm,xshift=2.5cm,fill=black] (.25,1) circle (0.03);
\draw [yshift=-3cm,xshift=2.5cm,fill=black] (0,1.25) circle (0.03);

\draw [yshift=-3cm,xshift=2.5cm,smooth] (-.25,1) to[out=0,in=270] (0,1.25) to[out=270,in=180] (.25,1) to[out=180,in=0] (-.25,1);

\draw [yshift=-3cm,red, smooth]
(1,0) to[out=90,in=180] (1.5,1) to[out=0,in=180] (2,-.25);
\draw [xshift=5cm,red,smooth] (-3,-3.5) to[out=180,in=0] (-3.75,-2.5) to[out=180,in=0] (-4,-3.25); 
\draw [xshift=5cm,blue,smooth] (-4,-3.25) to[out=180,in=180] (-3.5,-1.5) to[out=0,in=90] (-3,-3);
\draw [blue,smooth] (2,-3.25) to[out=0,in=180] (2.25,-2);
\draw[smooth] (2,-3.5) to[out=0,in=180] (2.5,-2.6) to[out=0,in=90] (3,-3);
\draw[blue,smooth] (2.75,-2) to[out=0,in=90] (3,-3);
\draw[xshift=7cm, yshift=-3cm, smooth] (-4.5,1.25) to[out=90,in=180] (-4,1.75) to[out=0,in=90] (-3,0);

\foreach \x in {1,2,3,4}
\draw [yshift=-3cm, xshift=5cm,thick,dotted] (-\x,0)--(-\x,-.4);


\draw[xshift=5cm] (-3.5,-1.25) node{$ta$};
\draw[xshift=5cm] (-3.5,-2.25) node{$d$};
\draw[xshift=5cm] (-3.5,-3.25) node{$a$};
\draw[xshift=5cm] (-2.4,-3.5) node{$t^{-1}a$};
\draw (2.45,-2.25) node{$\substack{\scalebox{0.8}{$t^{-1}d$}}$};
\draw (3.4,-2.25) node{$\substack{\scalebox{0.8}{$t^{-1}b$}}$};
\draw (3.6,-1.1) node{$\substack{\scalebox{0.8}{$t^{-1}c$}}$};

\draw [->]  (-.75,.5) to[out=10,in=170] (.75,.5);
\draw (0,.9) node{$f_{\sigma_{2}\sigma_{3}}$};

\draw [->]  (-6,-2.5) to[out=10,in=170] (-4.5,-2.5);
\draw (-5,-2.1) node{$f_{\sigma_{1}^{-1}}$};

\draw [->]  (-.75,-2.5) to[out=10,in=170] (.6,-2.5);
\draw (0,-2.1) node{$\scriptstyle {\mathrm Isotopy}$};

\end{tikzpicture}

\caption{The lift of $f_{\sigma_3^{-1} \sigma_2^{-1}\sigma_1}$ to $\widetilde{\mathbf{D}_{4}}$}
\label{fg9}
\end{figure}
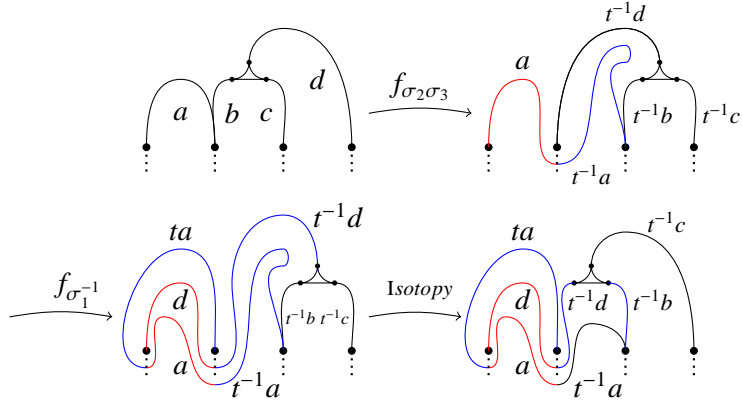

\appendix
\section{An infinite family of braids. }
\label{APENDIXA}
We consider, for each $n\in\N$, $n>0$, the braid $\beta_{n}\in B_{n+4}$ given by
$$
\beta_{n}=\delta_{n}\delta_{3}\sigma_{1}
$$
where $\delta_{n}=(\sigma_{1}\sigma_{2}\cdots\sigma_{n+3})^{-1}$. Consider the train track $(\tau_1,\varepsilon_1)$ given by Figure~\ref{fig10}. We have a train track map $(\tau_1,\varepsilon_1)\stackrel{T}{\to}(\tau_1,\varepsilon_1)$
 representing $f_{\beta_n}$. The loop
$$
(\tau_{1},\varepsilon_1)\stackrel{T_{1}}\longrightarrow(\tau_{2},\varepsilon_2)\stackrel{T_{2}}{\longrightarrow}\cdots
\stackrel{T_{n}}\longrightarrow(\tau_{n+1},\varepsilon_{n+1}) \stackrel{T_{n+1}}\longrightarrow(\tau_{1},\varepsilon_{n+1})
\stackrel{T_{n+2}}\longrightarrow (\tau_{1},\varepsilon_{n+1}) \stackrel{R}\longrightarrow
(\tau_{1},\varepsilon_1)
$$
where:
\begin{enumerate}
\item The train track map $T_{1}$ is induced by folding the edge labelled $a_{1}$ onto
the edge labelled $a_2$: It represents the braid $\sigma_{1}$, 
\item For every $i=2,\dots,n+1$ the train track morphism $T_{i}$ is induced by folding the edge labelled $a_{n+5-i}$ onto
the edge labelled $a_1$ and then applying
 a standardizing braid, and
\item $T_{n+2}$ is induced by the braid $\delta_n$ since $\delta^{-1}_n\circ h(\tau_1)$ is standard,
\end{enumerate}
represents the train track map T, that is, $T=R\circ T_{n+2}\circ T_{n+1}\circ\dots \circ T_2\circ T_1$. 

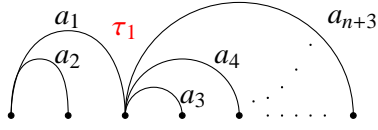
\begin{figure}[here]
\begin{tikzpicture}[scale=0.75]
\draw[smooth,xshift=1cm,yshift=3.5cm] (-8,0) to[out=90,in=180] (-7.5,1) to[out=0,in=90] (-7,0);
\draw[smooth,yshift=3.5cm,xshift=1cm] (-8,0) to[out=90,in=180] (-7,1.5) to[out=0,in=90] (-6,0);
\draw[smooth,yshift=3.5cm,xshift=1cm] (-6,0) to[out=90,in=180] (-5.5,0.5) to[out=0,in=90] (-5,0);
\draw[smooth,yshift=3.5cm,xshift=1cm] (-6,0) to[out=90,in=180] (-5,1) to[out=0,in=90] (-4,0);
\draw[smooth,yshift=3.5cm,xshift=1cm] (-6,0) to[out=90,in=180] (-4,2) to[out=0,in=90] (-2,0);


\draw[red,very thick] (-5,5) node{$\tau_{1}$};
\draw (-6,5.2) node{$a_{1}$};
\draw (-6,4.5) node{$a_{2}$};
\draw (-1,5.2) node{$a_{n+3}$};
\draw (-3.2,4.5) node{$a_{4}$};
\draw (-3.8,3.8) node{$a_{3}$};

\foreach \x in {1,3,4,5,6,7}
	\draw[fill=black] (-\x,3.5) circle (0.05);
\foreach \y in {0,0.25,0.5,0.75,1}
	\draw[xshift=-2.5cm, yshift=3.5cm,fill=black] (\y,0) circle (0.01);
\foreach \x in{0,0.25,0.5,0.75,1}
	\draw[xshift=-2.75cm, yshift=3.75cm,fill=black] (\x,\x*\x) circle (0.01);
	

\end{tikzpicture}
\caption{The train track $(\tau_1,\varepsilon_1)$.}
\label{fig10}
\end{figure}
We easily obtain:
$$
\begin{array}{l}
M(T_1) = \mathrm{Id}_{\mathbb{A}_{\mathrm{real}}} + E_{a_{1}a_{2}}, \\
M(T_i) = \mathrm{Id}_{\mathbb{A}_{\mathrm{real}}} + E_{a_{n+5-i}a_{1}} \textrm{ for } i=2,\dots,n+1,\\
M(T_{n+2}) = \mathrm{Id}_{\mathbb{A}_{\mathrm{real}}}
\end{array}
$$
where $E_{\alpha\beta}$ is a matrix having zeros in all entries except at position $(\alpha,\beta)$ where the entry is $1$.
We also draw (in the ordered basis $(a_1,a_2,\dots,a_{n+3})$):
$$
M(R)  =\left(
 \begin{array}{ccc|cccc}
1&0&0&0&\cdots&\cdots&0\\
0&0&0&0&\cdots&\cdots&1\\
0&1&0 &0&\cdots&\cdots & 0\\ \hline
0&0&1 & 0 & \cdots &\cdots & 0  \\
\vdots & \vdots & 0 & 1 &\cdots &\cdots & 0 \\
\vdots           &   \vdots         & \vdots & 0 & \ddots & 0 & 0\\
0 & 0 & 0 & 0 & \cdots  & 1 & 0 \\
\end{array}\right)_{(a_1,a_2,\dots,a_{n+3})}.
$$
Therefore the incidence matrix $M(T)$ of the train track map $T$ is $M(T_{1})M(T_{2})\dots M(T_{n+1})M(T_{n+2})M(R)$, 
whose characteristic polynomial is:
$$
P(X)=X^{n+3}-X^{n+2}-\ldots-X+1.
$$
We now calculate the Teichm\"uller polynomial of the fibered face $F$ containing the fibration defined by the suspension of $f_{\beta_{n}}$. 
Since the braid permutes all the strands cyclically, $\pi:\widetilde{\mathbf{D}_{n}}\to\mathbf{D}_{n}$ is a $\Z$-covering and we fix a labeling by $t\in\Z$ of the set of leaves forming $\widetilde{\mathbf{D}_{n}}$ that is coherent with the action of ${\rm Deck}(\pi)$.

One needs to compute the vectors $w_i=\eta_i(v_i)$ for $i=1,\dots,n+1$. The first case is similar to the situation discussed
in other examples: $w_1 = v_1 = (1,B,1,\dots,1)$. Hence $t(w_1) = (1,t,1,\dots,1)$. \medskip

For the map $T_2$ one has $f=a_{n+3}$ and $f'=a_1$. On the other hand $f'\in N(T_2)$ thus we are in case $2$ 
hence $v_2 = (X^{-1},\dots,X^{-1},1)$. For the others vectors, for each $i=3,\dots,n+1$, we have $f=a_{n+5-i}$ and $f'=a_1$.
Since $N(T_i)=\emptyset$, $f'\not \in N(T_i)$ and we are in case $1$. Hence $(v_i)_{a_{n+5-i}} = X$ and 
$(v_i)_{\alpha}=1$ otherwise. Finally for $T_{n+2}$, one has $(v_{n+2})_\alpha = X^{-1}$ for every $\alpha$.
In conclusion, a straightforward computation shows:
$$
\left\{\begin{array}{l}
t(w_1) = (1,t,1,\dots,1), \\
t(w_2) = (t^{-1},\dots,t^{-1},1),\\
t(w_i) = (1,\dots,1,t,1,\dots,1), \textrm{ for } i=3,\dots,n+1,  \textrm{ and} \\
t(w_{n+2}) = (t^{-1},\dots,t^{-1}).
\end{array}
\right.
$$
where the entry $t$ in $t(w_i)$ occurs at position $n+5-i$.

We can apply Equation~\eqref{E:DetFormulaTh} to obtain $\Theta_{F}(t,u)=\det(u\cdot \Id-M)$
where $M = M(T_{1})D_1 \cdots M(T_{n+2})D_{n+2} M(R)$ where $D_i= {\rm Diag}(t(w_i))$.
Therefore we obtain
$$
M=\left(
 \begin{array}{ccc|cccc}
t^{-2}&0&0&0&\cdots&\cdots&t^{-1}\\
0&0&0&0&\cdots&\cdots&t^{-1}\\
0&t^{-2}&0 &0&\cdots&\cdots & 0\\ \hline
t^{-2}&0&t^{-1} & 0 & \cdots &\cdots & 0  \\
\vdots & \vdots & 0 & t^{-1} &\cdots &\cdots & 0 \\
\vdots           &   \vdots         & \vdots & 0 & \ddots & 0 & 0\\
t^{-2} & 0 & 0 & 0 & \cdots  & t^{-1} & 0 \\
\end{array}\right)
$$
and its associated characteristic polynomial, that is the Teichm\"uller polynomial of $f_{\beta}$:
$$
\Theta_{F}(t,u)=u^{n+3}-t^{-2}u^{n+2}-t^{-3}u^{n+1}-\ldots-t^{-(n+3)}u+t^{-(n+5)}.
$$

\begin{remark}
In figure \ref{fig11} we depict lifts of the maps $f_{\sigma_{1}}$ and $f_{\delta_{n}}$, that we denote by $\widetilde{f_{\sigma_{1}}}$, 
$\widetilde{f_{\delta_{n}}}$ respectively.
\end{remark}
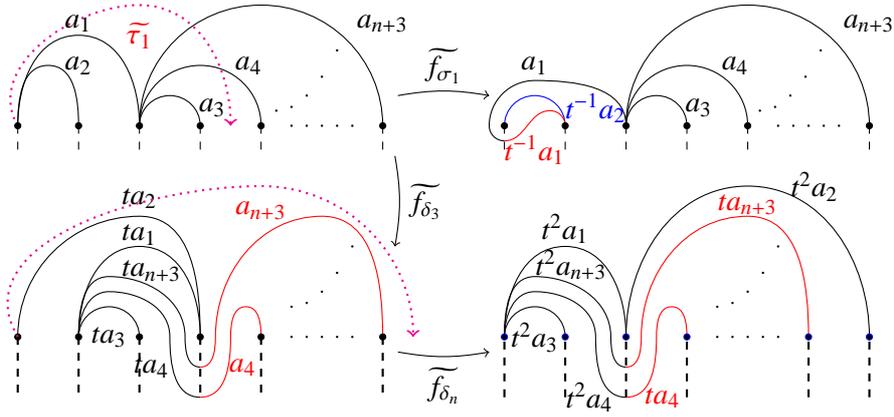
\begin{figure}[here]
\begin{tikzpicture}[scale=0.8]


\draw[thick,magenta,dotted,smooth,->] (-7,3.5) to [out=120,in=180] (-5,5.5) to[out=0,in=90] (-3.5,3.5);

\draw[red] (-5,5) node{$\widetilde{\tau_{1}}$};

\draw[smooth,xshift=1cm,yshift=3.5cm] (-8,0) to[out=90,in=180] (-7.5,1) to[out=0,in=90] (-7,0);
\draw[smooth,yshift=3.5cm,xshift=1cm] (-8,0) to[out=90,in=180] (-7,1.5) to[out=0,in=90] (-6,0);
\draw[smooth,yshift=3.5cm,xshift=1cm] (-6,0) to[out=90,in=180] (-5.5,0.5) to[out=0,in=90] (-5,0);
\draw[smooth,yshift=3.5cm,xshift=1cm] (-6,0) to[out=90,in=180] (-5,1) to[out=0,in=90] (-4,0);
\draw[smooth,yshift=3.5cm,xshift=1cm] (-6,0) to[out=90,in=180] (-4,2) to[out=0,in=90] (-2,0);


\draw (-6,5.2) node{$a_{1}$};
\draw (-6,4.5) node{$a_{2}$};
\draw (-1,5.2) node{$a_{n+3}$};
\draw (-3.2,4.5) node{$a_{4}$};
\draw (-3.8,3.8) node{$a_{3}$};

\foreach \x in {1,3,4,5,6,7}
	\draw[fill=black] (-\x,3.5) circle (0.05);
\foreach \y in {0,0.25,0.5,0.75,1}
	\draw[xshift=-2.5cm, yshift=3.5cm,fill=black] (\y,0) circle (0.01);
\foreach \x in{0,0.25,0.5,0.75,1}
	\draw[xshift=-2.75cm, yshift=3.75cm,fill=black] (\x,\x*\x) circle (0.01);
	
	\foreach \x in {1,3,4,5,6,7}
	\draw[dashed] (-\x,3.5) to (-\x,3);
	
\draw[smooth,->] (-.75,4)  to[out=10,in=170] (.75,4);
\draw (0,4.5) node {$\widetilde{f_{\sigma_{1}}}$};	
\draw[smooth,->] (-.8,3)  to[out=280,in=80] (-.8,1.5);\draw (-0.3,2.25) node {$\widetilde{f_{\delta_3}}$};		
	
\draw[blue,smooth,xshift=9cm,yshift=3.5cm] (-8,0) to[out=90,in=180] (-7.5,.5) to[out=0,in=90] (-7,0);
\draw[smooth,xshift=-1cm,yshift=3.5cm] (4,0) to[out=90,in=0] (2.5,.75) to[out=180,in=90] (1.75,0) to[out=270,in=180] (2,-0.25);
\draw[red,smooth,xshift=-1cm,yshift=3.5cm]
(2,-0.25) to[out=0,in=180] (2.75,.25) to[out=0,in=90] (3,0); 

\draw[smooth, xshift=9cm,yshift=3.5cm] (-6,0) to[out=90,in=180] (-5.5,0.5) to[out=0,in=90] (-5,0);
\draw[smooth,xshift=9cm,yshift=3.5cm] (-6,0) to[out=90,in=180] (-5,1) to[out=0,in=90] (-4,0);
\draw[smooth,xshift=9cm,yshift=3.5cm] (-6,0) to[out=90,in=180] (-4,2) to[out=0,in=90] (-2,0);

\draw [blue,xshift=8cm,yshift=-2.4cm](-5.5,6.2) node{$t^{-1}a_{2}$}; 
\draw [red,xshift=8cm,yshift=-2.4cm](-6.5,5.5) node{$t^{-1}a_{1}$};
\draw[xshift=7.5cm] (-6,4.5) node{$a_{1}$};
\draw [xshift=8cm](-1,5.2) node{$a_{n+3}$};
\draw[xshift=8cm] (-3.2,4.5) node{$a_{4}$};
\draw [xshift=8cm](-3.8,3.8) node{$a_{3}$};

\foreach \x in {1,3,4,5,6,7}
	\draw[xshift=8cm,fill=black] (-\x,3.5) circle (0.05);
\foreach \y in {0,0.25,0.5,0.75,1}
	\draw[xshift=5.5cm, yshift=3.5cm,fill=black] (\y,0) circle (0.01);
\foreach \x in{0,0.25,0.5,0.75,1}
	\draw[xshift=5.2cm, yshift=3.75cm,fill=black] (\x,\x*\x) circle (0.01);
	\foreach \x in {1,3,4,5,6,7}
	\draw[xshift=8cm,dashed] (-\x,3.5) to (-\x,3);



\draw[smooth] (-7,0) to[out=90,in=180] (-5,2) to[out=0,in=90] (-4,0);
\draw[smooth] (-6,0) to[out=90,in=180] (-5,1.5) to[out=0,in=90] (-4,0);
\draw[smooth] (-6,0) to[out=90,in=180] (-5.5,.5) to[out=0,in=90] (-5,0);

\draw[smooth] (-6,0) to[out=90,in=180] (-5.5,.75) to[out=0,in=90]  (-4.5,0) to[out=270,in=180]  (-4,-1); 
\draw[red, smooth] (-4,-1) to[out=0,in=270]  (-3.5,0) to[out=90,in=180]  (-3.25,.5) to[out=0,in=90]  (-3,0);


\draw[smooth] (-6,0) to[out=90,in=180] (-5.5,1) to[out=0,in=90]  (-4.25,0) to[out=270,in=180]  (-4,-.5);
\draw[red,smooth] (-4,-.5) to[out=0,in=270]  (-3.75,0) to[out=90,in=180]  (-2,2) to[out=0,in=90]  (-1,0);
\foreach \x in {1,3,4,5,6,7}
\draw[fill=black] (-\x,0) circle (0.05);

	\foreach \x in {1,3,4,5,6,7}
	\draw[yshift=-4cm,dashed,thick] (-\x,4) to (-\x,3);

\foreach \y in {0,0.25,0.5,0.75,1}
	\draw[xshift=-2.5cm,fill=black] (\y,0) circle (0.01);
\foreach \x in{0,0.25,0.5,0.75,1}
	\draw[xshift=-2.5cm,yshift=.5cm,fill=black] (\x,\x*\x) circle (0.01);


\draw (-5,2.3) node {$ta_{2}$};
\draw (-5,1.7) node {$ta_{1}$};
\draw (-5.5,0) node {$ta_{3}$};
\draw (-4.8,-.5) node {$ta_{4}$};
\draw (-4.8,1.1) node {$ta_{n+3}$};
\draw[red] (-3.3,-.5) node {$a_{4}$};
\draw[red] (-3,2.1) node {$a_{n+3}$};


\draw[thick,dotted, smooth,magenta,->] (-7,0) to[out=120,in=180] (-3,2.5) to[out=0,in=90] (-.5,0);


\draw[smooth,->] (-.75,-.25)  to[out=350,in=190] (.75,-.25);
\draw (0,-.8) node {$\widetilde{f_{\delta_{n}}}$};



\draw[smooth] (3,0) to[out=90,in=180] (5,2.5) to[out=0,in=90] (7,0);

\draw[xshift=7cm,smooth] (-6,0) to[out=90,in=180] (-5,1.5) to[out=0,in=90] (-4,0);
\draw[xshift=7cm,smooth] (-6,0) to[out=90,in=180] (-5.5,.5) to[out=0,in=90] (-5,0);

\draw[xshift=7cm,smooth] (-6,0) to[out=90,in=180] (-5.5,.75) to[out=0,in=90]  (-4.5,0) to[out=270,in=180]  (-4,-1); 
\draw[xshift=7cm,red, smooth] (-4,-1) to[out=0,in=270]  (-3.5,0) to[out=90,in=180]  (-3.25,.5) to[out=0,in=90]  (-3,0);


\draw[xshift=7cm,smooth] (-6,0) to[out=90,in=180] (-5.5,1) to[out=0,in=90]  (-4.25,0) to[out=270,in=180]  (-4,-.5);
\draw[xshift=7cm,red,smooth] (-4,-.5) to[out=0,in=270]  (-3.75,0) to[out=90,in=180]  (-2,2) to[out=0,in=90]  (-1,0);



\draw (6.1,2.5) node {$t^{2}a_{2}$};
\draw[xshift=7cm] (-5,1.8) node {$t^{2}a_{1}$};
\draw[xshift=7cm] (-5.5,0) node {$t^{2}a_{3}$};
\draw (2.4,-1) node {$t^{2}a_{4}$};
\draw[xshift=7cm] (-4.9,1.2) node {$t^{2}a_{n+3}$};
\draw[red] (3.6,-1) node {$ta_{4}$};
\draw[xshift=7cm, red] (-2,2.2) node {$ta_{n+3}$};

\foreach \x in {1,2,3,4,6,7}
\draw[blue, fill=black] (\x,0) circle (0.05);

	\foreach \x in {1,2,3,4,6,7}
	\draw[dashed,thick] (\x,-1) to (\x,0);

\foreach \y in {0,0.25,0.5,0.75,1}
	\draw[xshift=4.5cm,fill=black] (\y,0) circle (0.01);
\foreach \x in{0,0.25,0.5,0.75,1}
	\draw[xshift=4.5cm,yshift=.5cm,fill=black] (\x,\x*\x) circle (0.01);
\end{tikzpicture}
\caption{The lifts $\widetilde{f_{\sigma_{1}}}$, $\widetilde{f_{\delta_{3}}}$ and $\widetilde{f_{\delta_{n}}}$. Dotted lines indicate the action of $\widetilde{f_{\delta_{3}}}$ and $\widetilde{f_{\delta_{n}}}$. }
\label{fig11}
\end{figure}

\section{Computing the Thurston norm} 	
	
The Thurston norm of a link complement can be computed directly in some simple examples (see for example \cite{Thur}). Our calculations will make use of the \emph{Alexander norm}. The definition of this norm makes use of the \emph{Alexander polynomial}.
As the Teichm\"uller polynomial, the \emph{Alexander polynomial} $\Delta_{M}=\sum_{g\in G} b_{g}\cdot g $ of $M$
is an element of the group ring $\Z[G]$, where $G=H_{1}(M,\Z)/\Tor$. The \emph{Alexander norm} 
is defined on $H^{1}(M;\R)$ by
\begin{equation}
   \label{E:AN}
   ||\alpha||_{A}:=\sup_{b_{g}\neq 0\neq b_{h}}\alpha(g-h)
\end{equation}
The next two theorems explain how the Alexander norm and Thurston norm are related.
\begin{theorem}[\cite{McA}]
\label{T:TvsA}
Let $M$ be a compact, orientable 3-manifold whose boundary, if any, is a union of tori. If $b_{1}(M)\geq 2$
then for all $[\alpha]\in H^{1}(M,\Z)$:
$$
||\alpha||_{A}\leq ||\alpha||_{T}.
$$
	Moreover, equality holds when $\alpha:\pi_{1}(M)\to\Z$ is represented by a fibration $\fib$, where $\Sigma$ has non-positive Euler characteristic. 
\end{theorem}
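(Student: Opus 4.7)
The plan is to establish the inequality $||\alpha||_A \leq ||\alpha||_T$ by constructing an explicit presentation of the Alexander module whose Newton polytope is controlled by a norm-minimizing Seifert surface, and then to obtain the equality in the fibered case by identifying the Alexander polynomial with the characteristic polynomial of the monodromy action on $H_1(\Sigma)$.

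First I would reduce to primitive integral classes: by homogeneity and continuity, it suffices to check the inequality for primitive $[\alpha]\in H^1(M,\Z)$. Fix such an $\alpha$ and choose a properly embedded oriented surface $S$ dual to $\alpha$ which realizes $||\alpha||_T = \chi_-(S)$ and has no sphere or disk components. Next I would choose a handle decomposition of $M$ in which $S$ appears as a union of co-cores, and pull it back to the maximal torsion-free abelian cover $p:\widetilde{M}\to M$. The cellular chain complex $C_*(\widetilde{M})$ is then a complex of free $\Z[G]$-modules, and the Alexander module $H_1(\widetilde{M}, p^{-1}(\ast))$ is presented by the boundary map $C_2(\widetilde{M})\to C_1(\widetilde{M})$ up to the standard correction coming from the base fiber.

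The geometric point is that, with this handle decomposition, the entries of the boundary matrix are supported on group elements whose $\alpha$-valuations differ by at most the number of intersections of $1$-cells with $S$, which in turn is bounded in terms of $-\chi(S)$. Hence, after taking a maximal square submatrix whose determinant is a nonzero multiple of $\Delta_M$, its Newton polytope has width at most $-\chi(S)$ in the $\alpha$-direction. Plugging this into the definition~\eqref{E:AN} of the Alexander norm yields $||\alpha||_A \leq \chi_-(S) = ||\alpha||_T$.

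For the equality, assume $\alpha$ is represented by a fibration $\Sigma\to M\to S^1$ with monodromy $\psi$ and $\chi(\Sigma)\leq 0$. The infinite cyclic cover $M_\alpha$ associated to $\ker(\alpha)\subset\pi_1(M)$ is homotopy equivalent to $\Sigma$, so $H_1(M_\alpha,\Z)$ is identified with $H_1(\Sigma,\Z)$ as a $\Z[t^{\pm 1}]$-module where $t$ acts via $\psi_*$. The one-variable specialization $\Delta_M(\alpha)$ then agrees, up to the factor $(t-1)$ coming from the base point and the boundary contributions from the tori $\partial M$, with $\det(tI-\psi_*)$; a direct count of degrees identifies its span with $2g+b-2 = \chi_-(\Sigma)$, giving the equality. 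The main obstacle is technical bookkeeping: one must arrange the handle decomposition precisely enough that the square submatrix actually captures $\Delta_M$ (not merely a proper divisor) and that the Newton-polytope bound is sharp; this is where the hypothesis $b_1(M)\geq 2$ and careful handling of torsion in $H_1(M,\Z)$ are used.
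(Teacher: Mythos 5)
The paper states this theorem and cites McMullen~\cite{McA} without reproducing a proof, so there is no internal argument to compare against; the comparison must be with McMullen's own proof. Your overall plan --- control the Newton polytope of $\Delta_M$ through a chain complex of the maximal free abelian cover, using a norm-minimizing surface $S$ dual to $\alpha$ --- is a reasonable strategy and close in spirit to that of~\cite{McA}. However, the step you describe as ``the geometric point'' is not a point you may assume; it is essentially the entire content of the theorem, and your proposal offers no argument for it. There is no general-position or handle-decomposition fact implying that the spread of $\alpha$-valuations across the entries of the boundary matrix is bounded by $-\chi(S)$: the number of times an attaching curve of a $2$-handle meets $S$ depends on the chosen decomposition, not on $\chi(S)$, and can be made arbitrarily large by subdivision, while there is no theorem bounding the minimal complexity of a decomposition adapted to $S$ in terms of $\chi(S)$ alone. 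The phrase ``$S$ appears as a union of co-cores'' is also not well-posed: co-cores of $2$-handles in a $3$-manifold are arcs, not surfaces. What one actually controls is $\dim_\Q H_1(M_\alpha;\Q)$ for the infinite cyclic cover $M_\alpha$ associated to a primitive class; McMullen first reduces the Alexander-norm bound to a bound on this dimension (and this reduction is where the $b_1(M)\ge 2$ hypothesis removes the extra $1+b_3(M)$ correction present when $b_1=1$), and then bounds the dimension by $\chi_-(S)$ via a cut-and-paste (Mayer--Vietoris / Wang-sequence) computation for $M$ split open along $S$. One must also justify the passage from the $\alpha$-width of the Newton polytope of $\Delta_M$ to the degree span of the one-variable specialization $\Delta_M(\alpha)$, which may drop through cancellation when $\alpha$ lies on a proper face of the dual polytope; your outline elides this.

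Your treatment of the fibered equality is essentially correct in outline: with $M_\alpha\simeq\Sigma$, the specialization of $\Delta_M$ agrees, up to a $(t-1)$ factor, with $\det(tI-\psi_*)$ on $H_1(\Sigma;\Z)$, whose degree span is $b_1(\Sigma)-1=\chi_-(\Sigma)$, so $||\alpha||_A\ge\chi_-(\Sigma)=||\alpha||_T$, and combined with the inequality one gets equality. But that conclusion uses the inequality from the first part, which is exactly where your argument has the gap; declaring the missing step to be ``technical bookkeeping'' does not close it.
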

\begin{theorem}[\cite{Mc}]
	\label{T:FA}
Let $F$ be a fibered face in $H^{1}(M,\R)$ with $b_{1}(M)\geq 2$. Then we have:
\begin{enumerate}
\item  $F\subset A$ for a unique face $A$ of the Alexander unit norm ball, and
\item  $F=A$ and $\Delta_{M}$ divides $\Theta_{F}$ if the lamination $\mathcal{L}$ associated to $F$ is transversally orientable.
\end{enumerate}
\end{theorem}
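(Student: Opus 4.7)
The plan is to prove the two assertions separately, exploiting the standard duality between a Laurent polynomial's Newton polytope and the norm it defines.

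For part (1), I would first recall that formula~\eqref{E:AN} identifies $\|\cdot\|_A$ with (twice) the support function of the Newton polytope $N(\Delta_M)\subset H_1(M,\R)/\Tor\otimes \R$; consequently $\|\cdot\|_A$ is piecewise linear, and each maximal open cone of linearity is the dual cone of some vertex of $N(\Delta_M)$, or equivalently is the cone over a face of the Alexander unit ball. By Theorem~\ref{T:TvsA}, for every integer class $[\alpha]$ in the open fibered cone $\R^+\cdot F$ one has $\|\alpha\|_A=\|\alpha\|_T$. Since $\|\cdot\|_T$ is linear on $\R^+\cdot F$ (this cone lies over a face of the Thurston unit ball), continuity and density of integer points force $\|\cdot\|_A$ to be linear on all of $\R^+\cdot F$. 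Therefore $\R^+\cdot F$ is contained in a single maximal linearity cone of $\|\cdot\|_A$, which corresponds to a unique top-dimensional face $A$ of the Alexander unit ball; this proves $F\subset A$.

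For part (2), assume now that the lamination $\mathcal{L}$ associated to $F$ is transversally orientable. I would first establish the divisibility $\Delta_M\mid \Theta_F$, and then deduce $F=A$ from it. Recall from \S\ref{SS:TPF} that $\Theta_F$ generates the Fitting ideal of the $\Z[G]$-module $T(\widetilde{\mathcal{L}})$ of transversals to $\widetilde{\mathcal L}$, while $\Delta_M$ generates the order ideal of $H_1(\widetilde M;\Z)$ (viewed as a $\Z[G]$-module, the Alexander module). Under transverse orientability, one may integrate signed measures on transversals, so that $T(\widetilde{\mathcal L})$ becomes a genuine $\Z[G]$-module (and not merely a cone), and there is a natural $\Z[G]$-equivariant map
$$
\mu : T(\widetilde{\mathcal L})\longrightarrow H_1(\widetilde M;\Z)
$$
sending an oriented transversal arc $\tau$ to its relative class, computed via the Poincaré dual of the transverse cocycle it carries. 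The map $\mu$ is surjective (at least after tensoring with $\Q$) because any $1$-cycle on $\widetilde M$ can be pushed off $\widetilde{\mathcal L}$ and resolved as a combination of transversal arcs. A short exact sequence of $\Z[G]$-modules $0 \to \ker(\mu) \to T(\widetilde{\mathcal L}) \to H_1(\widetilde M;\Z)\to 0$ implies, through the multiplicative behavior of Fitting ideals, that $\Delta_M$ divides $\Theta_F$ in $\Z[G]$.

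Once the divisibility is known, part (2b) follows from the polytope calculus used in part (1). Indeed, $N(\Theta_F)=N(\Delta_M)+N(\Theta_F/\Delta_M)$ (Minkowski sum), so the support function of $N(\Theta_F)$ dominates that of $N(\Delta_M)$, with equality on any linear face of $N(\Theta_F)$ dual to which a vertex of $N(\Delta_M)$ already saturates the inequality. On the fibered face we have $\|\alpha\|_T = \|\alpha\|_A$, and the linearity domain of $\|\cdot\|_T$ above $F$ is precisely $\R^+\cdot F$; by the Minkowski decomposition this is also a linearity domain of $\|\cdot\|_A$, and the two agree throughout a full face $A$ of the Alexander ball. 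Since they agree on a top-dimensional open set of $A$ and both are linear there, they agree on all of $A$, forcing $F=A$.

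The main obstacle is the construction and injectivity analysis of the comparison map $\mu: T(\widetilde{\mathcal L})\to H_1(\widetilde M;\Z)$ used for part (2a). Transverse orientability is essential here: without it, signs on transversals are only defined up to a $\Z/2$ ambiguity, so $T(\widetilde{\mathcal L})$ is only a cone and one cannot form short exact sequences of $\Z[G]$-modules as above. Making $\mu$ precise requires a careful local model near the singularities of $\mathcal L$ (where the orientation propagates across $k$-pronged singularities) and a compatibility check with the $G=H_1(M,\Z)/\Tor$-action induced by deck transformations of $\widetilde M\to M$, and the behavior of Fitting ideals along the resulting exact sequence of finitely presented modules must be tracked carefully to yield the strict divisibility $\Delta_M\mid \Theta_F$ rather than equality only after inverting some unit.
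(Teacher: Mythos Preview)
The paper does not prove this theorem; it is quoted from McMullen~\cite{Mc} and used as a black box. So there is no ``paper's proof'' to compare against, and your proposal must stand on its own.

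Part~(1) is fine: equality of $\|\cdot\|_A$ and $\|\cdot\|_T$ on integral fibered classes together with linearity of $\|\cdot\|_T$ on $\R^+\cdot F$ does force $\|\cdot\|_A$ to be linear there, hence $\R^+\cdot F$ sits in a single dual cone of $N(\Delta_M)$.

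Part~(2) has a genuine gap, and it is exactly where you yourself flag the ``main obstacle''. Your map $\mu:T(\widetilde{\mathcal L})\to H_1(\widetilde M;\Z)$ is not actually defined. A transversal to $\widetilde{\mathcal L}$ is an arc, not a cycle; saying it goes to ``its relative class, computed via the Poincar\'e dual of the transverse cocycle it carries'' does not specify an element of $H_1(\widetilde M;\Z)$. Worse, your surjectivity argument is backwards: pushing a $1$-cycle \emph{off} $\widetilde{\mathcal L}$ makes it disjoint from the lamination, which is the opposite of expressing it through transversal arcs. What McMullen actually uses is that transverse orientability lets one orient the edges of an invariant train track $\tau$, so that a weight system on $\tau$ becomes an honest $1$-cycle on $\Sigma$; this produces a $\Z[G]$-module surjection from (a model of) the transversal module onto the Alexander module, and the divisibility then follows from multiplicativity of orders in a short exact sequence. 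Your sketch gestures at this but does not build the map.

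Your deduction of $F=A$ from $\Delta_M\mid\Theta_F$ is also incomplete. The Minkowski decomposition $N(\Theta_F)=N(\Delta_M)+N(\Theta_F/\Delta_M)$ tells you that each maximal linearity cone of $\|\cdot\|_{\Theta_F}$ is an \emph{intersection} of a linearity cone of $\|\cdot\|_A$ with one of $\|\cdot\|_{\Theta_F/\Delta_M}$; it does not say that $\R^+\cdot F$ is itself a full linearity cone of $\|\cdot\|_A$. You need the extra input (McMullen's Theorem~\ref{T:FFTN} here) that $\R^+\cdot F$ is \emph{exactly} a cone over a face of the Teichm\"uller ball, together with the observation that on this cone $\|\cdot\|_{\Theta_F}=\|\cdot\|_T=\|\cdot\|_A$, so the seminorm coming from $\Theta_F/\Delta_M$ vanishes identically there; one then has to argue that this forces the corresponding vertex of $N(\Theta_F/\Delta_M)$ to have dual cone containing all of $\R^+\cdot A$. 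That step is missing from your sketch.
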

In particular,  \emph{the Thurston and Alexander norms agree on integer classes in the cone over a fibered face of the Thurston norm ball}. The condition ``the lamination $\mathcal{L}$ associated to $F$ is transversally orientable'' is equivalent the following condition: there exist a fibration $\fib$ whose pseudo-Anosov monodromy fixes a projective measured lamination $[(l,\mu)]\in\mathbb{P}\mathcal{ML}(\Sigma)$ which is transversally orientable. Equivalently, this last condition is equivalent to the orientability of a train track $\tau$ carrying $l$. From these theorems we can deduce the following simple fact: if $b_{1}(M)=2$ and all faces of $B_{T}$ are fibered, then the Thurston and Alexander norms coincide. The effective calculation of the Alexander norm is possible thanks to the 
following obvious remark:
\begin{remark}\label{R:pdual}
Since the Alexander polynomial of a $3$-manifold is symmetric the Alexander norm ball is dual to 
the scale by of factor of $2$ of the Newton polytope of the Alexander polynomial.
\end{remark}

For the sake of completeness we end this section discussing the \emph{Teichm\"uller norm} and how it can also be used to calculate the Thurston norm. Fix a fibered face $F\subset H^{1}(M,\R)$ and let $\Theta_{F}=\sum_{g\in G} a_{g}\cdot g$ be the corresponding Teichm\"uller polynomial. The Teichm\"uller norm (relative to $F$) is defined by:
\begin{equation}
   \label{E:TN}
   ||\alpha||_{\Theta_{F}}:=\sup_{a_{g}\neq 0\neq a_{h}}\alpha(g-h)
\end{equation}
Compare with $\ref{E:AN}$. The unit ball $B_{\Theta_{F}}$ of  the Teichm\"uller norm is dual to the Newton polytope $N(\Theta_{F})$ of the Teichm\"uller polynomial \cite{Mc}. Moreover,
\begin{theorem}[\cite{Mc}]
	\label{T:FFTN}
	For any fibered face $F$ of the Thurston norm ball, there exists a face $D$ of the Teichm\"uller norm ball,
	$$
	D\subset \{[\alpha]\hspace{1mm}|\hspace{1mm} || [\alpha] ||_{\Theta_{F}}=1\}
	$$
	such that $\R^{+}\cdot F = \R^{+}\cdot D$.
\end{theorem}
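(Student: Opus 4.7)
The statement asserts that the positive cone $\R^{+}\cdot F$ over a fibered face of the Thurston ball agrees with the positive cone over some face $D$ of the Teichm\"uller norm unit ball. Since $B_{\Theta_{F}}$ is polar dual to (a scaling of) the Newton polytope $N(\Theta_{F})$, faces of $B_{\Theta_{F}}$ are in bijection with faces of $N(\Theta_{F})$: a face $D$ dual to a face $E$ of $N(\Theta_{F})$ satisfies
\[
\R^{+}\cdot D = \{[\alpha]\in H^{1}(M,\R) : \alpha|_{E} \textrm{ realizes } \max_{g\in\mathrm{supp}(\Theta_{F})} \alpha(g)\}.
\]
So the plan is to exhibit a face $E$ of $N(\Theta_{F})$ whose dual open cone coincides with $\R^{+}\cdot F$, and then set $D$ to be its polar dual.

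The first step produces $E$ explicitly from the product/determinant formula of Theorem~\ref{thm:second:main}:
\[
\Theta_{F}(t_{1},\ldots,t_{r},u) = \det(uI - M), \qquad M = M(T_{1})D_{1}\cdots M(T_{k})D_{k}\cdot M(R).
\]
Expanding the characteristic polynomial yields two distinguished monomials: the leading term $u^{N}$ (where $N$ is the size of $M$), corresponding to a group element $g_{+}\in H$; and the $u^{0}$ contribution $(-1)^{N}\det(M)$, which, because each $D_{i}$ is diagonal and $M(R)$ is a signed permutation matrix, contains a unique monomial $g_{-}\in H$ equal to $\prod_{i}\det(D_{i})$ up to sign. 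For $[\alpha]\in\R^{+}\cdot F$ the fibration condition forces $\alpha(u)>0$, so each monomial of $\det(uI-M)$ has $\alpha$-value of the form $j\,\alpha(u) + (\text{sum of } \alpha(t_{i})\text{'s})$ for some $0\le j\le N$; because the non-unit entries in the matrix product are Laurent polynomials whose Newton polytopes are bounded uniformly, the extremal monomials for any $\alpha\in\R^{+}\cdot F$ are always $g_{+}$ (for $j=N$) and $g_{-}$ (for $j=0$). Consequently $\|\alpha\|_{\Theta_{F}} = \alpha(g_{+})-\alpha(g_{-})$ is a single linear functional on $\R^{+}\cdot F$, so this cone is contained in $\R^{+}\cdot D$ for $D$ the face of $B_{\Theta_{F}}$ dual to the edge $E=[g_{-},g_{+}]$ of $N(\Theta_{F})$.

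For the reverse inclusion $\R^{+}\cdot D \subseteq \R^{+}\cdot F$, I would compare the Teichm\"uller and Thurston norms on $\R^{+}\cdot F$: a Euler--Poincar\'e count applied to the lifted invariant train track $\widetilde{\tau}$ shows that, when $\Theta_{F}$ is specialized at an integral $[\alpha]\in \R^{+}\cdot F$ giving a fibration with fiber $\Sigma_{\alpha}$, the degree span of the resulting Laurent polynomial equals $-\chi(\Sigma_{\alpha})$. By Theorem~\ref{Th:FF} this is $\|\alpha\|_{T}$, so $\|\alpha\|_{\Theta_{F}}=\|\alpha\|_{T}$ on integer points of $\R^{+}\cdot F$, hence by continuity on all of $\R^{+}\cdot F$. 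Since $F$ is a top-dimensional face of $B_{T}$ (Theorem~\ref{Th:TNP}), $\R^{+}\cdot F$ is a maximal region of linearity for $\|\cdot\|_{T}$; the equality of norms upgrades this to a maximal region of linearity of $\|\cdot\|_{\Theta_{F}}$, which is exactly a cone $\R^{+}\cdot D$ over a single face of $B_{\Theta_{F}}$.

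The main obstacle is justifying the identity $\|\alpha\|_{\Theta_{F}}=\|\alpha\|_{T}$ on the integer points of the open cone $\R^{+}\cdot F$. One can either push through the explicit Euler--Poincar\'e/train-track count sketched above, tracking how each real edge and each infinitesimal edge contributes to the span of $\Theta_{F}(\alpha)$, or bypass it using Fried's theorem together with the stretch-factor interpretation: the function $[\alpha]\mapsto\log\lambda(\alpha)$ extends continuously to $\R^{+}\cdot\overline{F}$ and blows up at the boundary of $F$, and the same behavior is forced on $\|\alpha\|_{\Theta_{F}}$ via the bound $\log\lambda(\alpha)\le\|\alpha\|_{\Theta_{F}}\log C$ combined with linearity on the open cone. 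Either route ultimately reduces the proof of the theorem to the concrete structural facts about $\Theta_{F}$ provided by the product formula.
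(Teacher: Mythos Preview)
This theorem is not proved in the paper; it is quoted from McMullen~\cite{Mc} and invoked in the appendix on the Thurston norm only as an input for locating the fibered face inside the Teichm\"uller norm ball. There is therefore no proof in the paper to compare your attempt against.

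As a standalone attempt at McMullen's result, your proposal has the right architecture but two genuine gaps. First, by routing everything through Theorem~\ref{thm:second:main} you restrict yourself to mapping tori of punctured-disc homeomorphisms, whereas the statement concerns an arbitrary fibered face of an arbitrary hyperbolic $3$-manifold; McMullen's argument works directly from a $\psi$-invariant train track on the fiber and the determinant formula~\eqref{E:TPform}, not from the braid-specific product. Second, and more substantively, the reverse inclusion $\R^{+}\cdot D\subseteq\R^{+}\cdot F$ does not follow from your maximality sentence: you only know $\|\alpha\|_{\Theta_{F}}=\|\alpha\|_{T}$ on $\R^{+}\cdot F$, so nothing you have written prevents the Teichm\"uller norm from remaining linear on a strictly larger cone. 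The repair is the one you gesture at in your last paragraph, but it should be phrased differently. On the open cone $\mathcal{C}$ dual to the vertex $u^{N}$ of $N(\Theta_{F})$, the specialization $\Theta_{F}(\alpha)$ is monic with coefficients bounded by the fixed constant $\sum_{g}|a_{g}|$, so its largest real root is \emph{uniformly} bounded on $\mathcal{C}$; since $\lambda(\alpha)\to\infty$ as $\alpha\to\partial(\R^{+}\cdot F)$ by Fried, the boundary of $\R^{+}\cdot F$ cannot meet the interior of $\mathcal{C}$, and connectedness then forces $\R^{+}\cdot F=\mathcal{C}$. Note that this is a constant bound on $\lambda$, not the inequality $\log\lambda(\alpha)\le\|\alpha\|_{\Theta_{F}}\log C$ you wrote, and once you argue this way your first step (singling out the edge $[g_{-},g_{+}]$) is no longer needed.
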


\section{Basic types}
	\label{APPENDIX:CASES}

In figure \ref{fig:ListBasicCases} we present the basic types that remain to complete the proof of theorem \ref{thm:second:main}. To understand the picture it is important to consider:
\begin{enumerate}
\item For each basic type depicted in the figure we omit the basic type obtained by performing a reflection with respect to a vertical line. We have to take this  'reflected' basic types into consideration for the proof. 
\item At most three infinitesimal edges are depicted, nevertheless the types presented can live in any punctured disc.  
\item The little black dot on which  in some basic types the real edges are incident needs to be changed, when constructing a train track from the basic type, by either a vertex or a multigon formed by infinitesimal edges.  
\end{enumerate}

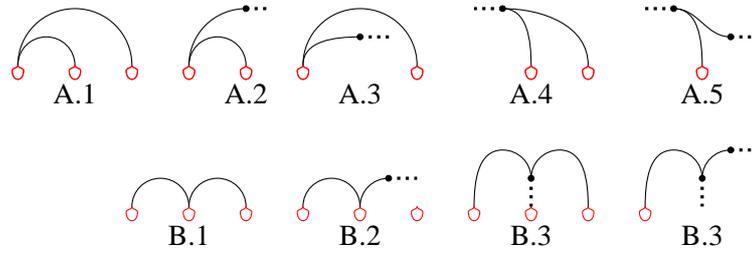
\begin{figure}[htbp]
 \begin{tikzpicture}[scale=0.75, inner sep=1mm]

\begin{scope}[xshift=-2cm]
\foreach \x in {-6,-5,-4,-3,-2,-1,1,3,4,6}
	\draw[smooth, red] (\x,0) to[out=270,in=90] (\x-.1,-.1) to[out=270,in=180] (\x,-.25) to[out=0,in=270] (\x+.1,-.1) to[out=90,in=270] (\x,0);

\foreach \x in {-6,-5,-4,-3,-2,-1,1,3,4,6}
	\draw[smooth, red] (\x,0) to[out=270,in=90] (\x-.1,-.1) to[out=270,in=180] (\x,-.25) to[out=0,in=270] (\x+.1,-.1) to[out=90,in=270] (\x,0);

\draw[smooth] (-6,0) to[out=90,in=180] (-5.5,.5)  to[out=0,in=90] (-5,0) ;
\draw[smooth] (-6,0) to[out=90,in=180] (-5,1)  to[out=0,in=90] (-4,0) ;
\draw(-5,-.5) node{A.1};

\draw[xshift=3cm,smooth] (-6,0) to[out=90,in=180] (-5.5,.5)  to[out=0,in=90] (-5,0) ;
\draw[smooth] (-3,0) to[out=90,in=180] (-2,1);
\draw [fill=black] (-2,1) circle (0.05);
\draw[ very thick, dotted,smooth] (-2,1) to (-1.6,1);
\draw(-2,-.5) node{A.2};
\end{scope}
\begin{scope}[xshift=-4cm]
\draw[smooth] (1,0) to[out=90,in=180] (2,1)  to[out=0,in=90] (3,0) ;
\draw[smooth] (1,0) to[out=90,in=180] (2,.5);
\draw [fill=black] (2,.5) circle (0.05);
\draw[ very thick, dotted,smooth] (2,.5) to (2.6,.5);
\draw(2,-.5) node{A.3};

\draw[smooth] (4.5,1) to[out=0,in=90] (5,0);
\draw[smooth] (4.5,1) to[out=0,in=90] (6,0);
\draw [fill=black] (4.5,1) circle (0.05);
\draw[ very thick, dotted,smooth] (4.5,1) to (4,1);
\draw(5,-.5) node{A.4};
\end{scope}
\begin{scope}[yshift=-2.5cm]
\foreach \x in {-6,-5,-4,-3,-2,-1,0,1,2,3}
	\draw[smooth, red] (\x,0) to[out=270,in=90] (\x-.1,-.1) to[out=270,in=180] (\x,-.25) to[out=0,in=270] (\x+.1,-.1) to[out=90,in=270] (\x,0);
\end{scope}
\begin{scope}[xshift=9cm,yshift=2.5cm]
\draw[smooth](-5.5,-1.5) to[out=0,in=90](-5,-2.5);
\draw[smooth](-5.5,-1.5) to[out=0,in=180](-4.5,-2);
\draw[very thick, dotted] (-6,-1.5) to (-5.5,-1.5);
\draw[very thick, dotted] (-4.5,-2) to (-4,-2);
\draw [fill=black] (-4.5,-2) circle (0.05);
\draw [fill=black] (-5.5,-1.5) circle (0.05);
\draw(-5,-3) node{A.5};
\end{scope}

\begin{scope}[xshift=-3cm]
\draw[smooth] (-3,-2.5) to[out=90,in=180] (-2.5,-2)  to[out=0,in=90] (-2,-2.5) ;
\draw[xshift=1cm,smooth] (-3,-2.5) to[out=90,in=180] (-2.5,-2)  to[out=0,in=90] (-2,-2.5) ;
\draw(-2,-3) node{B.1};
\end{scope}
\begin{scope}[xshift=-4cm]
\draw[smooth] (1,-2.5) to[out=90,in=180] (1.5,-2)  to[out=0,in=90] (2,-2.5) ;
\draw[smooth] (2,-2.5) to[out=90,in=180] (2.5,-2);
\draw [fill=black] (2.5,-2) circle (0.05);
\draw[very thick, dotted] (2.5,-2) to (3,-2);
\draw(2,-3) node{B.2};


\draw[smooth] (4,-2.5) to[out=90,in=180] (4.5,-1.5)
to[out=0,in=90] (5,-2);
\draw[smooth] (5,-2) to [out=90,in=180] (5.5,-1.5)
to [out=0,in=90] (6,-2.5);
\draw [fill=black] (5,-2) circle (0.05);
\draw[very thick, dotted] (5,-2) to (5,-2.5);
\draw(5,-3) node{B.3};
\end{scope}


\draw(3,-2.5) to[out=90,in=180] (3.5,-1.5)to[out=0,in=90] (4,-2);
\draw(4,-2) to[out=90,in=180] (4.5,-1.5);
\draw [fill=black] (4,-2) circle (0.05);
\draw[very thick, dotted] (4,-2) to (4,-2.5);
\draw [fill=black] (4.5,-1.5) circle (0.05);
\draw[very thick, dotted] (4.5,-1.5) to (5,-1.5);
\draw(4,-3) node{B.3};

\end{tikzpicture}
 \caption{
 \label{fig:ListBasicCases}
Basic graphs.}
\end{figure}


\begin{bibdiv}
  \begin{biblist}
 
\bib{BH}{article}{
AUTHOR = {Bestvina, Mladen},
AUTHOR = {Handel, Michael},
     TITLE = {Train-tracks for surface homeomorphisms},
   JOURNAL = {Topology},
    VOLUME = {34},
      YEAR = {1995},
    NUMBER = {1},
     PAGES = {109--140},
} 

\bib{B}{article}{
AUTHOR = {Brinkmann, Peter},
     TITLE = {An implementation of the Bestvina-Handel algorithm for surface homeomorphisms},
   JOURNAL = {Experimental Mathematics},
    VOLUME = {9},
      YEAR = {2000},
    NUMBER = {2},
     PAGES = {235--240},
}

\bib{Cal}{book}{
 AUTHOR = {Calegari, Danny},
     TITLE = {Foliations and the geometry of 3-manifolds},
    SERIES = {Oxford Mathematical Monographs},
 PUBLISHER = {Oxford University Press},
   ADDRESS = {Oxford},
      YEAR = {2007},
     PAGES = {xiv+363},
 }
 
\bib{FM}{book}{
AUTHOR = {Farb, Benson},
AUTHOR = {Margalit, Dan},
     TITLE = {A primer on mapping class groups},
    SERIES = {Princeton Mathematical Series},
    VOLUME = {49},
 PUBLISHER = {Princeton University Press},
   ADDRESS = {Princeton, NJ},
      YEAR = {2012},
     PAGES = {xiv+472},
}

\bib{FLP}{book}{
AUTHOR = {Fathi, Albert },
AUTHOR = {Laudenbach, Fran\c{c}ois },
AUTHOR = {Po\'enaru, Valentin},
     TITLE = {Travaux de Thurston sur les surfaces},
    SERIES = {Ast\'erisque},
    VOLUME = {66-67},
      YEAR = {1979},
}  
 
\bib{F}{article}{
AUTHOR = {Fried, David},
     TITLE = {Flow equivalence, hyperbolic systems and a new zeta function
              for flows},
   JOURNAL = {Comment. Math. Helv.},
    VOLUME = {57},
      YEAR = {1982},
    NUMBER = {2},
     PAGES = {237--259},
}

\bib{H}{article}{
AUTHOR = {Hall, Tobby},
     TITLE = {Trains: a program for computing train tracks of surface homeomorphisms,
     http://www.maths.liv.ac.uk/~tobyhall/software/trainhelp.pdf}
}

\bib{Hillman}{book}{
   author={Hillman, Jonathan},
   title={Algebraic invariants of links},
   series={Series on Knots and Everything},
   volume={52},
   edition={2},
   publisher={World Scientific Publishing Co. Pte. Ltd., Hackensack, NJ},
   date={2012},
   pages={xiv+353},
   }

\bib{Hi-LT}{article}{
AUTHOR = {Hironaka, Eriko},
     TITLE = {Small dilatation pseudo-Anosov mapping classes coming from the simplest hyperbolic braid},
   JOURNAL = {Alg. and Geom. Top.},
    VOLUME = {10},
      YEAR = {2010},
     PAGES = {2041--2060 (electronic)},
}

\bib{Hi}{article}{
AUTHOR = {Hironaka, Eriko}
AUTHOR = {Kin, Eiko},
     TITLE = {A family of pseudo-{A}nosov braids with small dilatation},
   JOURNAL = {Algebr. Geom. Topol.},
    VOLUME = {6},
      YEAR = {2006},
     PAGES = {699--738 (electronic)},
}

\bib{KT}{article}{
    AUTHOR = {Kin, Eiko}
    AUTHOR = {Takasawa, Mitsuhiko},
     TITLE = {Pseudo-{A}nosov braids with small entropy and the magic
              3-manifold},
   JOURNAL = {Comm. Anal. Geom.},
    VOLUME = {19},
      YEAR = {2011},
    NUMBER = {4},
     PAGES = {705--758},
} 

\bib{KT1}{article}{
AUTHOR = {Kin, Eiko}
AUTHOR = {Takasawa, Mitsuhiko},
     TITLE = {Pseudo-{A}nosovs on closed surfaces having small entropy and
              the {W}hitehead sister link exterior},
   JOURNAL = {J. Math. Soc. Japan},
    VOLUME = {65},
      YEAR = {2013},
    NUMBER = {2},
     PAGES = {411--446},
}

\bib{KKT}{article}{
AUTHOR = {Kin, Eiko}
AUTHOR = {Kojima, Sadayoshi}
AUTHOR = {Takasawa, Mitsuhiko},
     TITLE = {Minimal dilatations of pseudo-Anosovs generated by the magic 3-manifold and their asymptotic behavior},
   JOURNAL = {Algebraic and Geometric Topology},
    VOLUME = {13},
      YEAR = {2013},
     PAGES = {3537--3602},
}

\bib{KLS}{article}{
AUTHOR = {Song, Won Taek},
AUTHOR = {Ko, Ki Hyoung},
AUTHOR = { Los, J{\'e}r{\^o}me E.},
     TITLE = {Entropies of braids},
      NOTE = {Knots 2000 Korea, Vol. 2 (Yongpyong)},
   JOURNAL = {J. Knot Theory Ramifications},
    VOLUME = {11},
      YEAR = {2002},
    NUMBER = {4},
     PAGES = {647--666},
} 
 
\bib{Los}{article}{
AUTHOR = {Los, J{\'e}r{\^o}me},
     TITLE = {Infinite sequence of fixed-point free pseudo-{A}nosov
              homeomorphisms},
   JOURNAL = {Ergodic Theory Dynam. Systems},
    VOLUME = {30},
      YEAR = {2010},
    NUMBER = {6},
     PAGES = {1739--1755},
      ISSN = {0143-3857},
}

\bib{McA}{article}{
AUTHOR = {McMullen, Curtis T.},
     TITLE = {The {A}lexander polynomial of a 3-manifold and the {T}hurston
              norm on cohomology},
   JOURNAL = {Ann. Sci. \'Ecole Norm. Sup. (4)},
    VOLUME = {35},
      YEAR = {2002},
    NUMBER = {2},
     PAGES = {153--171},
}

\bib{Mc}{article}{
AUTHOR = {McMullen, Curtis T.},
     TITLE = {Polynomial invariants for fibered 3-manifolds and
              {T}eichm\"uller geodesics for foliations},
   JOURNAL = {Ann. Sci. \'Ecole Norm. Sup. (4)},
    VOLUME = {33},
      YEAR = {2000},
    NUMBER = {4},
     PAGES = {519--560},
}

\bib{milnor}{article}{
AUTHOR = {Milnor, J.},
     TITLE = {Infinite cyclic coverings},
 BOOKTITLE = {Conference on the Topology of Manifolds (Michigan State Univ.)},
     PAGES = {115--133},
      YEAR = {1968},
}

\bib{PaPe}{article}{
AUTHOR = {Papadopoulos, Athanase}
AUTHOR = {Penner, Robert C.},
     TITLE = {A characterization of pseudo-{A}nosov foliations},
   JOURNAL = {Pacific J. Math.},
    VOLUME = {130},
      YEAR = {1987},
    NUMBER = {2},
     PAGES = {359--377},
      ISSN = {0030-8730},
}

\bib{Thur}{article}{
AUTHOR = {Thurston, William P.},
     TITLE = {A norm for the homology of {$3$}-manifolds},
   JOURNAL = {Mem. Amer. Math. Soc.},
    VOLUME = {59},
      YEAR = {1986},
    NUMBER = {339},
     PAGES = {i--vi and 99--130},
}

\end{biblist}
  \end{bibdiv}

\end{document}